\def\a{{\alpha}}
\def\e{{\varepsilon}}
\def\beq{\begin{equation}}
\def\eeq{\end{equation}}
\def\a{\alpha}
\def\e{\epsilon}
\newcommand{\cC}{{\mathcal C}}
\newcommand{\cF}{{\mathcal F}}
\newcommand{\cP}{{\mathcal P}}
\newcommand{\cW}{{\mathcal W}}
\newcommand{\cWcu}{{\mathcal W}^{cu}}
\newcommand{\cWcs}{{\mathcal W}^{cs}}
\newcommand{\cWc}{{\mathcal W}^{c}}
\newcommand{\cWu}{{\mathcal W}^{u}}
\newcommand{\cWs}{{\mathcal W}^{s}}
\newcommand{\cS}{{\mathcal S}}
\newcommand{\cU}{{\mathcal U}}
\newcommand{\cJ}{{\mathcal J}}
\newcommand{\wt}{\widetilde}
\newcommand{\Z}{{\mathbb Z}}
\newcommand{\R}{{\mathbb R}}
\newcommand{\D}{{\mathbb D}}
\newcommand{\T}{{\mathbb T}}
\newcommand{\N}{{\mathbb N}}
\newcommand{\fc}{{\wt{f}_c}}
\newcommand{\xc}{{[\wt{x}]}}
\newcommand{\yc}{{[\wt{y}]}}
\newcommand{\zc}{{[\wt{z}]}}
\newtheorem{thm}{Theorem}[section]
\newtheorem{lem}[thm]{Lemma}
\newtheorem{prop}[thm]{Proposition}
\newtheorem{cor}[thm]{Corollary}
\newtheorem{claim}[thm]{Claim}
\theoremstyle{definition}
\newtheorem{df}[thm]{Definition}
\newtheorem{rem}[thm]{Remark}
\theoremstyle{plain}
\newtheorem*{main thm}{Main Theorem}
\newtheorem*{main thm bis}{Main Theorem (alternate version)}
\newtheorem{theoalph}{Theorem}
\newcommand{\subscript}[2]{$#1 _ #2$}
\providecommand{\abs}[1]{\lvert#1\rvert}
\providecommand{\norm}[1]{\lVert#1\rVert}
\begin{document}
\numberwithin{equation}{section}

\title[Coherence of F.L.P.H.]{Dynamical coherence in isotopy classes of fibered lifted partially hyperbolic diffeomorphisms}
\author[Luis Pedro Pi\~{n}eyr\'ua]{Luis Pedro Pi\~{n}eyr\'ua$^{*}$}
\thanks{$^{*}$L.P.P was partially supported by CAP's doctoral scholarship and CSIC group 618}

\author[Mart\'in Sambarino]{Mart\'in Sambarino$^{**}$}
\thanks{$^{**}$M.S. was supported by CSIC group 618}

\subjclass[2010]{Primary: 37D30. Secondary: 37C15}
\keywords{Partial hyperbolicity, dynamical coherence, global stability} 

\maketitle

\begin{abstract}
	We introduce the notion of \textit{fibered lifted partially hyperbolic diffeomorphisms} and we prove that any partially hyperbolic diffeomorphism isotopic to a fibered lifted one where the isotopy take place inside partially hyperbolic systems is \textit{dynamically coherent.} Moreover we prove some global stability result: every two partially hyperbolic diffeomorphisms in the same connected component of a fibered lifted partially hyperbolic diffeomorphisms, are leaf conjugate. 
\end{abstract}

\tableofcontents

\section{Introduction}

One of the major problems in dynamical systems is to determine whether two systems are equivalent or not from some point of view. In the topological sense this is done by the conjugacy problem: we say that two diffeomorphisms $f:M \to M$ and $g:M\to M$ are \textit{conjugate} if there is a homeomorphism $h:M\to M$ such that $f\circ h=h\circ g$. In the hyperbolic setting this problem was achieved by Franks \cite{Fr}, \cite{Fr1} and A. Manning \cite{Man} with the well-known results about the classification of the globally hyperbolic (Anosov) diffeomorphisms on nilmanifolds: any Anosov diffeomorphism on a nilmanifold is conjugated to its linear part. 

A fundamental tool in the proofs of these results is the stable manifold theorem, i.e. the integrability of the stable and unstable bundles of a uniformly hyperbolic diffeomorphism. Since these sub-bundles are transversal, their corresponding integrated foliations fill the space at least locally. 

In the partially hyperbolic case, given $f:M \to M$ with a splitting of the form $TM=E^s_f\oplus E^c_f\oplus E^u_f$, it is well known that the bundles $E^s_f$ and $E^u_f$ integrate into unique $f$-invariant foliations $\cWs_f$ and $\cWu_f$ \cite{HPS}. However, the center bundle $E^c_f$ can have many different behaviors and one hopes to be able to integrate the center bundle too, although this is not always the case. This represents the first important difference between global and partial hyperbolicity.

We say that a partially hyperbolic diffeomorphism $f$ is \textit{dynamically coherent} if the bundles $E^{s}_f\oplus E^{c}_f$ and $E^c_f\oplus E^u_f$  integrates to invariant foliations (and hence, the center bundle $E^c_f$ does too). Otherwise we say that $f$ is \textit{dynamically incoherent}. The first example of dynamically incoherent partially hyperbolic diffeomorphism was built in \cite{Wi} (see also \cite{BuW0}) on a six dimensional nilmanifold with 4-dim center bundle. Later in \cite{RHRHU1}, the authors built an example on the torus $\T^3$ (with 1-dim center bundle). In the later example on the 3-torus, the lack of differentiability on the bundles breaks uniqueness for the center bundle (although there are curves tangent to $E^c$ by Peano's theorem). In the 6-dimensional manifold example, despite having $C^1$ bundles, the Frobenius condition fails and thus no integrability is possible on the center bundle. 

It is an open question whether dynamical coherence is a $C^1$-open condition among partially hyperbolic systems. A closely related property is \textit{plaque expansiveness} which implies this in the affirmative way \cite{HPS}. Let us mention here that to the best of our knowledge, every known example of a dynamically coherent diffeomorphism is plaque-expansive, and therefore $C^1$ stably dynamically coherent. On the other hand, in \cite{BGHP} the authors built $C^1$ open sets of dynamically incoherent examples in certain Seifert 3-manifolds.  

In addition to these $C^1$ local results, in recent years the attention has been paid in the (apparently) relation between integrability of the bundles and the isotopy class of the map. The first result in this direction is \cite{FPS} where the authors obtained dynamical coherence for entire isotopy classes (in the partially hyperbolic setting) of linear Anosov diffeomorphisms on $\T^d$. This is the first result where the integrability of the center-stable and center-unstable bundle is obtained for a whole isotopy class of maps (the nilmanifold case of this result is proven in \cite{Pi}). In the same direction, in \cite{BFFP2} it is proven that in certain Seifert 3-manifolds, every partially hyperbolic diffeomorphism isotopic to the identity is dynamically coherent, and in \cite{Mar} dynamical coherence is obtained in isotopy classes of discretized Anosov flows. On the other hand, in \cite{BFFP} the authors (following the construction in \cite{BGHP}) obtained entire isotopy classes of dynamically incoherent partially hyperbolic diffeomorphisms. 

All these results are somehow surprising, since on the one hand integrability is quite hard and technical to get, and on the other hand there is a lot of freedom to move inside isotopy classes (and there is no assumption on the behavior on center bundles despite domination). By the previous evidence, it seems that integrability or not of the center-stable and center-unstable bundles is a phenomenon that depends directly on the isotopy class of the diffeomorphism (a purely topological property). 

Our contribution in this article is to go towards this direction by showing that similar results to the one  in \cite{FPS}  hold in a much large class of maps that includes: linear partially hyperbolic automorphisms, direct products, skew products, and more general fiberings over Anosov diffeomorphisms. Surprisingly, similar methods apply to all this classes with a unified proof. 

Let us first give the following definition in order to state precisely the main theorems of this paper. Given a dynamically coherent partially hyperbolic diffeomorphism $f:M\to M$ we will say that $f$ is \textit{fibered lifted}, if it verifies the following two conditions (the precise definition is given in Definition \ref{dffph} but the following capture the essence of it):
\begin{itemize}
	\item the foliations $\wt{\cW}^{cs}_f$ and $\wt{\cW}^{u}_f$ have global product structure in the universal cover $\wt{M}$ and the same happens with $\wt{\cW}^{cu}_f$ and $\wt{\cW}^{s}_f$.
	\item the quotient space $\wt{M}/\wt{\cWc_{f}}$ is a (topological) manifold and the induced map in the quotient by center leaves $\wt{f_c}:\wt{M}/\wt{\cWc_{f}}\to \wt{M}/\wt{\cWc_{f}}$ is a  hyperbolic homeomorphism.
\end{itemize}

Our main theorem is the following:

\begin{theoalph}\label{a} Let $f:M\to M$ be a fibered lifted partially hyperbolic diffeomorphism. Let $g$ be a partially hyperbolic diffeomorphism isotopic to $f$ such that the isotopy is inside the set of partially hyperbolic diffeomorphisms (preserving the dimension of the bundles). Then $g$ is dynamically coherent.
\end{theoalph}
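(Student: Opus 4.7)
The plan is to transfer the topological structure of $f$ — namely the global product structure on $\wt{M}$ and the hyperbolicity of the quotient $\wt{M}/\wt{\cW}^c_f$ — to $g$ through the isotopy, and then use this transferred structure to integrate $E^{cs}_g$ and $E^{cu}_g$. First, I would lift the isotopy $\{f_t\}_{t\in[0,1]}$ from $f$ to $g$ to a continuous family $\{\wt{f}_t\}$ of diffeomorphisms of $\wt{M}$. Each $\wt{f}_t$ is partially hyperbolic with continuously varying bundles $E^\sigma_{f_t}$, so the stable manifold theorem produces invariant foliations $\wt{\cW}^s_{f_t}$ and $\wt{\cW}^u_{f_t}$ throughout the isotopy; in particular $\wt{\cW}^s_g$ and $\wt{\cW}^u_g$ exist, and the task reduces to integrating $E^{cs}_g$ and $E^{cu}_g$.

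Next, following the Burago--Ivanov construction (adapted as in \cite{FPS}), I would build $\wt{g}$-invariant branching foliations $\wt{\cF}^{cs}_g$ and $\wt{\cF}^{cu}_g$ tangent to $E^{cs}_g$ and $E^{cu}_g$, each consisting of complete injectively immersed $C^1$ submanifolds that cover $\wt{M}$ but may share subsets. Focusing on $\wt{\cF}^{cs}_g$ (the case of $\wt{\cF}^{cu}_g$ is symmetric), the goal is to show that the leaves of $\wt{\cF}^{cs}_g$ and those of $\wt{\cW}^u_g$ exhibit global product structure in $\wt{M}$. Continuity of $E^u_{f_t}$ along the isotopy makes $\wt{\cW}^u_g$ remain Hausdorff-close on compact sets to $\wt{\cW}^u_f$; combining this with the global product structure for $f$ coming from the fibered lifted hypothesis, one can argue topologically that any leaf of $\wt{\cF}^{cs}_g$ meets any leaf of $\wt{\cW}^u_g$ at least once, and then use the transverse $\dim E^u$-dimensional structure to ensure uniqueness of the intersection.

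Once the global product structure is in place, one projects to the quotient $\wt{M}/\wt{\cW}^c_f$. Each leaf of $\wt{\cF}^{cs}_g$ projects to a subset of this quotient, and the dynamics of $\wt{g}$ descends, up to a continuous change of coordinates produced by the isotopy, to a homeomorphism close to the hyperbolic $\fc$. Since hyperbolic homeomorphisms on the quotient are structurally stable, the descended dynamics is conjugate to $\fc$ and its stable foliation is a small deformation of the stable foliation of $\fc$. The projections of the leaves of $\wt{\cF}^{cs}_g$ must then coincide with these stable leaves, which are genuine (non-branching); pulling back, $\wt{\cF}^{cs}_g$ is an honest foliation, giving $\cWcs_g$ on $M$ by equivariance. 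The symmetric argument yields $\cWcu_g$, and $g$ is dynamically coherent.

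The principal obstacle is the non-branching step: ruling out mergings of branching leaves by showing that the descent to $\wt{M}/\wt{\cW}^c_f$ is well-defined and conjugate to a hyperbolic homeomorphism. Unlike the linear toral setting of \cite{FPS}, where the quotient is Euclidean and carries a linear Anosov map with very rigid stable--unstable foliations, here $\wt{M}/\wt{\cW}^c_f$ is only a topological manifold and $\fc$ only a hyperbolic homeomorphism. One must therefore establish or invoke a structural stability statement for hyperbolic homeomorphisms on general locally compact manifolds and verify that the induced perturbation of $\fc$ arising from the isotopy is small enough in the appropriate topology; this is where the bulk of the technical work is expected to lie, and where the unified treatment promised for automorphisms, direct and skew products, and Anosov fiberings will stand or fall.
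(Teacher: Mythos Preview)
Your outline has two genuine gaps that would prevent the argument from going through as stated.

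First, the Burago--Ivanov branching foliation construction is a codimension-one phenomenon: it requires $\dim E^u = 1$ (resp.\ $\dim E^s = 1$) so that leaves tangent to $E^{cs}$ can be ordered transversally and a no-topological-crossing property can be exploited. The paper works with bundles of arbitrary dimension (linear partially hyperbolic automorphisms on $\T^d$, products, fiberings, etc.), and in that generality there is no known analogue of branching foliations. So the very first step after producing $\wt{\cW}^{s}_g,\wt{\cW}^{u}_g$ is unavailable.

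Second, the descent of $\wt{g}$ to $\wt{M}/\wt{\cW}^c_f$ is not well-defined: $\wt{g}$ does not preserve the center foliation of $f$, so it does not induce a homeomorphism of the quotient, and there is nothing to which one could apply structural stability of $\fc$. The paper resolves this not by descending $\wt{g}$ but by building, via global shadowing for $\fc$, a continuous surjection $H_g:\wt{M}\to\wt{M}_c$ with $\fc\circ H_g = H_g\circ\wt{g}$ (Theorem~\ref{thmsta}). This semiconjugacy is the replacement for your ``continuous change of coordinates produced by the isotopy'', and it is what lets one compare $\wt{g}$-orbits with the hyperbolic model without ever quotienting $\wt{g}$.

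The paper's actual route is an open-and-closed argument along the isotopy. One isolates three properties --- $\sigma$-properness (injectivity of $H_g$ on $\wt{\cW}^\sigma_g$-leaves), SADC (existence of an auxiliary, not necessarily invariant, foliation transverse to $E^u_g$ and almost parallel to $\wt{\cW}^{cs}_f$), and global product structure --- proves an integrability criterion (Theorem~\ref{tcdi}) showing that these imply dynamical coherence plus center-fibering, and then shows the three properties are both $C^1$-open and $C^1$-closed in $\text{PH}_f(M)$. The non-merging of limit plaques is handled not by structural stability of a quotient map but by the injectivity of $H_g$ restricted to unstable leaves ($u$-properness): two plaques that merge would produce two points on the same $\wt{\cW}^u_g$-leaf with the same $H_g$-image, contradicting properness. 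Your intuition that hyperbolicity of the quotient prevents branching is correct in spirit, but the mechanism is this pointwise injectivity along strong leaves rather than a conjugacy of quotient dynamics.
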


The natural concept of stability in the partially hyperbolic setting is, according to \cite{HPS}, leaf conjugacy.  We say that two dynamically coherent partially hyperbolic diffeomorphisms $f$ and $g$ are \textit{leaf conjugate} if there exists a homeomorphism $h \colon M\to M$, called a leaf conjugacy, such that  $h$ maps a $f$-center leaf to a $g$-center leaf, and $h \circ f(\mathcal{W}_f^c(\cdot))= g \circ h(\mathcal{W}_f^c(\cdot))$. In other words, the center foliations are the same as well as the dynamics of the center leaves up to a homeomorphism. We  prove the following global stability result. 

\begin{theoalph}\label{b}
	Let $f:M\to M$ be a fibered lifted partially hyperbolic diffeomorphism. Then every partially hyperbolic diffeomorphism $g$ which is isotopic to $f$ such that the isotopy is inside the set of partially hyperbolic diffeomorphisms (preserving the dimension of the bundles) is leaf conjugate to $f$.
\end{theoalph}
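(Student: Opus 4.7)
The plan is to conjugate the hyperbolic homeomorphism $\wt{f_c}$ on $\wt M/\wt{\cWc_f}$ with an analogue $\wt{g_c}$ for $g$, and then lift that quotient conjugacy to a leaf conjugacy of $f$ and $g$ using the global product structures in $\wt M$. By Theorem~\ref{a}, $g$ is dynamically coherent, so the foliations $\cWc_g, \cWcs_g, \cWcu_g$ are at our disposal. Working in the universal cover, I would fix lifts $\wt f, \wt g:\wt M\to\wt M$ inducing the same action on $\pi_1(M)$; this is possible because $f$ and $g$ are isotopic, and the isotopy through partially hyperbolic diffeomorphisms combined with the compactness of $M$ gives $\sup_{\wt x\in \wt M} d(\wt f(\wt x),\wt g(\wt x))<\infty$.

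The key structural step, which should come out of the machinery developed for Theorem~\ref{a}, is to establish that $g$ is itself ``fibered lifted'' in the sense of the working definition: the pairs $(\wt{\cWcs_g},\wt{\cWu_g})$ and $(\wt{\cWcu_g},\wt{\cWs_g})$ have global product structure in $\wt M$; the quotient $\wt M/\wt{\cWc_g}$ is a topological manifold; and $\wt{g_c}$ is a hyperbolic homeomorphism on it. The bounded distance between $\wt f$ and $\wt g$, together with the global product structures, should provide a canonical identification at bounded distance between $\wt M/\wt{\cWc_f}$ and $\wt M/\wt{\cWc_g}$ under which $\wt{f_c}$ and $\wt{g_c}$ become uniformly close.

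Once this is in hand, I would apply structural stability of hyperbolic homeomorphisms (Walters) to produce a deck-equivariant homeomorphism $\bar h:\wt M/\wt{\cWc_f}\to \wt M/\wt{\cWc_g}$ conjugating $\wt{f_c}$ to $\wt{g_c}$. To lift $\bar h$ to a map $H:\wt M\to \wt M$, note that $\bar h$ induces companion conjugacies $\bar h_{cs}$, $\bar h_{cu}$ on the $cs$- and $cu$-leaf spaces (these are the quotients of $\wt M/\wt{\cWc_f}$ by the stable and unstable leaf spaces of $\wt{f_c}$, and similarly for $g$). For each $x\in\wt M$, the $g$-leaves $\bar h_{cs}(\wt{\cWcs_f}(x))$ and $\bar h_{cu}(\wt{\cWcu_f}(x))$ intersect transversally in exactly one $g$-center leaf, namely $\bar h(\wt{\cWc_f}(x))$, and a compatible leafwise choice along the $g$-center foliation defines $H$. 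Continuity, bijectivity and deck-equivariance of $H$ then come from the uniqueness of these intersections and the equivariance of $\bar h$, so $H$ descends to $M$ as the required leaf conjugacy.

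The principal obstacle I expect is the structural step: verifying that $\wt M/\wt{\cWc_g}$ is Hausdorff and that $\wt{g_c}$ is a hyperbolic homeomorphism close to $\wt{f_c}$. This must be extracted purely from the integrability obtained in Theorem~\ref{a} plus the bounded-distance relation between the lifts, without the benefit of a linear model, and is typically where the bulk of the technical work in this kind of argument lies. Once the structural step is achieved, structural stability and the lifting construction are comparatively standard; the remaining care is in checking that the global product structures genuinely force the intersections defining $H$ to be single points and that $H$ is a homeomorphism rather than just a continuous leafwise bijection.
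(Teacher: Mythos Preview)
Your route is genuinely different from the paper's, and the place you flag as the ``principal obstacle'' is not where the real difficulty lies.

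The paper's argument is short and indirect: it shows that center-fibered implies plaque-expansive (a two-line computation using the semiconjugacy $H_g$ and infinite expansivity of $\wt{f_c}$), then invokes the HPS local stability theorem (Theorem~\ref{thmplaqueexpansivetostablydc0}) at each point of the isotopy path $\{g_t\}$, covers the path by finitely many neighbourhoods, and concludes by transitivity of leaf conjugacy. No quotient conjugacy is ever constructed directly.

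Regarding your approach: the ``structural step'' you worry about is in fact already done by the machinery behind Theorem~\ref{a}. The map $H_g:\wt M\to\wt M_c$ from Theorem~\ref{thmsta}, combined with the center-fibered property $H_g^{-1}(H_g(\wt x))=\wt{\cWc_g}(\wt x)$ obtained in Theorem~\ref{a}, already gives a $\Gamma$-equivariant homeomorphism $\wt M/\wt{\cWc_g}\to\wt M_c$ conjugating $\wt{g_c}$ to $\wt{f_c}$; there is no need to invoke Walters, and Hausdorffness of the quotient comes for free.

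The actual gap in your proposal is the lifting. Knowing that $\bar h$ sends $f$-center leaves to $g$-center leaves only gives a bijection of leaf spaces; the phrase ``a compatible leafwise choice along the $g$-center foliation defines $H$'' is not a construction. To get a leaf conjugacy you need a \emph{homeomorphism} $H:M\to M$, which requires specifying, for each $x$, a point on the target center leaf in a continuous, equivariant way. Center leaves here can be noncompact and of arbitrary dimension, with no canonical parametrization, so this is not automatic. The HPS theorem handles exactly this issue (via graph-transform/plaque arguments), which is why the paper routes through plaque-expansiveness rather than attempting a direct lift.
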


 Recently, Doucette \cite{D}, proved that fibered partially hyperbolic diffeomorphisms over nilmanifolds with compact fibers are leaf conjugate to a smooth model that is isometric on the fibers and descends to a hyperbolic nilmanifold automorphism on the base. We may ask if our fibered lifted partially hyperbolic diffeomorphisms are also leaf conjugate to a smooth models, for instance as the ones described in Section \ref{subsecexamples}.

\subsection*{Organization of the paper}

In Section \ref{sectionpreliminaries} we present some preliminaries and well known results. In Section \ref{sfph} we introduce in detail the fibered lifted partially hyperbolic diffeomorphisms, we present some examples and we obtain a stability result. We then pass to Section \ref{sectionintegrability} where we prove an integrability criterion for partially hyperbolic diffeomorphisms isotopic to fibered lifted partially hyperbolic diffeomorphisms, which we use in Section \ref{sectionopenandclosed} to obtain dynamically coherence in the whole isotopy class of a fibered lifted partially hyperbolic diffeomorphism and prove Theorem \ref{a}. Finally in Section \ref{sectionleafc} we deal with leaf conjugacy and prove Theorem \ref{b}.

\section*{Acknowledgments} 
The authors would like to thank Rafael Potrie for fruitful conversations and suggestions about the paper. 

\section{Preliminaries} \label{sectionpreliminaries}

We say that a diffeomorphism $f:M\to M$ is \textit{partially hyperbolic} if there exists a $Df$-invariant splitting $TM=E^{s}_f\oplus E^c_f\oplus E^{u}_f$ such that there exists $N>0$ and $\lambda>1$ verifying that for every $x\in M$ and unit vectors $v^{\sigma}\in E^{\sigma}_f$, with $\sigma=s,c,u$ we have:
\begin{itemize}
	\item $\lambda \norm{Df_x^N(v^{s})}< \norm{Df_x^N(v^c)}< \lambda^{-1}\norm{Df_x^N(v^{u})}$, and
	\item $\norm{Df_x^N(v^{s})}<\lambda^{-1}<\lambda < \norm{Df_x^N(v^{u})}$.
\end{itemize} 
We are going to assume that $N=1$ due to N. Gourmelon \cite{Gou} work on adapted metrics. We will note by $\text{PH}(M)$ to the set of partially hyperbolic diffeomorphisms acting on the manifold $M$. 

It is well known that if $f\in \text{PH}(M)$ then the stable and unstable bundles $E^s_f$ and $E^u_f$ are only Hölder continuous. Nevertheless the celebrated stable manifold theorem says that the strong bundles $E_f^u$ and $E_f^s$ are uniquely integrable \cite{HPS}. Their corresponding unique integral foliations are called the \textit{strong unstable} and \textit{strong stable} foliations respectively, and we note them by $\cWu_f$ and $\cWs_f$. Notice that since $E^u_f$ and $E^s_f$ are $Df$-invariant, then unique integrability (or having a unique integral foliation) implies that their corresponding integral foliations $\cW_f^{\sigma}$ are invariant under the dynamics, i.e.,  $f(\cW_f^{\sigma}(\cdot))=\cW_f^{\sigma} (f(\cdot))$ for $\sigma=s,u$. 

Despite the stable manifold theorem, we don't have \textit{a priori} integrability of the rest of the bundles $E^{cs}_f$, $E^{cu}_f$ and $E^c_f$. This fact leads to the following definition.

\begin{df} \label{dfdc}
	A partially hyperbolic diffeomorphism $f$ is \textit{dynamically coherent} if the center-unstable bundle $E_f^{cu}:=E_f^c \oplus E_f^u$ and the center-stable bundle $E_f^{cs}:=E_f^c \oplus E_f^s$ integrate to invariant foliations. They are called the center-unstable foliation, resp. the center-stable foliation and are denoted by $\cW^{cu}_f$, $\cW^{cs}_f$.
\end{df}

Notice that dynamical coherence implies that the center distribution $E^c_f$ integrates to an invariant foliation too: if $f\in \text{PH}(M)$ is dynamically coherent, then for any $x \in M$ the set $\cWc_f(x):= \cWcs_f(x) \cap \cWcu_f(x)$ integrates $E^c_f$ to an invariant foliation and we call $\cWc_f$ the center foliation. On the other hand, the integrability of $E^c_f$ does not imply dynamically coherence: if $E^c_f$ integrates into $\cWc_f$ and if we take $\cWcs_f(x)=\cup_{y\in \cWc_f(x)}\cWs_f(y)$ we obtain a plaque tangent to $E^{cs}_f(x)$ but the union of this plaques is not going to be a foliation necessary.

We want to remark also that we don't require unique integrability of the bundles $E^{cs}_f$ and $E^{cu}_f$ in the definition above, although every known example of dynamical coherence is uniquely integrable. In \cite{BuW0} there is a long discussion about all the possible definitions of dynamical coherence that have been used since its introduction and every implications between them.

It is an open question whether dynamical coherence is a $C^1$-open condition among $\text{PH}(M)$. A closely related property is \textit{plaque expansiveness}. Before introducing it, we need a few definitions. We denote by $d$ the distance in $M$ induced by the riemannian metric. 

\begin{df} \label{defpseor}
	Given $\e>0$ we say that a sequence of points $\{x_n\}_{n\in \Z} \subset M$ is a $\e$-\textit{pseudo orbit} of $f$ if $d(f(x_n),x_{n+1})<\e$ for every $n\in \Z$. In addition, if $f$ preserves a foliation with plaquation  $\cP$, we say that the pseudo orbit respects $\cP$ if $f(x_n)$ and $x_{n+1}$ lie in a common plaque of $\cP$ for every $n \in \Z$. \end{df} 

\begin{df}\label{plqauexp}
	We say that $f\in \text{PH}(M)$ is \textit{plaque expansive} (see \textnormal{\cite[Section 7]{HPS}}) if there exists exists $\varepsilon>0$ with the following property: if $(p_n)_{n \geq 0}$ and $(q_n)_{n \geq 0}$ are $\varepsilon$-pseudo orbits which respect $\cP$ and such that $d(p_n,q_n)\leq \varepsilon$ for all $n \geq 0$, then $q_n$ and $p_n$ lie in a common plaque of $\cP$. 
\end{df}

It is known that plaque expansiveness is a $C^1$-open condition (see Theorem 7.4 in \cite{HPS}). The importance of plaque expansivity lies on the following theorem.

\begin{thm}[Theorem 7.1 \cite{HPS}, see also Theorem 1 in \cite{PSW1}]\label{thmplaqueexpansivetostablydc0}
	Let us assume that $f$ is dynamically coherent and plaque expansive. Then any $g \in \textnormal{PH}(M)$ which is sufficiently $C^1$-close to $f$ is also dynamically coherent and plaque expansive. Moreover, there exists a homeomorphism $h=h_g \colon M\to M$,
		such that  $h$ maps a $f$-center leaf to a $g$-center leaf, and $h \circ f(\mathcal{W}_f^c(\cdot))= g \circ h(\mathcal{W}_f^c(\cdot))$. 
\end{thm}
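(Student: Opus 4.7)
The plan is to adapt the classical structural-stability argument for normally hyperbolic laminations to the dynamically coherent setting, using plaque expansiveness to control the non-contracting center direction. Fix a plaquation $\cP_f$ of the center foliation $\cWc_f$ of $f$, and for $g$ sufficiently $C^1$-close to $f$ note that the splitting $E^s_g\oplus E^c_g\oplus E^u_g$ is $C^0$-close to the splitting of $f$. Parameterize candidate maps $h\colon M\to M$ lying in a small $C^0$ neighborhood of the identity as continuous sections $\sigma$ of a small-radius disc subbundle of $TM$ via $h(x)=\exp_x\sigma(x)$. The goal is to find such an $h$ solving the leafwise conjugacy equation $h(f(\cWc_f(x)))=\cWc_g(g(h(x)))$.

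Define the graph transform $T_g(h):=g^{-1}\circ h\circ f$ on this neighborhood. In the directions $E^s_g$ and $E^u_g$ partial hyperbolicity of $g$ makes $T_g$ and $T_g^{-1}$, respectively, uniform $C^0$-contractions, so a standard Banach fixed-point argument in each direction yields a unique invariant continuous section in the transverse components; equivalently, a continuous candidate $h_0$ that is unique modulo motion along $f$-center plaques. In the center component there is no contraction, so a priori one has a whole family of continuous fixed points. This is where plaque expansiveness enters: any two such fixed points $h_1,h_2$, compared through the relation $h_i^{-1}\circ g\circ h_i(x)\in\cP_f(f(x))$, produce pairs of $\e$-pseudo orbits of $f$ that respect $\cP_f$ and stay $\e$-close, and plaque expansiveness of $f$ forces them to lie in a common plaque at every time. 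This collapses the ambiguity down to the level of center plaques and lets us select a canonical representative $h$ which is continuous, close to the identity, and injective once we feed the plaque-expansiveness constant back into the transverse contraction estimate; hence $h$ is a homeomorphism, and the identity $h\circ f(\cWc_f(\cdot))=g\circ h(\cWc_f(\cdot))$ holds.

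Finally, to obtain the invariant center-stable and center-unstable foliations of $g$ one sets $\cWcs_g:=h(\cWcs_f)$ and $\cWcu_g:=h(\cWcu_f)$. Normal hyperbolicity estimates show that each leaf of $h(\cWcs_f)$ is tangent to $E^{cs}_g$, because the angle between $Th(\cWcs_f)$ and $E^{cs}_g$ at corresponding points must converge to zero under forward iteration by $g$; similarly for $\cWcu_g$. This gives dynamical coherence of $g$. Plaque expansiveness of $g$ follows either by repeating the graph-transform argument for $g$ in place of $f$, or by invoking the general openness of plaque expansiveness among $C^1$-perturbations of a dynamically coherent system (HPS, Theorem 7.4). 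The main obstacle throughout is the absence of hyperbolicity along the center direction: the leaf conjugacy cannot be produced by pure contraction, and plaque expansiveness is precisely what replaces contraction in that direction, ensuring uniqueness of the center fibers and, consequently, both the continuity and the injectivity of $h$.
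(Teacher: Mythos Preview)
The paper does not supply its own proof of this statement; it is quoted verbatim as Theorem~7.1 of \cite{HPS} (with a pointer to \cite{PSW1}) and used thereafter as a black box. So there is no in-paper argument to compare against, and your sketch has to be measured against the original HPS proof.

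Your outline captures the overall spirit of that proof --- a graph-transform construction, contraction transverse to the center, and plaque expansiveness to kill the ambiguity along the center --- but there is a real gap in how you deduce dynamical coherence of $g$. You set $\cWcs_g:=h(\cWcs_f)$ and then argue tangency to $E^{cs}_g$ via an angle-contraction estimate on ``$Th(\cWcs_f)$''. The problem is that $h$ is only a homeomorphism, not a diffeomorphism: $h(\cWcs_f(x))$ is a priori just a topological submanifold, and the expression $Th(\cWcs_f)$ has no meaning. The HPS machinery does not manufacture the $g$-invariant foliations as $h$-images of the $f$-foliations. Instead, the graph transform is set up on a bundle whose fibers are spaces of $C^1$ embeddings of center plaques (not on continuous self-maps of $M$), so that normal hyperbolicity gives, for each $f$-center leaf, a nearby $g$-invariant $C^1$ submanifold tangent to $E^c_g$; plaque expansiveness is precisely what ensures these perturbed leaves assemble into an honest foliation rather than a branched lamination. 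Once $\cWc_g$ exists with $C^1$ leaves, $\cWcs_g$ and $\cWcu_g$ are obtained by saturating center leaves with strong stable and strong unstable leaves, not by pushing forward through $h$.

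There is also a conflation in your fixed-point setup: a fixed point of $T_g(h)=g^{-1}\circ h\circ f$ on self-maps of $M$ would yield $g\circ h=h\circ f$, an actual conjugacy, which is generically impossible in the presence of a nontrivial center. The ``family of fixed points in the center component'' you describe is really the statement that this operator has no fixed point in general; the correct functional-analytic setting (sections of a disc bundle of plaque embeddings, as in \cite{HPS}) is what makes the contraction argument go through and produces a leaf conjugacy rather than a conjugacy.
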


As a result, every $f\in \text{PH}(M)$ dynamically coherent and plaque-expansive is $C^1$ stably dynamically coherent. The problem then, is to decide when a partially hyperbolic diffeomorphism is plaque exapansive. This question is open in its full generality although plaque expansivity has been obtained in several cases:

\begin{itemize}
	\item when the center foliation $\cWc_f$ is $C^1$ (or $E^c_f$ is $C^1$ or both $E^{cs}_f$ and $E^{cu}_f$ are $C^1$) this was proved in \cite{HPS}.
	\item when $Df|_{E^c_f}$ is an isometry this was proved in \cite{RHRHU0}, originally mentioned in \cite{HPS} without proof.
	\item when the center foliation $\cWc_f$ is uniformly compact, i.e. every center leaf is compact and there is a uniform bound on the volumes, proved in \cite{BB}, \cite{Car}. 
	\item when $f$ is a discretized Anosov flow \cite{Mar}.
\end{itemize}

We finish this section by recalling another definition (which we already mention in the introduction) that arises from Theorem \ref{thmplaqueexpansivetostablydc0} and it is related with the topological stability of a partially hyperbolic diffeomorphism.

\begin{df} We say that two dynamically coherent partially hyperbolic diffeomorphisms $f,g:M \to M$ are \textit{leaf conjugate} if there exists a homeomorphism $h\colon M\to M$, called a \textit{leaf conjugacy}, such that  $h$ maps a $f$-center leaf to a $g$-center leaf, and $h \circ f(\mathcal{W}_f^c(\cdot))= g \circ h(\mathcal{W}_f^c(\cdot))$. 
\end{df}

Leaf conjugacy is the analogous to conjugacy for Anosov diffeomorphisms in the partially hyperbolic case (notice that we need dynamically coherence for this definition to make sense). Hence by Theorem \ref{thmplaqueexpansivetostablydc0} every $f\in \text{PH}(M)$ dynamically coherent and plaque expansive is topologically stable in the sense mentioned above.

\section{Fibered lifted partially hyperbolic diffeomorphisms} \label{sfph}

\subsection{Definitions and quotient dynamics}
Let $(X,dist)$ a metric space. Given a subset $K \subset X$ and $R>0$ we call $B(K,R)$ the $R$-neighbourhood of $K$, that is, the set of points in $X$ that are less than $R$ from some point in $K$:
$$B(K,R)=\{ x\in X: \text{there is} \ y\in K \ \text{s.t.} \  dist(x,y)<R\}
$$ This includes the case $K=\{x\}$ and $B(x,R)=\{y\in X: dist(x,y)<R\}
$. 

Given a homeomorphism $f:X\to X$ on a metric space $(X,dist)$, we define the \textit{stable set} and  the \textit{unstable set} of a point $x\in X$ as the sets:  
\begin{align*}
	\cWs_f(x)&=\{y \in X: dist(f^{n}(x),f^{n}(y))\to_{n \to +\infty} 0\} \\ \cWu_f(x)&=\{y \in X: dist(f^{-n}(x),f^{-n}(y))\to_{n \to +\infty} 0\} \end{align*}
	
For $r>0$ we define the \textit{stable and unstale disk of size $r$} the following:	
	\begin{align*}
	D^s_f(x,r)&=\{y \in \cW^s_f(x): dist(x,y)<r\} \\ D^u_f(x,r)&=\{y \in \cW^u_f(x): dist(x,y)<r\}\end{align*}

For the next definition, one may  have in mind  the case of a linear hyperbolic map on a euclidean space.

\begin{df}\label{dfhyphom}
	We say that a homeomorphism $f:X \to X$ on a (non-compact) metric space $(X,dist)$ is \textit{globally uniformly hyperbolic} if the following holds: 
	\begin{enumerate}
	
		\item\label{c1hyphom} The stable and unstable sets have a uniform behaviour: 
		 there exists $n\in \N$ such that for every $r>0$ we have:
		
		\begin{itemize}
		\item  $f^n(D^s_f(x,r))\subset D^s_f(f^n(x),r/2).$
		\item $f^{-n}(D^u_f(x,r))\subset D^u_f(f^{-n}(x),r/2).$		\end{itemize}
	\item  \label{c2hyphom}\begin{itemize}
	\item If $x_1,x_2\in \cWs_{f}(x)$ then $dist(f^{-n}(x_1),f^{-n}(x_2))\to_{n\to\infty}\infty.$
	\item If $x_1,x_2\in \cWu_{f}(x)$ then $dist(f^{n}(x_1),f^{n}(x_2))\to_{n\to\infty}\infty.$
	\end{itemize}	
		\item \label{c3hyphom} There exists global product structure (GPS): for any $x, y\in X,$ then $\cWs_f(x)$ and $\cWu_f(y)$ intersect at exactly one point denoted by $[x,y]$ and this point depends continuously on $(x,y).$ 
	\end{enumerate}
\end{df}

According to Definition \ref{dfdc}, a dynamically coherent partially hyperbolic diffeomorphism $f:M\to M$ has two invariant foliations $\cW^{cu}_f$ and $\cW^{cs}_f$ tangent to $E_f^c \oplus E_f^{u}$ and $E_f^{s} \oplus E^c_f$ respectively. This implies in addition, that we have a center foliation $\cWc_f(x):= \cWcs_f(x) \cap \cWcu_f(x)$ which is also $f$-invariant and tangent to $E^c_f$. 

This center foliation $\cWc_f$ gives a partition of the manifold $M$ and thus we have a well defined quotient space $M/\cWc_f$.  Nevertheless, unless the center foliation is quite particular, this quotient space is wild. It is therefore convenient to lift first to the universal cover and then to consider the quotient space. As we will see, in many examples this is very well behaved.

Let $\pi:\wt{M}\to M$ be the universal cover of $M$ and recall that $M=\wt{M}/\Gamma$ where $\Gamma=\pi_1(M)$ acts on $\wt{M}$ by isometries. In this case we have that $\wt{\cW}^c_f$ gives a partition of the manifold $\wt{M}$ and we have a well defined quotient space $\wt{M}/\wt{\cW}^c_f$. We are going to note by $\wt{p}:\wt{M}\to \wt{M}/\wt{\cW}^c_f$ to the projection into equivalence classes, and its corresponding induced map will be:  $$\wt{f_c}:\wt{M}/\wt{\cW}^c_f\to \wt{M}/\wt{\cW}^c_f \ \ \ \text{given by} \ \ \ \wt{p}\circ \wt{f}=\wt{f_c}\circ \wt{p}$$ 
The idea of these maps is to ``cancel'' the non-hyperbolic behavior of the partially hyperbolic diffeomorphism $f$ in order to get some hyperbolicity in the quotient space. The following is the main object of this article.

\begin{df} \label{dffph}
	Let $f:M\to M$ be a dynamically coherent partially hyperbolic diffeomorphism of class $C^r$. We say that $f$ is \textit{fibered lifted} if:
	\begin{enumerate}[label=(\subscript{H}{{\arabic*}})]
		\item the foliations $\wt{\cW}^{cs}_f$ and $\wt{\cW}^{u}_f$ have global product structure; \\ the foliations $\wt{\cW}^{cu}_f$ and $\wt{\cW}^{s}_f$ have global product structure. \label{fph1}
		
		\item For every $\wt{x},\wt{y}\in \wt{M}$ we have that the Hausdorff distance $d_H(\wt{\cW}^c_f(\wt{x}),\wt{\cW}^c_f(\wt{y}))<\infty$.  \label{fph2}  In particular, it induces a distance \textit{dist} in the quotient space $\wt{M}_c:=\wt{M}/\wt{\cWc_{f}}$ 
		by\footnote{In general the Hausdorff distance is defined between compact sets, but it works in the same way for closed sets under the assumtpion that is finite and defined as $$d_H(\wt{\cW}^c_f(\wt{x}),\wt{\cW}^c_f(\wt{y})):=\max \left\{ \sup_{z\in \wt{\cW}^c_f(\wt{x})} d\left(z,\wt{\cW}^c_f(\wt{y})\right), \sup_{z\in \wt{\cW}^c_f(\wt{y})} d\left(z,\wt{\cW}^c_f(\wt{x})\right) \right\}$$}
		$$dist(\wt{p}(\wt{x}),\wt{p}(\wt{y})):=d_H(\wt{\cW}^c_f(\wt{x}),\wt{\cW}^c_f(\wt{y})).$$ 
		This distance is compatible with the quotient topology.
		
		\item the map $\wt{f_c}:\wt{M}/\wt{\cWc_{f}}\to \wt{M}/\wt{\cWc_{f}}$ is a  globally uniformly hyperbolic homeomorphism with
		\label{fph3}
		$$\cWs_{\wt{f}_c}([\wt{x}]):=\wt{p}(\wt{\cW}^{s}_f(\wt{x}))\;\;\mbox{ and }\;\;\;\cWu_{\wt{f}_c}([\wt{x}]):=\wt{p}(\wt{\cW}^{u}_f(\wt{x}))	$$
		
	\end{enumerate}
\end{df}
\begin{rem}\label{r.pi}
By the global product structure \ref{fph1} we also have that \textit{inside} center-unstable leaves, global product structure holds between center and unstable leaves: for any $\wt{y}, \wt{z}\in\wt{\cW}^{cu}_f(\wt{x})$ we have $\sharp(\wt{\cW}^c_f(\wt{y})\cap \wt{\cW}^{u}_f(\wt{z}))=1.$ The same for center-stable leaves. 
\end{rem}

\subsection{Examples} \label{subsecexamples}

The following are a few examples of fibered lifted partially hyperbolic diffeomorphisms. The list is not intended to exhaustive. 

\subsubsection*{Anosov automorphisms.}
Let $A\in \text{SL}(d,\Z)$ be a hyperbolic matrix with a splitting of the form $\R^d=E^{ss}_A\oplus E^{ws}_A\oplus E^{wu}_A\oplus E^{uu}_A$. This matrix induces an Anosov diffeomorphism $f:\T^d\to\T^d$. We can see $f$ as a fibered lifted partially hyperbolic with trivial fibers. In this case, $\R^d/\wt{\cW}^c_f=\R^d$ and $\wt{f_c}=A$ and the points in Definition \ref{dffph} are trivially satisfied.

On the other hand we can see $f$ as a partially hyperbolic diffeomorphism by taking the center bundle as $E^c_f=E^{ws}_A\oplus E^{wu}_A$. Since $E^c_A$ is a linear subspace, we get that $f$ is dynamically coherent and moreover $f$ has global product structure (as in \ref{fph1}). The quotient space is $\R^d/\wt{\cW}^c_f=E^{ss}_A \oplus E^{uu}_A=\wt{M}_c$ and the map $\wt{p}$ can be seen as the orthogonal projection $\Pi^{su}:\R^d\to E^{ss}_A\oplus E^{uu}_A=\wt{M}_c$ proving point
\ref{fph2}. The quotient map is $\wt{f_c}=A|_{E^{ss}_A\oplus E^{uu}_A}$ and we get \ref{fph3}. Therefore $f$ is a fibered lifted partially hyperbolic diffeomorphism.

\subsubsection*{Partially hyperbolic automorphisms.}
Let $A \in \text{SL}(d,\Z)$ be a matrix with a splitting of the form $\R^d=E^{s}_A\oplus E^c_A \oplus E^{u}_A$, where $E^c_A$ is the generalized eigenspace associated to the eigenvalues of modulus equal to one. Like in the Anosov case above, the matrix $A$ induces a map $f:\T^d\to \T^d$ which is a dynamically coherent partially hyperbolic diffeomorphism. Since $f$ is linear, it's clear that $f$ has global product structure as in \ref{fph1}, the quotient space is $\R^d/\wt{\cW}^c_f=E^{s}_A\oplus E^{u}_A$ and the map $\wt{p}$ is the orthogonal projection $\Pi^{su}:\R^d\to E^{s}_A\oplus E^{u}_A$ proving \ref{fph2}. Finally observe that $\wt{f_c}=A|_{E^{s}_A\oplus E^{u}_A}$ and thus we get point \ref{fph3}. 

\subsubsection*{Automorphisms on nilmanifolds.} The same examples mentioned above can be carried out in the nilmanifold case. Let $G$ be a connected, simply connected nilpotent Lie group and denote by $\mathfrak{g}$ its Lie algebra. Let $A:G \to G$ be a Lie group isomorphism, and let $\Gamma$ be a discrete and cocompact subgroup which is $A$-invariant. Then $A$ induces a map $f_A$ in the corresponding quotient space $M=G/\Gamma$, given by $f_{A}(x \cdot \Gamma)=A(x) \cdot \Gamma$. The differential $DA_{e}:T_{e}G \to T_{e}G$ is a linear isomorphism and induces a Lie algebra isomorphism between the corresponding Lie algebras $dA:\mathfrak{g} \to \mathfrak{g}$. This correspondence between $\mathfrak{g}$ and $T_{e}G$ comes from the linear isomorphism $\alpha:\mathfrak{g}\to T_{e}G$ which sends $X\in \mathfrak{g}$ to the vector $X(e)\in T_{e}G$ and it also conjugates the maps $DA_{e}$ and $dA$: $DA_{e}\circ \alpha = \alpha \circ dA$. We say that $f_A$ is \textit{partially hyperbolic} if $dA: \mathfrak{g} \to \mathfrak{g}$ admits a partially hyperbolic splitting of the form: 
$$\mathfrak{g}=\mathfrak{g}^{s}\oplus \mathfrak{g}^{c}\oplus \mathfrak{g}^{u}$$
where $\mathfrak{g}^{s}$, $\mathfrak{g}^{c}$ and $\mathfrak{g}^{u}$ are called the \textit{strong stable}, \textit{center} and \textit{strong unstable} subspaces respectively. The direct sums $\mathfrak{g}^{cs}=\mathfrak{g}^{s}\oplus \mathfrak{g}^{c}$ and $\mathfrak{g}^{cu}=\mathfrak{g}^{c}\oplus \mathfrak{g}^{u}$ are the \textit{central stable} and \textit{central unstable} subspaces. It's easy to see that $\mathfrak{g}^{s}$ and $\mathfrak{g}^{u}$ are Lie subalgebras, however, we are going to assume that every subspace mentioned above is a Lie subalgebra of $\mathfrak{g}$. Now we can choose an appropriate inner product in $\mathfrak{g}$ and send it to the tangent space $T_{e}G$ by the isomorphism $\alpha :\mathfrak{g} \to T_{e}G$, and translating by left multiplication we get a Riemannian metric which is adapted to this splitting. In the same way we define the distributions $E^{*}_{f_A}(x):=L_x(\alpha(\mathfrak{g}^{*}))$ for $*=s,cs,c,cu,u$. Since the group $G$ is nilpotent, every Lie subalgebra $\mathfrak{g}^{*}$ is integrable and moreover, their corresponding integral subgroup $G^{*}$ are just the image by the exponential map, i.e.  $\exp(\mathfrak{g}^{*})=G^{*}$. Then the distributions $E^{*}_{f_A}(x)$ are integrable and their corresponding tangent foliations are given by $\wt{\cW}^{*}_{f_A}(x)=L_x(G^{*})$ for $*=s,cs,c,cu,u$. It can be seen that the foliations $\wt{\cW}^{cs}_{f_A}$ and $\wt{\cW}^{u}_{f_A}$ have GPS, and the same happens with  $\wt{\cW}^{cu}_{f_A}$ and $\wt{\cW}^{s}_{f_A}$ getting Point \ref{fph1}. Since the center foliation  $\wt{\cW}^{c}_{f_A}$ is defined by left multiplication (which are isometries) we get Point \ref{fph2}. Finally Point \ref{fph3} can be deduced from Proposition \ref{p.suficient} (see \cite{Pi} for more details). 

\subsubsection*{Anosov $\times$ Identity.} 
Let $A\in \text{SL}(d,\Z)$ be a hyperbolic matrix with a splitting of the form $\R^d=E^s_A\oplus E^u_A$ and let $f:\T^d\to \T^d$ be the induced Anosov diffeomorphism as above. Let $N$ be any other manifold of any dimension and let $g:\T^d\times N\to T^d \times N$ be the map $g=f\times \text{Id}$. Then $g$ is a dynamically coherent partially hyperbolic diffeomorphism with global product structure proving point \ref{fph1}. The center leaves are of the form $\cW^c_g(x,y)=\{x\}\times N$, and thus its quotient space is $(\R^d\times \wt{N})/\wt{\cW}^c_g=\R^d=E^{s}_A\oplus E^{u}_A$, the projection $\wt{p}_g:\R^d\times \wt{N}\to (\R^d \times \wt{N})/\wt{\cW}^c_g=\R^d$ is just the projection on the first coordinate and the induced map is $\wt{g_c}=A|_{E^s_A\oplus E^u_A}$. This shows points \ref{fph2} and \ref{fph3} proving that $g$ is fibered lifted partially hyperbolic.

\subsubsection*{Dominated splitting examples.} 
Generalizing the previous example take $f:M\to M$ any of the previous fibered lifted partially hyperbolic diffeomorphisms and let $N$ be a manifold of any dimension. Take a map $g:N\to N$ such that its behavior is dominated by $f$: there exist $\lambda\in (0,1)$ such that $\norm{Dg_y}\leq \lambda m(Df_x|_{E^u_f})$ and $\norm{Df_x|_{E^s_f}}\leq \lambda m(Dg_y)$ for every $x\in M$, $y\in N$. Then the map $F:M \times N \to M \times N$ defined by $F=f \times g$ is a dynamically coherent partially hyperbolic diffeomorphism. Since the center leaves are $\cW^c_F(x,y)=\cW^c_f(x)\times N$ the quotient space is $(\wt{M}\times \wt{N})/\wt{\cW}^c_F=\wt{M}/\wt{\cW}^c_f$. Moreover since $f$ is fibered lifted, we have that $F$ has global product structure as in \ref{fph1}. The projection $\wt{p}_F$ is the function $\wt{p}_F(\wt{x},\wt{y})=\wt{p}_f(\wt{x})$ where $\wt{p}_f:\wt{M}\to \wt{M}/\wt{\cW}^c_f$ proving \ref{fph2} and the induced map is just $\wt{F_c}=\wt{f_c}$ getting point \ref{fph3}.

\subsubsection*{Skew-products} Let $f:\T^d\to \T^d$ be a partially hyperbolic automorphism induced by some partially hyperbolic matrix as above (with the nilmanifold example works as well) and let $G$ be a compact Lie group. Take a smooth function $\theta:N \to G$ and consider the map $F:N\times G \to N\times G$ given by $F(x,g)=(f(x),\theta(x)g)$. Then it is easy to see that $F$ is a dynamically coherent partially hyperbolic diffeomorphism with global product structure in the universal cover proving \ref{fph1}. The center leaves are given by $\cW^c_F(x,g)=\{x\}\times G$, and therefore we have point \ref{fph2}. By the same reason the projection into equivalence classes $\wt{p}_F$ is just the projection into the first coordinate and the induced map is just $\wt{F_c}=\wt{f}$ getting point \ref{fph3} and therefore $F$ is a fibered lifted partially hyperbolic diffeomorphism.  Perturbations are not needed to be in the skew product setting.

\subsubsection*{Fiberings} More general than the previous examples, we have the systems that fiber over partially hyperbolic diffeomorphisms. Take $f:\T^d \to \T^d$ be a fibered lifted partially hyperbolic diffeomorphism with a splitting of the form $T\T^d=E^s_f \oplus E^c_f \oplus E^u_f$ (with the nilmanifold example mentioned above works as well). Take a fibration $N \hookrightarrow M \xrightarrow{\pi} \T^d$, i.e. $\pi^{-1}(\{x\}) \simeq N$ for every $x\in \T^d$, and denote by $N(x)=\pi^{-1}(\{x\})$ to the fiber through $x$. Consider a lift $F:M \to M$, i.e. a map such that $\pi \circ F=f \circ \pi$. Then if we ask for the lift $F$ to verify:
$$ \norm{Df_{\pi(x)}|_{E^s_f}}< m(DF_x|_{TN(x)})\leq \norm{DF_x|_{TN(x)}}< m(Df_{\pi(x)}|_{E^u_f} )
$$
then $F$ is partially hyperbolic and  a dynamically coherent. Moreover since the map in the base $f$ is a fibered lifted p.h. we have that $F$ has global product structure \ref{fph1} and center leaves in the universal cover are $\wt{\cWc_F}(\wt{x})=\wt{\cWc_f}(\pi(\wt{x}))\times \wt{N}$ proving \ref{fph2}. It is direct to check that the projection map $\wt{p}_F$ is just the composition $\wt{p}_f \circ \pi$, showing point \ref{fph3}. 

\subsection{Shadowing and stability} 

Let $f:M \to M$ be a \textit{fibered lifted partially hyperbolic diffeomorphism.} We are going to show some properties of $\wt{M}_c=\wt{M}/\wt{\cWc_f}$ and of $\wt{f}_c.$

From now on for simplicity, we are going to note by $$[\wt{x}]:=\wt{p}(\wt{x})\in \wt{M}_c \ \ \text{for every} \ \ \wt{x}\in \wt{M}.$$ Recall that $M=\wt{M}/\Gamma$ where $\gamma=\pi_1(M)$ acts on $\wt{M}$ by isometries. Then we can define an action of $\Gamma$ in $\wt{M}_c$ by the equation
$$ \gamma \cdot [\wt{x}]:=[\gamma \cdot \wt{x}]$$ Since for every $\gamma \in \Gamma$ we have that $\gamma \cdot \wt{\cW}^c_f(\wt{x})=\wt{\cW}^c_f(\gamma\cdot \wt{x})$ the action is well defined and moreover for every $\gamma\in \Gamma$ and every $\wt{x},\wt{y}\in \wt{M}$ we have:
\begin{eqnarray*}
	dist(\gamma \cdot [\wt{x}], \gamma \cdot [\wt{y}]) &=& dist( \gamma \cdot \wt{p}(\wt{x}), \gamma \cdot \wt{p}(\wt{y}) )=d_H(\gamma \cdot \wt{\cW}^c_f(\wt{x}), \gamma \cdot \wt{\cW}^c_f(\wt{y}))   \\
	&=&d_H(\wt{\cW}^c_f(\gamma \cdot \wt{x}), \wt{\cW}^c_f(\gamma \cdot \wt{y})) = d_H(\wt{\cW}^c_f(\wt{x}), \wt{\cW}^c_f(\wt{y})) \\
	&=&dist(\wt{p}(\wt{x}),\wt{p}(\wt{y}))=dist([\wt{x}],[\wt{y}])
\end{eqnarray*} and the action preserves the distance.

The following properties of $\wt{M}_c$ and of $\wt{f}_c$ are ``known'' in the examples above. Recall that for $[\wt{x}]$ we have the stable and unstable sets $\cWs_{\wt{f}_c}([\wt{x}])$ and  $\cWu_{\wt{f}_c}([\wt{x}])$ and the map $[[\wt{x}],[\wt{y}]]= \cWs_{\wt{f}_c}([\wt{x}])\cap  \cWu_{\wt{f}_c}([\wt{y}])$

\begin{prop}\label{p.propertiesc}
Let $f:M\to M$ be a fibered lifted partially hyperbolic diffeomorphism and let $\wt{M}_c, \wt{f}_c$ and 
$\wt{p}$ as above. Then:

\begin{enumerate}
\item Given $K>0$ there is $C>0$ such that, if $d(\wt{x},\wt{y})\leq K$ then $dist([\wt{x}],[\wt{y}])\leq C$. \label{p1properties}

\item Given $K>0$ there exists $C>0$ such that if $dist([\wt{x}],[\wt{y}])<K$ then $dist([\wt{x}],[[\wt{x}],[\wt{y}]])\le C.$ The same with $[\wt{y}].$ \label{p2properties}

\item The map $\wt{p}:\wt{\cW}^{\sigma}_f(\wt{x})\to \cW^{\sigma}_{\wt{f}_c} ([\wt{x}]), \sigma=s,u$ is a homeomorphism. In particular $\cWs_{\wt{f}_c}([\wt{x}])$ and  $\cWu_{\wt{f}_c}([\wt{x}])$ are homeomorphic to euclidean spaces, and so contractible. \label{p3properties}

\item $\wt{f}_c:\wt{M}_c\to\wt{M}_c$ is infinitely expansive: if $\sup_{n\in\Z}dist(\wt{f}_c^n([\wt{x}]),\wt{f}_c^n([\wt{y}]))<\infty$ then $[\wt{x}]=[\wt{y}].$ \label{p4properties}

\item $\wt{f}_c:\wt{M}_c\to\wt{M}_c$ has the \textit{global shadowing property}: given $K>0$ there exists $\alpha>0$ such that every $K$-pseudo orbit is $\alpha$-shadowed by a true orbit, i.e., if $[\wt{x}_n]$ is a sequence in $\wt{M}_c$ with $dist(\wt{f}_c([\wt{x}_n]),[\wt{x}_{n+1}])<K$ for all $n\in\Z$, there exists a unique $[\wt{y}]$ such that $dist(\wt{f}^n_c([\wt{y}]),[\wt{x}_n])<\alpha$ for all $n\in\Z.$ \label{p5properties}
\end{enumerate}
\end{prop}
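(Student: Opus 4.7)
The five parts split naturally into three groups: (1)--(2) are uniform bounds coming from $\Gamma$-equivariance; (3) is an injectivity/continuity statement using the global product structure inside center-(un)stable leaves; (4)--(5) are the classical expansivity/shadowing consequences of global uniform hyperbolicity that become routine once (2) is in hand.

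For (1), the function $(\wt{x},\wt{y})\mapsto dist([\wt{x}],[\wt{y}])$ is continuous on $\wt{M}\times\wt{M}$ and invariant under the diagonal $\Gamma$-action, since $\Gamma$ acts by isometries both on $\wt{M}$ and, as computed in the excerpt, on $(\wt{M}_c,dist)$. The $\Gamma$-invariant closed set $\{(\wt{x},\wt{y}):d(\wt{x},\wt{y})\le K\}$ has compact diagonal quotient (it fibres over $M$ with fibres $\overline{B}(\wt{x}_0,K)\subset\wt{M}$), forcing a uniform bound. For (2), use the Hausdorff characterization of $dist$ to pick a representative $\wt{y}'\in\wt{\cW}^c_f(\wt{y})$ with $d(\wt{x},\wt{y}')\le K+1$; since $\wt{y}'$ and $\wt{y}$ share a center-unstable leaf, the intersection point $\wt{w}:=\wt{\cW}^s_f(\wt{x})\cap\wt{\cW}^{cu}_f(\wt{y}')$ (well-defined by \ref{fph1}) satisfies $[\wt{w}]=[[\wt{x}],[\wt{y}]]$. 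The assignment $(\wt{x},\wt{y}')\mapsto\wt{w}$ is continuous and $\Gamma$-equivariant, so $d(\wt{x},\wt{w})$ is uniformly bounded by the same compactness argument, and (1) upgrades this to $dist([\wt{x}],[\wt{w}])\le C$; the estimate for $[\wt{y}]$ is symmetric.

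For (3), surjectivity is the definition and continuity is immediate. Injectivity: if $\wt{y},\wt{z}\in\wt{\cW}^s_f(\wt{x})$ satisfy $[\wt{y}]=[\wt{z}]$, then $\wt{z}\in\wt{\cW}^c_f(\wt{y})\cap\wt{\cW}^s_f(\wt{x})$, and since both points lie in the common center-stable leaf $\wt{\cW}^{cs}_f(\wt{x})$, Remark \ref{r.pi} applied inside that leaf forces $\wt{y}=\wt{z}$. Continuous inverse at $\wt{x}$: given $[\wt{y}_n]\to[\wt{x}]$, pick $\wt{z}_n\in\wt{\cW}^c_f(\wt{y}_n)$ with $\wt{z}_n\to\wt{x}$ in $\wt{M}$; then $\wt{y}_n=\wt{\cW}^c_f(\wt{z}_n)\cap\wt{\cW}^s_f(\wt{x})$ inside $\wt{\cW}^{cs}_f(\wt{x})$ tends to $\wt{x}$ by continuity of that intersection. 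Contractibility is inherited from $\wt{\cW}^s_f(\wt{x})$ being diffeomorphic to Euclidean space. For (4), if $[\wt{x}]\neq[\wt{y}]$ have bounded orbit separation, set $[\wt{z}]=[[\wt{x}],[\wt{y}]]$: as $n\to+\infty$ one has $dist(\wt{f}^n_c([\wt{z}]),\wt{f}^n_c([\wt{x}]))\to 0$, so $dist(\wt{f}^n_c([\wt{z}]),\wt{f}^n_c([\wt{y}]))$ remains bounded, forcing $[\wt{z}]=[\wt{y}]$ by condition \ref{c2hyphom} applied on the common unstable leaf of $[\wt{z}]$ and $[\wt{y}]$; a symmetric argument with $[[\wt{y}],[\wt{x}]]$ going to $-\infty$ puts $[\wt{y}]$ on $\cWu_{\wt{f}_c}([\wt{x}])$ as well, and GPS uniqueness gives $[\wt{x}]=[\wt{y}]$.

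Finally, (5) is the standard shadowing construction for globally uniformly hyperbolic homeomorphisms: the uniform local product box from (2), combined with the iterated contraction of $D^s$- and $D^u$-disks from condition \ref{c1hyphom}, allows inductive correction of finite truncations of the pseudo-orbit via alternating intersections of local stable and unstable disks; a subsequential limit then exists because $\wt{M}_c$ is a proper metric space (bounded subsets have compact closure: any sequence $[\wt{y}_n]\in\overline{B}([\wt{x}],R)$ admits representatives $\wt{y}'_n\in\overline{B}(\wt{x},R+1)\subset\wt{M}$), and uniqueness of this limit is (4). The main technical obstacle is (2): because $\wt{M}_c$ is generally non-compact, the local product box must be controlled uniformly in $([\wt{x}],[\wt{y}])$, and the key trick is to lift the operation $([\wt{x}],[\wt{y}])\mapsto [[\wt{x}],[\wt{y}]]$ back to $\wt{M}$ where cocompactness of the $\Gamma$-action turns the estimate into a genuine compactness statement.
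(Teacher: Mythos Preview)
Your proof is correct and follows essentially the same strategy as the paper: $\Gamma$-equivariance plus cocompactness for (1)--(2), the global product structure inside center-(un)stable leaves for (3), and the standard bracket-based expansivity and shadowing arguments for (4)--(5). The only noteworthy variations are that for (2) you lift the bracket operation back to $\wt{M}$ (whereas the paper works directly in $\wt{M}_c$ via a fundamental domain there) and for (3) you verify openness of $\wt{p}|_{\wt{\cW}^\sigma_f(\wt{x})}$ sequentially rather than via center-stable saturations; both alternatives are valid and arguably a touch cleaner, and you also make explicit the properness of $\wt{M}_c$ needed for the accumulation-point step in (5), which the paper leaves implicit.
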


\begin{proof}

Item \eqref{p1properties}. Let $D$ be a compact fundamental domain of $\wt{M}$ and let $\gamma_1,\dots,\gamma_k \in \Gamma$ be such that if $\wt{x}\in D$ then $B(\wt{x},K)\subset \bigcup_{i=1}^k \gamma_i \cdot D =: \widehat{D}$. By compactness and since $\wt{p}:\wt{M}\to \wt{M}/\wt{\cW}^c_f$ is continuous, there exists $C>0$ such that if $\wt{x},\wt{y} \in \widehat{D}$ then $dist(\wt{p}(\wt{x}),\wt{p}(\wt{y}))\leq C$. Now if $\wt{z},\wt{w}\in \wt{M}$ and $d(\wt{z},\wt{w})\leq K$, there is $\gamma$ such that $\gamma \cdot \wt{z}\in D$ and this implies that  $\gamma \cdot \wt{w}\in D$ too. We conclude that: $dist(\wt{p}(\wt{z}),\wt{p}(\wt{w}))=dist(\wt{p}(\gamma \cdot\wt{x}),\wt{p}(\gamma \cdot\wt{y})) \leq C$.
		
Item \eqref{p2properties} is very similar, we just prove the assertion with $[\wt{x}].$ Let $D$ be a compact fundamental domain and consider $\wt{p}(D)=D_1$ and let $\hat{D}$ be a compact set containing $B(D_1,K). $ By continuity and compactness there exists $C>0$ such that if $[\wt{x}],[\wt{y}]$ are in $\hat{D}$ then 	$dist([\wt{x}],[[\wt{x}],[\wt{y}]])\le C$. Now, for any $[\wt{x}]$ we have that there exists $\gamma\in\Gamma$ such that $[\gamma \cdot\wt{x}]\in D_1$. If $dist([\wt{x}],[\wt{y}])<K$ then $\gamma \cdot[\wt{x}]$ and $\gamma\cdot [\wt{y}]$ are in $\hat{D}$ and it holds too that $[\gamma \cdot \wt{x},\gamma \cdot \wt{y}]=\gamma \cdot [[\wt{x}],[\wt{y}]].$ Since $\gamma$ is an isometry on $\wt{M}_c$ we have that $dist([\wt{x}],[[\wt{x}],[\wt{y}]])\le C.$ 

Item \eqref{p3properties}. By the GPS we have that $\wt{p}:\wt{\cW}^{u}_f(\wt{x})\to \cWu_{\wt{f}_c}([\wt{x}])$ is injective. It is onto by definition. That $\wt{p}|_{\wt{\cW}^u_{\wt{f}_c([\wt{x}])}}$ is continuous is a consequence of the quotient topology. We have to prove that $\wt{p}$ is open ($\wt{p}^{-1}$ is continuous) when restricted to unstable leaves. Take $U$ an open set in $\wt{\cW}^u_f(\wt{x})$, then the  saturation of $U$ by center-stable leaves  is an open set $A$in $\wt{M}$ and moreover $U=A \cap \wt{\cW}^u_f(\wt{x})$. By the GPS we know that $A$ is also saturated by center leaves, which implies that $\wt{p}(A)$ is open in $\wt{M_c}$. Finally just observe that $\wt{p}(U)=\wt{p}(A)\cap \cW^u_{\wt{f_c}}([\wt{x}])$.


Item \eqref{p4properties} is also clear by the GPS and that points in $\cWu_{\wt{f}_c}([\wt{x}])$ approach to each other in the past and diverges to $\infty$ in the future and similar for $\cWs_{\wt{f}_c}([\wt{x}])$ interchanging past and future. 

Finally Item \eqref{p5properties} follows from the classical proof and Item \eqref{p2properties}. For the seek of completeness we are going to give the general lines. Let $K_1>0.$ We know by Item \eqref{p2properties} that there exists $C>0$ such that if $dist(\xc,\yc)<K_1$ then $[\xc,\yc]\in D^s_{\wt{f_c}}(\xc, C)\cap D^u_{\wt{f_c}}(\yc,C).$ Let $C_1>0$ such that if $dist(\xc,\yc)<K_1+C$ then $[\xc,\yc]\in D^s_{\wt{f_c}}(\xc, C_1)\cap D^u_{\wt{f_c}}(\yc,C_1).$ We may assume that $C_1>2C.$ 

Let $N>0$ be such that:
\begin{itemize}
\item $\fc^N(D^s_{\wt{f_c}}(\xc, 2C_1)\subset D^s_{\wt{f_c}}(\fc^N(\xc), C)$ 
\item $\fc^{-N}(D^u_{\wt{f_c}}(\xc,2C_1)\subset D^u_{\wt{f_c}}(\fc^{-N}(\xc), C)$
\end{itemize}
Let $\xc_{n\ge 0}$ be a $K_1$-pseudo orbit for $\fc^N.$ Define by induction $\zc_n$ as follows:
\begin{itemize}
\item $\zc_0=\xc_0$
\item $\zc_{n+1}=[\xc_{n+1},\fc^N(\zc_n)]=D^s_{\wt{f_c}}(\xc_{n+1}, C_1)\cap D^u_{\wt{f_c}}(\zc_n,C_1)$
\end{itemize}
It is not difficult to see (by induction) that for $0\le j\le n$ it holds that 
$$\fc^{-jN}(\zc_n)\in D^u_{\wt{f_c}}(\zc_{n-j}, C_1).$$
Setting $\yc_n:=\fc^{-nN}(\zc_n)$ we conclude that $dist(\fc^{jN}(\yc_n),\xc_j)\le 2C_1$ for $0\le j\le n. $ If $\yc$ is an accumulation point of $ \yc_n$ we have that $dist(\fc^{jN}(\yc),\xc_j)\le 2C_1$ for $j\ge 0.$ Now, let $K>0$ and $\xc_{k\ge 0}$ be a $K$-pseudo orbit for $\fc.$ Thus, there exists $K_1$ such that $\xc_{kN}$ is a $K_1$-pseudo orbit for $\fc^N.$ Let $\yc$ be as before, i.e., $dist(\fc^{jN}(\yc),\xc_{jN})\le 2C_1$ for $j\ge 0.$ Then, there exists $\alpha=\alpha(C_1)$ such that $dist(\fc^{j}(\yc),\xc_j)\le \alpha$ for $j\ge 0.$ We just shadowed future pseudo orbits. From this it is classical to shadow bi-infinite pseudo orbits. Since $\fc$ is infinitely expansive we get that the orbit is unique. 
\end{proof}

\begin{thm}[Stability of fibered lifted partially hyperbolic diffeomorphisms] \label{thmsta}
	Let $f:M\to M$ be a fibered lifted partially hyperbolic diffeomorphism. Then for every $g:M \to M$ such that $\sup\{dist(\wt{f}(\wt{x}),\wt{g}(\wt{x}))\}<K<\infty$ for some lift $\wt{g}:\wt{M}\to \wt{M}$, there exist a continuous and surjective map $H_g:\wt{M} \to \wt{M}_c$ and a number $\a=\a(f,K)>0$ such that:
	\begin{enumerate}
		\item $\wt{f}_c\circ H_g=H_g\circ \wt{g}$ \label{point1stability}
		\item $d_{C^0}(H_g,\wt{p})<\a$ \label{point2stability}
		\item the map $H_g$ varies continuously with $g$ in the $C^0$ topology. \label{point3stability}
		\item $H_g$ is $\Gamma$ invariant. \label{point4stability}
			\end{enumerate}
	
\end{thm}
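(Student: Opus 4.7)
The plan is to construct $H_g$ by shadowing, relying on Proposition~\ref{p.propertiesc}. Given $\wt{x}\in\wt{M}$, since $d(\wt{f},\wt{g})<K$ uniformly one gets $d(\wt{g}^{n+1}(\wt{x}),\wt{f}(\wt{g}^n(\wt{x})))<K$ for all $n$, and by item~(\ref{p1properties}) there is $C=C(K)$ so that $\mathrm{dist}([\wt{g}^{n+1}(\wt{x})],\wt{f}_c([\wt{g}^n(\wt{x})]))<C$. Hence $\{[\wt{g}^n(\wt{x})]\}_{n\in\Z}$ is a $C$-pseudo-orbit for $\wt{f}_c$, and by the global shadowing~(\ref{p5properties}) there is a unique $[\wt{y}]\in\wt{M}_c$ with $\mathrm{dist}(\wt{f}_c^n([\wt{y}]),[\wt{g}^n(\wt{x})])<\alpha$ for all $n$, for some $\alpha=\alpha(f,K)$. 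I set $H_g(\wt{x}):=[\wt{y}]$.

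Properties (1), (2) and (4) then follow formally from uniqueness of the shadow. Setting $n=0$ gives~(2). Both $\wt{f}_c(H_g(\wt{x}))$ and $H_g(\wt{g}(\wt{x}))$ shadow the shifted pseudo-orbit $\{[\wt{g}^{n+1}(\wt{x})]\}$, yielding~(1). For~(4), any $\gamma\in\Gamma$ commutes with $\wt{g}$ and $\wt{f}_c$ (both being lifts of maps on $M$) and acts isometrically on $\wt{M}_c$, so $\gamma\cdot H_g(\wt{x})$ shadows $\{\gamma\cdot[\wt{g}^n(\wt{x})]\}=\{[\wt{g}^n(\gamma\wt{x})]\}$ and must equal $H_g(\gamma\wt{x})$.

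For continuity of $H_g$ in $\wt{x}$, and then the parametric continuity~(3), I would revisit the explicit construction of the shadow in the proof of~(\ref{p5properties}). Each iterate $\wt{z}_{n+1}=[\wt{x}_{n+1},\wt{f}_c^N(\wt{z}_n)]$ depends continuously on the input pseudo-orbit by~\ref{c3hyphom}; infinite expansivity~(\ref{p4properties}) then forces the limit to be unique and therefore continuous in the pseudo-orbit. Since $\{[\wt{g}^n(\wt{x})]\}$ varies continuously in $(\wt{x},g)$ on any bounded window of indices, and the shadowing constant $\alpha$ is uniform for $g$ in a $C^0$-neighborhood of $f$, this gives simultaneously the continuity of $\wt{x}\mapsto H_g(\wt{x})$ and of $g\mapsto H_g$.

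The main obstacle is surjectivity. A clean route is to show that $H_g(\wt{M})$ is both open and closed in the connected space $\wt{M}_c$. Openness comes from a local perturbation argument in product charts: by~\ref{fph1} and~(\ref{p3properties}), a small perturbation of the shadow $[\wt{y}]=H_g(\wt{x})$ along $\cW^s_{\wt{f}_c}([\wt{y}])\times\cW^u_{\wt{f}_c}([\wt{y}])$ can be realized by moving $\wt{x}$ along the corresponding local $\wt{\cW}^s_g$ and $\wt{\cW}^u_g$ plaques and reading off the new shadow, giving a neighborhood of $[\wt{y}]$ inside the image. Closedness combines~(2) and the $\Gamma$-equivariance~(4): if $H_g(\wt{x}_n)\to[\wt{y}_\infty]$ then applying suitable $\gamma_n\in\Gamma$ keeps $\wt{p}(\wt{x}_n)$ in a fixed compact set of $\wt{M}_c$; after replacing each $\wt{x}_n$ by a representative along its $\wt{f}$-center leaf inside a compact transversal, one extracts a convergent subsequence whose limit $\wt{x}_\infty$ satisfies $H_g(\wt{x}_\infty)=[\wt{y}_\infty]$ by continuity. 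Controlling this last compactness step, and in particular that the pseudo-orbit (and thus its shadow) behaves well under the sliding along center leaves, is where I expect the analysis to require the most care.
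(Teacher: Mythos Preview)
Your shadowing construction of $H_g$ and the derivation of (1) and (2) match the paper exactly. Two points need correction.

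For (4), your claim that $\gamma\in\Gamma$ commutes with $\wt{g}$ and $\wt{f}_c$ is false: lifts of maps on $M$ are only equivariant, $\wt{g}(\gamma\cdot\wt{x})=\varphi(\gamma)\cdot\wt{g}(\wt{x})$ with $\varphi$ the automorphism of $\Gamma$ induced by $g$ on $\pi_1(M)$. The paper tracks this correctly, obtaining $G_n(\gamma\cdot\wt{x})=\varphi^n(\gamma)\cdot G_n(\wt{x})$ and $\wt{f}_c^n(\gamma\cdot[\wt{x}])=\varphi^n(\gamma)\cdot\wt{f}_c^n([\wt{x}])$; since $\Gamma$ acts by isometries on $\wt{M}_c$, the $\varphi^n(\gamma)$'s cancel in the distance estimate and uniqueness gives $H_g(\gamma\cdot\wt{x})=\gamma\cdot H_g(\wt{x})$. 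Your conclusion is right but your justification as written would fail whenever $f$ acts nontrivially on $\pi_1$, which is the generic situation here.

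Your surjectivity argument is where the real gap lies. The openness step assumes that moving $\wt{x}$ along local $\wt{\cW}^s_g$ and $\wt{\cW}^u_g$ plaques sweeps out a neighborhood of $H_g(\wt{x})$ in $\wt{M}_c$. But at this stage one only has the inclusion $H_g(\wt{\cW}^\sigma_g(\wt{x}))\subset\cW^\sigma_{\wt{f}_c}(H_g(\wt{x}))$; that this map is onto (or even locally open) is exactly the content of $\sigma$-properness, which is developed \emph{after} this theorem and in fact uses it. The paper sidesteps the issue entirely: from $d_{C^0}(H_g,\wt{p})<\alpha$ and the surjectivity of $\wt{p}$, a degree argument gives surjectivity of $H_g$ in one line. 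Your open--closed route is not only longer but circular as stated. (Similarly, for continuity of $H_g$ and for (3) the paper does not revisit the construction of the shadow: it argues directly that any accumulation point of $H_g(\wt{x}_k)$, resp.\ $H_{g'}(\wt{x})$, satisfies the same $\alpha$-shadowing inequality and invokes uniqueness via infinite expansivity; this is cleaner than tracking the inductive $\wt{z}_n$'s.)
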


\begin{proof} 
	Let $f$ be a fibered lifted partially hyperbolic diffeomorphism. Observe that if we set $H_f=\wt{p}$ we have $\wt{f}_c\circ H_f=H_f\circ \wt{f}.$ 
	
	Let $g\in \text{PH}(M)$ be such that $\sup\{dist(\wt{f}(\wt{x}),\wt{g}(\wt{x})): \wt{x}\in \wt{M}\}=K<\infty$ for some lift $\wt{g}:\wt{M} \to \wt{M}$ on the universal cover. Now for this $K>0$ we know by 	Item \ref{p1properties} of Proposition \ref{p.propertiesc} that there is $C>0$ such that, if $d(\wt{x},\wt{y})\leq K$ then $dist (\wt{p}(\wt{x}),\wt{p}(\wt{y}))\leq C$.
	
	Given a point $\wt{x}\in \wt{M}$  we define the following sequence:
	\begin{equation*}
		G_n(\wt{x}):= \wt{p}(\wt{g}^n(\wt{x}))=[\wt{g}^n(\wt{x})] 
	\end{equation*} 
	We claim that $\{G_n(\wt{x})\}_{n\in \Z}$ is a $C$-pseudo orbit with respect to $\wt{f}_c.$ First observe that:
	$$\wt{f}_c(G_n(\wt{x}))=\wt{f}_c  \circ \wt{p} (\wt{g}^n(\wt{x}))=\wt{p} \circ \wt{f}(\wt{g}^n(\wt{x}))
	$$
	Then we have that:
	\begin{eqnarray*}  dist(\wt{f}_c(G_n(\wt{x})),G_{n+1}(\wt{x})) 
			& = & dist(\wt{p} \circ \wt{f}(\wt{g}^n(\wt{x})),\wt{p}(\wt{g}^{n+1}(\wt{x})))\\
		& = & dist(\wt{p}(\wt{f} (\wt{g}^n(\wt{x}))), \wt{p} (\wt{g} (\wt{g}^n(\wt{x})))) \le C
		\end{eqnarray*} 
		 where the last inequality holds since  $d(\wt{f}(\wt{g}^n(\wt{x})),\wt{g}(\wt{g}^n(\wt{x})))\leq K$ and therefore $dist(\wt{p}(\wt{f} (\wt{g}^n(\wt{x}))), \wt{p} (\wt{g} (\wt{g}^n(\wt{x}))))\leq C$.

	We conclude that  $\{G_n(\wt{x})\}$ is a $C$-pseudo orbit with respect to $\wt{f}_c$. By the shadowing property (Item \ref{p5properties} of Proposition \ref{p.propertiesc}) we obtain a unique point  $[\wt{y}]$ such that $dist(\wt{f}^{n}_c([\wt{y}]), G_n(\wt{x}))<\a$ for every $n\in \Z$. Notice that $\a$ depends only on $f$, $C$ and $K$. Therefore the map $H_g:\wt{M} \to \wt{M}_c$ given by $H_g(\wt{x})=[\wt{y}]$ is well defined. 
	Now by definition we have: $$G_{n+1}(\wt{x})=\wt{p}(\wt{g}^{n+1}(\wt{x}))=\wt{p}(\wt{g}^n(\wt{g}(\wt{x})))=G_n(\wt{g}(\wt{x}))$$ 
	Then 
	$$dist(\wt{f}_c^n(\wt{f}_c \circ H_g)(\wt{x})),G_n(\wt{g}(\wt{x})))=dist(\wt{f}_c^{n+1}(H_g(\wt{x})),G_{n+1}(\wt{x}))<\a$$
	and the uniqueness in the shadowing property implies that
	$$H_g(\wt{g}(x))=\wt{f}_c(H_g(\wt{x})) \ \ \text{or equivalently} \ \  \wt{f}_c \circ H_g=H_g\circ \wt{g}$$
	proving Item \ref{point1stability}. By definition we have that  $dist(\wt{f}_c^n \circ H_g(\wt{x}),\wt{p}\circ \wt{g}^n(\wt{x}))<\a$, thus taking $n=0$ gives $dist(H_g(\wt{x}),\wt{p}(\wt{x}))<\a$ for every $\wt{x}\in \wt{M}$, proving Item \ref{point2stability}. 
	
	To see the continuity of $H_g$ suppose that the sequence $\{\wt{x_k}\}_{k\in \N} \subset \wt{M}$ is such that $\wt{x_k}\to \wt{x}$ as $k\to \infty$, and fix some integer $l\in\Z$. Then,
	\begin{eqnarray*}
		dist(\wt{f}_c^l(\lim_{k\to \infty} H_g(\wt{x_k})),\wt{p}(\wt{g}^{l}(\wt{x})))= dist(\wt{f}_c^l(\lim_{k\to \infty} H_g(\wt{x_k})),\wt{p}\circ \wt{g}^{l}(\lim_{k\to \infty} \wt{x_k}))\\
		=\lim_{k\to \infty} dist(\wt{f}_c^{l}(H_g(\wt{x_k})),\wt{p}(\wt{g}^{l}(\wt{x_k})))\le \a
	\end{eqnarray*}
	Since $l\in \Z$ is arbitrary, by the uniqueness of the shadowing we get $\lim_{k\to \infty} H_g(\wt{x_k})=H_g(\wt{x})$ and $H_g$ is continuous. Since $d_{C^0}(H_g,\wt{p})<\a$ by a degree argument we get that $H_g$ is surjective.
	
	To prove the continuous variation with respect to $g$, take some $\e>0$ and fix some large $N_0 \in \N$ such that if $dist([\wt{x}],[\wt{y}])<\e$ then $dist(\wt{f}_c^{N_0}([\wt{x}]), \wt{f}_c^{N_0}([\wt{y}])>2\a +C$ or $dist(\wt{f}_c^{-N_0}([\wt{x}]), \wt{f}_c^{-N_0}([\wt{y}])>2\a +C$. We can always find such $N_0$ since $\wt{f}_c$ is infinitely expansive by Item \ref{p4properties} of Prop \ref{p.propertiesc}. Let $\cU(g)$ be the $C^{0}$ neighbourhood of $g$ s.t. for every $g'\in \cU(g)$, $\wt{x}\in \wt{M}$ and $\abs{j}\leq N_{0}$ we have $d(\wt{g}^j(\wt{x}),\wt{g'}^j(\wt{x}))<K$. Now take $g'\in \cU(g)$, $\wt{x}\in \wt{M}$ and $\abs{j}\leq N_0$:
	\begin{eqnarray*}
		dist(\wt{f}_c^{j}(H_g(\wt{x})),\wt{f}_c^j(H_{g'}(\wt{x}))) &\leq& dist(\wt{f}_c^{j}(H_g(\wt{x})),\wt{p}(\wt{g}^j(\wt{x}))) \\ &+& dist(\wt{p}(\wt{g}^j(\wt{x})),\wt{p}(\wt{g'}^j(\wt{x})))\\
		&+& dist(\wt{p}(\wt{g'}^j(\wt{x})), \wt{f}_c^j(H_{g'}(\wt{x}))) \\
		&\leq& \a +C +\a = 2\a+C
	\end{eqnarray*}	
	where the first and third terms of the inequalities come from the shadowing property, and the second one since $d(\wt{g}^j(\wt{x}),\wt{g'}^j(\wt{x}))<K$. This implies $dist(H_g(\wt{x}),H_{g'}(\wt{x}))<\e$ by the above condition and therefore we get Item \ref{point3stability}. 
	
	To finish the proof we have to prove that $H_g$ is $\Gamma$-invariant. Recall that by definition we have $[\wt{x}]=\wt{p}(\wt{x})$ and $\gamma \cdot [\wt{x}]=[\gamma \cdot \wt{x}]$. First notice that if we call $\varphi:\Gamma\to \Gamma$ the induced map of $\wt{f}$ in the fundamental group, we get that for every $\gamma\in \Gamma$ and every $\wt{x}\in \wt{M}$:
	$$\wt{f}(\gamma \cdot \wt{x})=\varphi(\gamma)\cdot \wt{f}(\wt{x})$$ and the same happens for every $g$ as in the hypothesys: $\wt{g}(\gamma \cdot \wt{x})=\varphi(\gamma)\cdot \wt{g}(\wt{x})$. By induction we get that $\wt{f}^n(\gamma \cdot \wt{x})=\varphi^n(\gamma)\cdot \wt{f}^n(\wt{x})$. 
	In a similar way we have:
	\begin{eqnarray*}
		\wt{f}_c(\gamma \cdot [\wt{x}])&=& \wt{f}_c([\gamma \cdot \wt{x}])= \wt{f}_c (\wt{p}(\gamma \cdot \wt{x}))=\wt{p} \circ \wt{f}(\gamma \cdot \wt{x}) \\
		&=& \wt{p} (\varphi(\gamma)\cdot \wt{f}(\wt{x}))=\varphi(\gamma)\cdot  \wt{p}(\wt{f}(\wt{x}))   \\
		&=& \varphi(\gamma)\cdot \wt{f}_c \circ\wt{p}(\wt{x}) = \varphi(\gamma)\cdot \wt{f}_c([\wt{x}])
	\end{eqnarray*} By induction we get that $\wt{f}_c^n(\gamma \cdot [\wt{x}])=\varphi^n(\gamma)\cdot \wt{f}_c^n([\wt{x}])$. 
	Finally just observe that:
	\begin{eqnarray*} G_n(\gamma \cdot \wt{x})&=&\wt{p}(\wt{g}^n(\gamma \cdot \wt{x}))=\wt{p}(\varphi^n(\gamma)\cdot \wt{g}^n(\wt{x})) \\
		&=& \varphi^n(\gamma)\cdot \wt{p}(\wt{g}^n(\wt{x})) = \varphi^n(\gamma)\cdot G_n(\wt{x}) 
	\end{eqnarray*}  
	To sum up, for every $\gamma\in \Gamma$, $\wt{x}\in \wt{M}$ and $n\in \Z$ we have 
	\begin{eqnarray*}
		dist(\wt{f}_c^n(\gamma \cdot H_g(\wt{x})),G_n(\gamma \cdot \wt{x})) &=& dist(\varphi^n(\gamma)\cdot \wt{f}_c^n(H_g(\wt{x})),\varphi^n(\gamma)\cdot G_n(\wt{x}))\\
		&=&  dist(\wt{f}_c^n(H_g(\wt{x})),G_n(\wt{x}))< \a
	\end{eqnarray*} since the action by $\Gamma$ preserves the distance. By uniqueness of the shadowing property, we get that $H_g(\gamma \cdot \wt{x})=\gamma \cdot H_g(\wt{x})$, proving Item \ref{point4stability}.
\end{proof}
\begin{rem}
	As we mentioned above, in case $g=f$ if we set $H_f=\wt{p}$ we have $\wt{f}_c\circ H_f=H_f\circ \wt{f}$. In many parts of the article we will note $H_f$ instead of $\wt{p}$. 
\end{rem}

\begin{rem} \label{remisotopic}
	If $g\in \text{PH}(M)$ is isotopic to $f$ and we take a lift $\wt{g}$, then we always have that $$\sup \{d(\wt{f}(\wt{x}),\wt{g}(\wt{x})): \wt{x}\in \wt{M}\}<K<\infty
	$$
	therefore Theorem \ref{thmsta} applies and we get the map $H_g$.   
\end{rem}

\subsection{Sufficient conditions}

In section \ref{subsecexamples} we gave  some natural examples of fibered lifted partially hyperbolic diffeomorphisms and it was simple to verify the definition. In this section we will give some simple conditions to verify that a partially hyperbolic diffeomorphism is a fibered lifted one as we defined. Recall that $\wt{M}$ has a Riemannian metric induced by the metric on $M$. This metric also induces a Riemannian metric on each stable and unstable leaf of $\wt{f}$. We denote by
$$\wt{\cW}^{\sigma}_f(\wt{x}, R):=\{\wt{y}\in\wt{\cW}^{\sigma}_f(\wt{x}): d_{\wt{\cW}^{\sigma}_f}(\wt{x},\wt{y})<R\},\; \sigma=s,u$$
where $d_{\wt{\cW}^{\sigma}_f}$ is the leafwise distance. Let's start with a simple lemma.

\begin{lem}\label{l.prod}
Let $f:M\to M$ be a dynamically coherent partially hyperbolic diffeomorphism such that for the universal cover $\wt{f}:\wt{M}\to\wt{M}$ we have GPS as in \ref{fph1}. Then, given $K>0$ there exists $R>0$ such that 
for any $\wt{x}\in\wt{M}$ it holds
$$B(\wt{\cW}^{cs}_f(\wt{x}),K)\subset\bigcup_{\wt{y}\in \wt{\cW}^{cs}_f(\wt{x})}\wt{\cW}^{u}_f(\wt{y},R),\;\mbox{ and }\;B(\wt{\cW}^{cu}_f(\wt{x}),K)\subset\bigcup_{\wt{y}\in \wt{\cW}^{cu}_f(\wt{x})}\wt{\cW}^{s}_f(\wt{y},R).$$
In particular if $\wt{y}\in\wt{\cW}^s_f(\wt{x})$ then $d(\wt{f}^{-n}(\wt{y}),\wt{f}^{-n}(\wt{x}))\to_{n\to+\infty}\infty.$ The same for unstable leaves in the future.
\end{lem}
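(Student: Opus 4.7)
The plan is to use \ref{fph1} to exhibit, for every $\wt z\in B(\wt{\cW}^{cs}_f(\wt x),K)$, a well-defined $\wt y\in\wt{\cW}^{cs}_f(\wt x)$ with $\wt z\in\wt{\cW}^u_f(\wt y)$, and then to bound the leafwise $u$-distance $d_{\wt{\cW}^u_f}(\wt z,\wt y)$ uniformly in terms of $K$ alone. Given $\wt z$ and any $\wt w\in\wt{\cW}^{cs}_f(\wt x)$ with $d(\wt z,\wt w)\le K$, global product structure provides a single intersection point $\wt y(\wt z,\wt w):=\wt{\cW}^{cs}_f(\wt w)\cap\wt{\cW}^u_f(\wt z)$ which lies in $\wt{\cW}^{cs}_f(\wt x)$ and depends continuously on $(\wt z,\wt w)$. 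It thus suffices to bound the function $\Phi(\wt z,\wt w):=d_{\wt{\cW}^u_f}(\wt z,\wt y(\wt z,\wt w))$ on the set $\{d(\wt z,\wt w)\le K\}$.

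To pass from local to uniform control I would invoke deck-transformation invariance. Since $f$ descends to $M$, the foliations $\wt{\cW}^{cs}_f$ and $\wt{\cW}^u_f$ are preserved by $\Gamma$, and $\Gamma$ acts by isometries on $\wt M$, so $\Phi(\gamma\cdot\wt z,\gamma\cdot\wt w)=\Phi(\wt z,\wt w)$. Fixing a compact fundamental domain $D\subset\wt M$, it is enough to bound $\Phi$ on the compact set $C=\{(\wt z,\wt w)\in D\times\wt M:d(\wt z,\wt w)\le K\}$. I will show $\Phi$ is upper semicontinuous on $C$: if $(\wt z_n,\wt w_n)\to(\wt z^*,\wt w^*)$, continuity of the GPS intersection gives $\wt y_n\to\wt y^*$ in the ambient metric; covering a rectifiable path from $\wt z^*$ to $\wt y^*$ inside $\wt{\cW}^u_f(\wt z^*)$ by finitely many foliation charts and using the standard $C^0$-continuity of strong unstable plaques, for $n$ large the corresponding path from $\wt z_n$ to $\wt y_n$ in $\wt{\cW}^u_f(\wt z_n)$ lies in the same chain of charts and has length arbitrarily close to $d_{\wt{\cW}^u_f}(\wt z^*,\wt y^*)$. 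An upper semicontinuous function on a compact set attains a finite supremum, yielding the required $R$. The inclusion for $\wt{\cW}^{cu}_f$ and $\wt{\cW}^s_f$ is symmetric.

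For the ``in particular'' statement, assume for contradiction $\wt y\in\wt{\cW}^s_f(\wt x)$ and that there exist $K>0$ and $n_k\to+\infty$ with $d(\wt f^{-n_k}(\wt y),\wt f^{-n_k}(\wt x))\le K$. Writing $\wt x_k=\wt f^{-n_k}(\wt x)$ and $\wt y_k=\wt f^{-n_k}(\wt y)$, the $cu$/$s$ version of the inclusion just proved supplies $R=R(K)$ and points $\wt v_k\in\wt{\cW}^{cu}_f(\wt x_k)$ with $\wt y_k\in\wt{\cW}^s_f(\wt v_k,R)$. Since $\wt y_k\in\wt{\cW}^s_f(\wt x_k)$ the two strong stable leaves coincide, and by GPS uniqueness the intersection of this common leaf with $\wt{\cW}^{cu}_f(\wt x_k)$ is the single point $\wt x_k$, forcing $\wt v_k=\wt x_k$. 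Hence $d_{\wt{\cW}^s_f}(\wt x_k,\wt y_k)\le R$ for all $k$, contradicting the exponential expansion of leafwise stable distances under backward iteration coming from partial hyperbolicity.

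The main technical obstacle is the upper semicontinuity of $\Phi$: it is where one actually needs to translate ambient convergence of the endpoints into a comparison of leafwise lengths, and it is the place where the standard continuous dependence of strong unstable plaques on their basepoint enters. Once that is established, the remaining steps are bookkeeping with the GPS hypothesis \ref{fph1}, $\Gamma$-equivariance and compactness of the fundamental domain.
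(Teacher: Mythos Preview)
Your argument is correct and follows essentially the same route as the paper: reduce to a compact fundamental domain via $\Gamma$-equivariance and then use continuity of the foliations together with the GPS intersection. The only stylistic difference is that the paper argues by contradiction (if no uniform $R$ exists, take $R_n\to\infty$, translate so the nearby pairs $(\wt y_n,\wt z_n)$ lie in a fixed compact set, pass to a limit $(\wt y,\wt z)$, and conclude $\wt{\cW}^u_f(\wt y)\cap\wt{\cW}^{cs}_f(\wt z)=\emptyset$, contradicting GPS), which lets it avoid your explicit upper-semicontinuity step for $\Phi$; your ``in particular'' argument is identical to the paper's.
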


\begin{proof}
We will just prove for the center-stable manifolds, the other one is similar. Otherwise, there exist $K>0$, $R_n\to \infty$ and $\wt{x}_n,\wt{y}_n$ such that $dist(\wt{y}_n,\wt{\cW}^{cs}_f(\wt{x}_n))<K$ and $\wt{\cW}^u_f(\wt{y}_n,R_n)\cap \wt{\cW}^{cs}_f(\wt{x}_n)=\emptyset.$ Let $\wt{z}_n\in \wt{\cW}^{cs}_f(\wt{x}_n)$ be such that $dist(\wt{y}_n,\wt{z}_n)<K.$ Translating by an appropriate cover transformation $\gamma_n$ we may assume that $\wt{z}_n$ belongs to a fundamental domain, and (taking subsequence if necessary) we may assume that $\wt{y}_n,\wt{z}_n$ converge to $\wt{y},\wt{z}$ respectively.
It follows that $\wt{\cW}^u_f(\wt{y})\cap \wt{\cW}^{cs}_f(\wt{z})=\emptyset$ contradicting the Global Product Structure.

The last statement follows immediately. Indeed, $d_{\wt{\cW}^s_f}(\wt{f}^{-n}(\wt{y}),\wt{f}^{-n}(\wt{x}))\to_{n\to+\infty}\infty$ and if the ambient distance does not goes to infinity, there exist $K$ and $n_j\to\infty$ such that $d(\wt{f}^{-n_j}(\wt{y}),\wt{f}^{-n_j}(\wt{x}))<K$ for any $j$, implying that $\wt{f}^{-n_j}(\wt{y})\in \wt{\cW}^s_f(\wt{f}^{-n_j}(\wt{x}),R)$ for some $R>0,$ a contradiction.

\end{proof}

The next proposition gives conditions to a dynamical coherent partially hyperbolic diffeomorphism to be fibered lifted.

\begin{prop}\label{p.suficient}
Let $f:M\to M$ be a dynamically coherent partially hyperbolic diffeomorphism and let $\wt{f}:\wt{M}\to \wt{M}$ be its lift. Then, $f$ is fibered lifted provided:

\begin{enumerate}
\item  the foliations $\wt{\cW}^{cs}_f$ and $\wt{\cW}^{u}_f$ have global product structure; \\ the foliations $\wt{\cW}^{cu}_f$ and $\wt{\cW}^{s}_f$ have global product structure. 

\item Let $\wt{x}\in M$. Then there exists a center foliated neighborhood $U$ of $\wt{x}$ such that if $\wt{y}\in U$ then there is just one connected component of $\wt{\cW}^c_f(\wt{y}) \cap U$ and it is the plaque through $\wt{y}$.

\item Given any two center leaves $\wt{\cW}^c_f(\wt{x})$ and $\wt{\cW}^c_f(\wt{y})$ then, there exist $\delta>0$ and $K>0$ such that:
$$ d_H(\wt{\cW}^c_f(\wt{x}),\wt{\cW}^c_f(\wt{y}))<K \mbox{ and }B(\wt{\cW}^c_f(\wt{x}),\delta)\cap \wt{\cW}^c_f(\wt{y})=\emptyset.$$

\end{enumerate}

\end{prop}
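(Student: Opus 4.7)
The plan is to verify the three conditions \ref{fph1}, \ref{fph2}, and \ref{fph3} of Definition \ref{dffph} in turn, each relying on the corresponding hypothesis (1), (2), or (3) and their combinations. Condition \ref{fph1} is literally hypothesis (1), so there is nothing to do.

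For \ref{fph2}, the finiteness of $d_H(\wt{\cW}^c_f(\wt{x}),\wt{\cW}^c_f(\wt{y}))$ is the first clause of hypothesis (3), so $dist$ is well-defined; symmetry and the triangle inequality are standard for the Hausdorff distance of closed sets, and positivity on distinct classes follows from the $\delta$-separation clause of (3). To check that $dist$ induces the quotient topology I prove that $\wt{p}:\wt{M}\to(\wt{M}_c,dist)$ is both continuous and open. Openness is immediate from hypothesis (2): saturations by the local center plaques of $U$ are open saturated sets in $\wt{M}$. Continuity at $\wt{x}$ combines hypothesis (2), which gives that plaques of nearby points are Hausdorff-close inside $U$, with the global product structure from (1), used to holonomy-transfer this closeness along unstable/stable leaves to the full center leaves. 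The $\Gamma$-equivariance of $\wt{p}$ ($\Gamma$ acts by isometries on both $\wt{M}$ and $\wt{M}_c$) then promotes continuity to uniform continuity on bounded distances, providing a modulus $\omega$ with $\omega(0^+)=0$.

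For \ref{fph3} I first identify the dynamically defined stable (resp.\ unstable) set of $\wt{f}_c$ at $[\wt{x}]$ with $\wt{p}(\wt{\cW}^s_f(\wt{x}))$ (resp.\ $\wt{p}(\wt{\cW}^u_f(\wt{x}))$). The inclusion $\wt{p}(\wt{\cW}^s_f(\wt{x}))\subseteq\cW^s_{\wt{f}_c}([\wt{x}])$ is immediate from the exponential contraction of stable leaves under $f$ combined with the uniform continuity of $\wt{p}$. For the reverse, given $[\wt{y}]\in\cW^s_{\wt{f}_c}([\wt{x}])$ I set $\wt{z}:=\wt{\cW}^s_f(\wt{x})\cap\wt{\cW}^{cu}_f(\wt{y})$ (well defined by (1)) and apply Remark \ref{r.pi} inside $\wt{\cW}^{cu}_f(\wt{y})$ to get $\wt{w}\in\wt{\cW}^c_f(\wt{z})\cap\wt{\cW}^u_f(\wt{y})$; then $[\wt{z}]=[\wt{w}]\in\wt{p}(\wt{\cW}^u_f(\wt{y}))$, and comparing forward/backward asymptotics of $[\wt{z}]$ and $[\wt{y}]$ under $\wt{f}_c$ forces $[\wt{z}]=[\wt{y}]$ by the shrinking argument below. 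With this identification, the three axioms of Definition \ref{dfhyphom} follow. Axiom (3) (GPS for $\wt{f}_c$) is inherited from (1): for any $[\wt{x}],[\wt{y}]$ the class of $\wt{\cW}^s_f(\wt{x})\cap\wt{\cW}^{cu}_f(\wt{y})$ is the unique intersection of $\cW^s_{\wt{f}_c}([\wt{x}])$ with $\cW^u_{\wt{f}_c}([\wt{y}])$ (uniqueness via Remark \ref{r.pi}), with continuity of the bracket inherited from continuity of GPS in $\wt{M}$. Axiom (1) (uniform shrinking of stable/unstable disks) follows from the exponential contraction of $\wt{f}$ along $\wt{\cW}^{s/u}_f$ together with uniform continuity of $\wt{p}$ and its quantitative inverse on stable leaves. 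Finally, Axiom (2) is a formal consequence of Axiom (1): if $[\wt{x}]\neq[\wt{y}]\in\cW^s_{\wt{f}_c}([\wt{x}])$ had $dist(\wt{f}_c^{-n_k}([\wt{x}]),\wt{f}_c^{-n_k}([\wt{y}]))\leq K$ along a subsequence $n_k=j_k N\to\infty$, iterating the shrinking $j_k$ times would force $[\wt{y}]\in D^s([\wt{x}],(K+1)/2^{j_k})$ and hence $dist([\wt{x}],[\wt{y}])\to 0$, contradicting positivity.

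The main technical obstacle is the quantitative inverse estimate for $\wt{p}$ restricted to a stable leaf---namely, that if $dist([\wt{x}],[\wt{y}])<r$ with $[\wt{y}]\in\cW^s_{\wt{f}_c}([\wt{x}])$ then some representative $\wt{y}^*\in\wt{\cW}^s_f(\wt{x})$ satisfies $d_{\wt{\cW}^s_f}(\wt{x},\wt{y}^*)\leq C(r)$ with $C(r)/r$ uniformly bounded---needed to ensure Axiom (1) holds with an $N$ independent of the disk radius. Because $\Gamma$ does not act on any single stable leaf, uniformity cannot come directly from cocompactness; instead one exploits the $\wt{f}$-equivariance of $\wt{p}$ to iterate any large stable disk down to a bounded size, where hypothesis (2) and cocompactness yield a uniform local estimate, and then the contraction rate of $\wt{f}$ on $\wt{\cW}^s_f$ transports this back to give the required $N$.
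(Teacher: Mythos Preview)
Your overall architecture matches the paper's: verify \ref{fph1}, \ref{fph2}, \ref{fph3} in turn, with \ref{fph1} immediate and \ref{fph2} coming from hypotheses (2)--(3) plus $\Gamma$-cocompactness. Your treatment of \ref{fph2} is more explicit than the paper's, which simply asserts that (2) and (3) make $\wt{M}_c$ a topological manifold with compatible metric; and your extra care in checking that $\wt{p}(\wt{\cW}^\sigma_f(\wt{x}))$ really coincides with the dynamically defined stable/unstable set is something the paper omits entirely (it just takes the ``:='' at face value).

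The one substantive divergence is in how Axioms (1) and (2) of Definition~\ref{dfhyphom} are obtained. For Axiom~(2) (divergence along stable sets under backward iteration) the paper does \emph{not} deduce it from Axiom~(1); instead it invokes Lemma~\ref{l.prod}, which says that a $K$-neighborhood of $\wt{\cW}^{cu}_f(\wt{x})$ is contained in a union of stable balls of uniform leafwise radius $R=R(K)$. This lemma is also exactly the ``quantitative inverse estimate'' you spend your last paragraph worrying about: if $dist([\wt{x}],[\wt{y}])<r$ with $\wt{y}\in\wt{\cW}^s_f(\wt{x})$, then $\wt{y}\in B(\wt{\cW}^{cu}_f(\wt{x}),r)$, and Lemma~\ref{l.prod} gives $d_{\wt{\cW}^s_f}(\wt{x},\wt{y})<R(r)$ directly---no need for your equivariance/iteration workaround. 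Your route (Axiom~(1) $\Rightarrow$ Axiom~(2) formally) is correct and arguably tidier for that implication, but by not isolating Lemma~\ref{l.prod} you end up re-deriving its content in a more roundabout way when you attack Axiom~(1). In short: both approaches work, but the paper's use of Lemma~\ref{l.prod} packages the geometric input once and reuses it, whereas you trade a clean formal deduction of Axiom~(2) for a harder-than-necessary fight with Axiom~(1). A minor point: in your Axiom~(2) argument the bounded subsequence need not consist of multiples of $N$, but this is easily repaired by first reducing modulo $N$.
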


\begin{proof}
We have to check properties \ref{fph1}, \ref{fph2} and \ref{fph3} of Definition \ref{dffph}. First notice that Item (1) is exactly \ref{fph1}. Conditions (2) and (3) naturally show that $\wt{M}_c:=\wt{M}/\wt{\cW}^c_f$ is a topological manifold and the Hausdorff distance between center leaves leads to a distance in $\wt{M}_c$ compatible with the topology proving \ref{fph2}. We have to check \ref{fph3}, that is, the quotient map $\wt{f_c}:\wt{M}_c\to\wt{M}_c$ is globally uniformly hyperbolic (Definition \ref{dfhyphom}) with
$$\cWs_{\wt{f}_c}([\wt{x}]):=\wt{p}(\wt{\cW}^{s}_f(\wt{x}))\;\;\mbox{ and }\;\;\;\cWu_{\wt{f}_c}([\wt{x}]):=\wt{p}(\wt{\cW}^{u}_f(\wt{x}))	$$
Clearly, item (1) implies the GPS in $\wt{M}_c$ between $\cWs_{\wt{f}_c}$ and $\cWu_{\wt{f}_c}$ proving condition \ref{c3hyphom} of Definition \ref{dfhyphom}. Item (3) also implies that given $r>0$ there exists $c>0$ such that if $\wt{x},\wt{y}\in\wt{M}$, $\wt{y}\in\wt{\cW}^{\sigma}_f(\wt{x}), \sigma=s,u$ and 
$d(\wt{x},\wt{y})<r$ then $d_H(\wt{\cW}^c_f(\wt{x}),\wt{\cW}^c_f(\wt{y}))<c.$ Condition \ref{c1hyphom} of Definition \ref{dfhyphom} follows directly. The previous lemma and item (3) yield that given $K_1$ there exist $R$ and  $R_1$ such that 
$$B(\cW^{s}_{\wt{f}_c}(\xc),K_1)\subset \wt{p}\left(\bigcup_{\wt{y}\in \wt{\cW}^{cs}_f(\wt{x})}\wt{\cW}^{u}_f(\wt{y},R)\right)\subset\bigcup_{\yc\in \cW^{s}_{\wt{f}_c}(\xc)}D^{u}_{\wt{f}_c}(\yc,R_1)$$
$$B(\cW^{u}_{\wt{f}_c}(\xc),K_1)\subset  \wt{p}\left(\bigcup_{\wt{y}\in \wt{\cW}^{cu}_f(\wt{x})}\wt{\cW}^{s}_f(\wt{y},R)\right)\subset\bigcup_{\yc\in \cW^{u}_{\wt{f}_c}(\xc)}D^{s}_{\wt{f}_c}(\yc,R_1)$$
These imply that condition \ref{c2hyphom} of Definition \ref{dfhyphom} is satisfied as well, since $\wt{p}:\wt{\cW}^\sigma_f(\wt{x})\to \cW^\sigma_{\wt{f}_c}(\xc)$ is a homeomorphism for any $\wt{x}.$ This finish the proof. 
\end{proof}

The next Corollary is a consequence of the above lemma and proposition and our definitions. It will be important to have in mind in Section \ref{sectionintegrability}. 

\begin{cor}\label{c.pre}
Let $f:M\to M$ be a fibered lifted partially hyperbolic diffeomorphism. Then, the  following hold:

\begin{enumerate}

\item Given $K_1$ there exist $R$ and $R_1$ such that 
$$B(\cW^{s}_{\wt{f}_c}(\xc),K_1)\subset \wt{p}\left(\bigcup_{\wt{y}\in \wt{\cW}^{cs}_f(\wt{x})}\wt{\cW}^{u}_f(\wt{y},R)\right)\subset\bigcup_{\yc\in \cW^{s}_{\wt{f}_c}(\xc)}D^{u}_{\wt{f}_c}(\yc,R_1)$$
$$B(\cW^{u}_{\wt{f}_c}(\xc),K_1)\subset  \wt{p}\left(\bigcup_{\wt{y}\in \wt{\cW}^{cu}_f(\wt{x})}\wt{\cW}^{s}_f(\wt{y},R)\right)\subset\bigcup_{\yc\in \cW^{u}_{\wt{f}_c}(\xc)}D^{s}_{\wt{f}_c}(\yc,R_1)$$

\item Given $r>0$ there exists $r_1$ such that 
$$\wt{p}^{-1}(D^\sigma_{\wt{f_c}}(\xc,r))\cap \wt{\cW}^\sigma_f(\wt{y})\subset \wt{\cW}^\sigma_f(\wt{y},r_1) \text{ for any }\wt{y} \text{ with }\yc=\xc, \sigma=s,u.$$ 

\item Given $K>0$ and $ \varepsilon>0$ there exists $R_2>0$ such that if $[\wt{x}], [\wt{y}]$ are in $B(\cW^s_{\wt{f}_c}([\wt{z}]),K)$ and $dist([\wt{x}], [[\wt{x}],[\wt{y}]])<\varepsilon$ then $[[\wt{x}],[\wt{y}]]\in D^u_{\wt{f_c}}([\wt{y}],R_2).$ 
\end{enumerate}
\end{cor}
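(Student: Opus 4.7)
My plan is to prove the three items in order, leveraging Lemma~\ref{l.prod}, Remark~\ref{r.pi}, and Proposition~\ref{p.propertiesc}. For item (1), given $K_1>0$ I apply Lemma~\ref{l.prod} with $K=K_1$ to produce $R>0$ such that $B(\wt{\cW}^{cs}_f(\wt{x}),K_1)\subset\bigcup_{\wt{y}\in\wt{\cW}^{cs}_f(\wt{x})}\wt{\cW}^u_f(\wt{y},R)$. Now take $\zc\in B(\cW^s_{\wt{f_c}}(\xc),K_1)$; unfolding the quotient distance as a Hausdorff distance between center leaves, I obtain $\wt{z}'\in\wt{\cW}^c_f(\wt{z})$ with $d(\wt{z}',\wt{\cW}^{cs}_f(\wt{x}))<K_1$, and Lemma~\ref{l.prod} then supplies some $\wt{y}\in\wt{\cW}^{cs}_f(\wt{x})$ with $\wt{z}'\in\wt{\cW}^u_f(\wt{y},R)$; projecting by $\wt{p}$ yields the first inclusion. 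For the second inclusion, Remark~\ref{r.pi} ensures that $\wt{p}(\wt{y})\in\cW^s_{\wt{f_c}}(\xc)$ whenever $\wt{y}\in\wt{\cW}^{cs}_f(\wt{x})$, and item (1) of Proposition~\ref{p.propertiesc} upgrades the leafwise ball $\wt{\cW}^u_f(\wt{y},R)$ in $\wt{M}$ to a quotient disk $D^u_{\wt{f_c}}(\wt{p}(\wt{y}),R_1)$. The center-unstable case is symmetric.

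For item (2), I would first establish a local uniform version: there exist $\epsilon_0',\epsilon_0>0$ such that for every $\wt{y}\in\wt{M}$,
\[
\wt{p}^{-1}(D^u_{\wt{f_c}}(\wt{p}(\wt{y}),\epsilon_0'))\cap\wt{\cW}^u_f(\wt{y})\subset\wt{\cW}^u_f(\wt{y},\epsilon_0).
\]
This is obtained by exploiting the $\Gamma$-action (which permutes unstable leaves, acts by isometries, and commutes with $\wt{p}$) to translate $\wt{y}$ into a fixed compact fundamental domain, and then invoking continuity of $\wt{\cW}^u_f$ together with the fact that $\wt{p}$ restricted to each unstable leaf is a homeomorphism (item (3) of Proposition~\ref{p.propertiesc}). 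To pass from scale $\epsilon_0'$ to an arbitrary $r>0$, I pick $N$ using condition~\ref{c1hyphom} of Definition~\ref{dfhyphom} so that $\wt{f_c}^{-N}(D^u_{\wt{f_c}}(\xc,r))\subset D^u_{\wt{f_c}}(\wt{f_c}^{-N}\xc,\epsilon_0')$; since $\wt{f}$ preserves unstable leaves and commutes with $\wt{p}$, applying $\wt{f}^{-N}$ reduces to the local case at $\wt{f}^{-N}\wt{y}$, and pushing forward by $\wt{f}^N$ gives $r_1:=\|D\wt{f}\|^{N}\epsilon_0$, a bound that is finite because $f$ lives on the compact manifold $M$. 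The case $\sigma=s$ is identical upon reversing time.

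For item (3), I use item (1) twice. With $K_1=K$ we obtain $R_1>0$ and $\zeta_y\in\cW^s_{\wt{f_c}}([\wt{z}])$ such that $[\wt{y}]\in D^u_{\wt{f_c}}(\zeta_y,R_1)$. The point $[[\wt{x}],[\wt{y}]]$ lies on $\cW^s_{\wt{f_c}}([\wt{x}])$ within distance $\varepsilon$ of $[\wt{x}]$, hence within distance $K+\varepsilon$ of $\cW^s_{\wt{f_c}}([\wt{z}])$, so item (1) with $K_1=K+\varepsilon$ yields $R_1'>0$ and $\zeta\in\cW^s_{\wt{f_c}}([\wt{z}])$ with $[[\wt{x}],[\wt{y}]]\in D^u_{\wt{f_c}}(\zeta,R_1')$. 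Both $\zeta$ and $\zeta_y$ lie in $\cW^u_{\wt{f_c}}([\wt{y}])\cap\cW^s_{\wt{f_c}}([\wt{z}])$, which consists of a single point by the GPS of $\wt{f_c}$ (condition~\ref{c3hyphom} of Definition~\ref{dfhyphom}), so $\zeta=\zeta_y$ and the unstable-leaf triangle inequality gives $R_2:=R_1+R_1'$.

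The main obstacle I expect is item (2): items (1) and (3) are essentially bookkeeping with Lemma~\ref{l.prod} and the global product structure of $\wt{f_c}$, but item (2) requires a genuine uniform estimate on the inverse of the homeomorphism $\wt{p}|_{\wt{\cW}^u_f(\wt{y})}$ between two non-compact spaces. The interplay between local compactness obtained through $\Gamma$-fundamental domains and the hyperbolic iteration on the quotient has to be combined with care, since uniformity is required simultaneously in the initial point $\wt{y}$ and in the ambient scale $r$; the exponential contraction of $\wt{f_c}^{-n}$ on unstable disks provided by Definition~\ref{dfhyphom} is precisely what converts the local estimate into a global one.
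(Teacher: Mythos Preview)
Your treatment of items (1) and (3) matches the paper's proof essentially verbatim: item (1) via Lemma~\ref{l.prod} plus the Hausdorff-distance definition of $dist$, and item (3) by applying item (1) with $K_1=K+\varepsilon$ and using the GPS of $\wt{f}_c$ to force the two base points on $\cW^s_{\wt{f}_c}([\wt{z}])$ to coincide, then a triangle inequality on the common unstable leaf.

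The genuine difference is item (2). The paper obtains it in one stroke, again from Lemma~\ref{l.prod} and GPS: if $\wt{z}\in\wt{\cW}^u_f(\wt{y})$ with $[\wt{y}]=[\wt{x}]$ satisfies $\wt{p}(\wt{z})\in D^u_{\wt{f}_c}(\xc,r)$, then $d_H(\wt{\cW}^c_f(\wt{z}),\wt{\cW}^c_f(\wt{x}))<r$, so in particular $\wt{z}$ itself lies in $B(\wt{\cW}^{cs}_f(\wt{x}),r)$; Lemma~\ref{l.prod} then gives $\wt{w}\in\wt{\cW}^{cs}_f(\wt{x})$ with $\wt{z}\in\wt{\cW}^u_f(\wt{w},R)$, and the global product structure between $\wt{\cW}^u_f$ and $\wt{\cW}^{cs}_f$ forces $\wt{w}=\wt{y}$, so $r_1=R$ works. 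Your route---a uniform local estimate via $\Gamma$-cocompactness, then bootstrapping to arbitrary $r$ through the backward contraction of $\wt{f}_c$ on unstable disks---is correct in outline and has the conceptual advantage of not using that $dist$ is literally the Hausdorff metric (it would go through for any compatible metric on $\wt{M}_c$ satisfying Definition~\ref{dfhyphom}). However it is considerably longer, and the local step is more delicate than you suggest: a leafwise homeomorphism $\wt{p}|_{\wt{\cW}^u_f(\wt{y})}$ gives no a~priori uniform modulus of continuity for its inverse, so the compactness argument must really establish upper semicontinuity of $\wt{y}\mapsto\mathrm{diam}\bigl(\wt{p}^{-1}(D^u_{\wt{f}_c}(\wt{p}(\wt{y}),\epsilon_0'))\cap\wt{\cW}^u_f(\wt{y})\bigr)$, which in turn needs the continuous dependence of the inverse homeomorphisms on $\wt{y}$ (obtainable from GPS and continuity of the foliation, but not immediate). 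The paper's argument bypasses all of this by exploiting the Hausdorff-metric definition directly.
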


\begin{proof}

Notice that (1) is a consequence of Lemma \ref{l.prod} as we have seen at the end of last proposition. Item (2) is a consequence of a succesive aplication of (1) since given $R_1$ there exist $K_2$ such that 
$$\bigcup_{\yc\in \cW^{s}_{\wt{f}_c}(\xc)}D^{u}_{\wt{f}_c}(\yc,R_1)\subset B(\cW^{s}_{\wt{f}_c}(\xc),K_2)$$
$$\bigcup_{\yc\in \cW^{u}_{\wt{f}_c}(\xc)}D^{s}_{\wt{f}_c}(\yc,R_1)\subset B(\cW^{u}_{\wt{f}_c}(\xc),K_2)$$
Item (3) is also a consequence of (1) since there exists $K_1>K$, $R$ and $R_1$ such that $$B(\xc,\varepsilon)\subset B(\cW^s_{\wt{f}_c}([\wt{z}]),K_1)\subset \wt{p}\left(\bigcup_{\wt{w}\in \wt{\cW}^{cs}_f(\wt{z})}\wt{\cW}^{u}_f(\wt{w},R)\right)\subset\bigcup_{[\wt{w}]\in \cW^{s}_{\wt{f}_c}([\wt{z}])}D^{u}_{\wt{f}_c}([\wt{w}],R_1)$$ 
hence
$[[\wt{x}],[\wt{y}]]$ and $[\wt{y}]$ belong to $D^u_{\wt{f_c}}([\wt{w}],R_1)$ for some $[\wt{w}]\in {\cW}^{s}_{\wt{f_c}}([\wt{z}]).$ Taking $R_2=2R_1$ gives the lemma.




\end{proof}

\subsection{Re statement of the main results} \label{smainresults}

From now on $f\in \text{PH}(M)$ will be a fibered lifted partially hyperbolic diffeomorphism and we are going to consider the subset $\text{PH}_f(M)\subseteq \text{PH}(M)$ of partially hyperbolic diffeomorphisms such that:
\begin{equation*}
	\text{PH}_f(M)=\left\{ 
	\begin{array}{cc}
		& g \in \text{PH}(M) \ \textnormal{which are isotopic to} \ f \ \text{and such that} \ \ \ \\
		& \textnormal{dim}E^{\sigma}_g=\textnormal{dim}E^{\sigma}_f, \ \textnormal{for} \ \sigma=s,c,u
	\end{array}
	\right\}
\end{equation*} 

By Theorem \ref{thmsta} (and Remark \ref{remisotopic}) we have that for every $g\in \text{PH}_f(M)$ there is a continuous and surjetive map $H_g:\wt{M}\to \wt{M}_c$ such that $\wt{f}_c \circ H_g=H_g\circ \wt{g}$. The first direct consequence of this semiconjugacy is the following:
$$\text{if} \ \  \wt{y}\in \wt{\cW}^{s}_g(\wt{x}) \ \ \text{then} \ \  H_g(\wt{y})\in\cWs_{\wt{f}_c}(H_g(\wt{x}))$$ and the same happens with the unstable manifold: 
$$\text{if} \ \ \wt{y}\in \wt{\cW}^{u}_g(\wt{x}) \ \ \text{then} \ \ H_g(\wt{y})\in\cWu_{\wt{f}_c}(H_g(\wt{x}))$$ 
This is easy to see since $\wt{y}\in \wt{\cW}^{s}_g(\wt{x})$ then $d(\wt{g}^n(\wt{y}),\wt{g}^n(\wt{x}))\to 0$ for $n\to +\infty$. This implies that 
$dist(H_g\circ \wt{g}^n(\wt{y}),H_g \circ \wt{g}^n(\wt{x}))\to 0$ and by the semiconjugacy relation this is the same as $dist(\wt{f}_c^n (H_g(\wt{y})),\wt{f}_c^n(H_g(\wt{x})))\to 0$. By (topological) hyperbolicity this can only happen if $H_g(\wt{y})\in\cWs_{\wt{f}_c}(H_g(\wt{x}))$. The same calculation works for the past.

On the other hand suppose there are points $\wt{x},\wt{y} \in \wt{M}$ such that their orbits are at finite distance at any time (this is the ``ideal'' picture of the behavior on center leaves), then since $\wt{f}_c$ is uniformly hyperbolic we have $H_g(\wt{x})=H_g(\wt{y})$. This motivates the following definition, which is the analogous to the one introduced in \cite{FPS}.  

\begin{df} We say that a dynamically coherent $g\in \text{PH}_f(M)$ is \textit{center-fibered} (CF) if $H_g^{-1}(H_g(\wt{x}))=\wt{\cW}^c_g(\wt{x})$ for every $\wt{x}\in \wt{M}$. 
\end{df}
In particular this means that two different center leaves of $\wt{g}$ are sent by $H_g$ to two different points in $\wt{M}_c$.

Now given a fibered lifted partially hyperbolic diffeomorphism $f$, we are going to note:
\begin{equation*}
	\text{PH}_f^0(M)=\left\{ 
	\begin{array}{cc}
		& \textnormal{connected componentes of $\text{PH}_f(M)$} \ \textnormal{which contains a} \ \ \\
		& \textnormal{DC and CF p.h.d. with global product structure} \ \ 
	\end{array}
	\right\}
\end{equation*}
We remark that the fibered lifted partially hyperbolic diffeomorphism $f$ is itself center fibered by definition because $H_f=\wt{p}$, then for every $\wt{x}\in \wt{M}$ we have that: 
$$ (H_f)^{-1}(H_f)(\wt{x})=\wt{p}^{-1}(\wt{p}(\wt{x}))=\wt{\cW}^c_f(\wt{x})
$$
Then the set $\text{PH}_f^0(M)$ is a non-empty open set with at least one connected component. Let us mention that in \cite{FG} it is proved that given a linear Anosov $A:\T^d\to \T^d$ (with $d\geq 10$), the space of Anosov diffeomorphisms homotopic to $A$ has infinitely many connected components. In particular, this implies that $\text{PH}_f^0(M)$ may have more than one connected component (besides the one containing $f$). 

With these new notations we can restate the main results of the article. 

\begin{thm} \label{teoA}
	Every $g\in \textnormal{PH}_f^0(M)$ is dynamically coherent and center-fibered.
\end{thm}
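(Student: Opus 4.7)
The strategy is to show that
\[
\mathcal{A} := \{g \in \textnormal{PH}_f^0(M) : g \text{ is DC, CF, and has GPS}\}
\]
is both open and closed inside every connected component of $\textnormal{PH}_f^0(M)$. Since by its very definition each component of $\textnormal{PH}_f^0(M)$ contains at least one such diffeomorphism, $\mathcal{A}$ then coincides with the full component, which is exactly the content of Theorem \ref{teoA}.

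For the openness step I would first argue that every $g \in \mathcal{A}$ is plaque expansive. If $(\wt{x}_n)$ and $(\wt{y}_n)$ are $\varepsilon$-pseudo orbits of $\wt{g}$ respecting the center plaquation and staying $\varepsilon$-close for all $n$, then applying $H_g$ and using $\wt{f}_c\circ H_g = H_g\circ\wt{g}$ together with Proposition \ref{p.propertiesc}(\ref{p1properties}) produces two sequences in $\wt{M}_c$ whose $\wt{f}_c$-iterates stay at uniformly bounded distance. Infinite expansiveness (Proposition \ref{p.propertiesc}(\ref{p4properties})) forces their $H_g$-images to coincide, so by the CF property $\wt{x}_n$ and $\wt{y}_n$ lie on the same center leaf; descending to $M$ gives plaque expansiveness. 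Theorem \ref{thmplaqueexpansivetostablydc0} then yields DC, together with a leaf conjugacy, on a $C^1$-neighbourhood of $g$, and the CF and GPS properties transfer to nearby $g'$ using the $C^0$-continuous dependence $g' \mapsto H_{g'}$ of Theorem \ref{thmsta}(\ref{point3stability}) combined with the leaf conjugacy.

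For closedness, let $g_n \to g$ with $g_n \in \mathcal{A}$. The semiconjugacy $H_g$ exists by Theorem \ref{thmsta}, and I would apply the integrability criterion of Section \ref{sectionintegrability} to derive DC for $g$ directly from the existence and quality of $H_g$: the candidate center-stable and center-unstable leaves are the pre-images $H_g^{-1}(\cW^s_{\wt{f}_c}([\wt{x}]))$ and $H_g^{-1}(\cW^u_{\wt{f}_c}([\wt{x}]))$, and the required local product and transversality estimates survive the limit $g_n\to g$ thanks both to the $C^0$-continuous variation of $H_g$ in $g$ and to the uniform estimates of Corollary \ref{c.pre}, which depend only on $f$. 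Once $g$ is shown to be DC, the CF property follows: the inclusion $\wt{\cW}^c_g(\wt{x}) \subset H_g^{-1}(H_g(\wt{x}))$ holds because bounded center segments have $H_g$-image of bounded diameter and are infinitely recurrent under iteration, forcing their image to be a single point by infinite expansiveness of $\wt{f}_c$; the reverse inclusion comes from the fact that any fibre of $H_g$ is contained simultaneously in the candidate center-stable and center-unstable leaves through $\wt{x}$, hence in $\cWcs_g(\wt{x})\cap\cWcu_g(\wt{x}) = \wt{\cW}^c_g(\wt{x})$.

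The main obstacle is the closedness step, and specifically the passage from the \emph{a priori} merely continuous semiconjugacy $H_g$ to honest topological foliations tangent to $E^{cs}_g$ and $E^{cu}_g$. Nothing immediate prevents the pre-image $H_g^{-1}(\cW^s_{\wt{f}_c}([\wt{x}]))$ from being a wild closed set, for instance with unstable leaves of $g$ meeting it in a Cantor-like pattern or accumulating on it in the Hausdorff sense. The integrability criterion must exactly rule this out, and the uniform GPS control contained in Lemma \ref{l.prod} and Corollary \ref{c.pre}, pulled back via $H_g$ and combined with the $\wt{g}$-invariance of the candidate leaves, is the key ingredient that forces clean transversality of $\wt{\cW}^u_g$ with the candidate center-stable pre-image and thus the foliated structure.
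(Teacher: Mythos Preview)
Your connectedness strategy matches the paper's, but the implementation diverges, and the closedness step contains a real gap.

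The paper does \emph{not} work directly with the set $\mathcal{A}$ of DC+CF+GPS diffeomorphisms. Instead, it passes through the auxiliary properties ``$\sigma$-proper and SADC with GPS'', which are equivalent to DC+CF+GPS by Proposition~\ref{propequivfinal} and Corollary~\ref{ccdi}, and shows that \emph{those} are $C^1$-open and closed (Propositions~\ref{tsadca}, \ref{pspopen}, \ref{gpsopen} and Theorem~\ref{tsadcypayc}). Your openness route via plaque expansiveness and Theorem~\ref{thmplaqueexpansivetostablydc0} is a legitimate alternative---indeed the paper proves plaque expansiveness later as Proposition~\ref{fispe}, for Theorem~\ref{teoB}---but you have not justified that the HPS center foliation of a nearby $g'$ coincides with the partition into $H_{g'}$-fibres. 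The leaf conjugacy relates $\cW^c_{g'}$ to $\cW^c_g$, but nothing in your sketch ties $H_{g'}$ to $H_g$ through that conjugacy, so the CF transfer is unexplained.

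The genuine gap is in closedness. You correctly locate the obstacle---that $H_g^{-1}(\cW^s_{\wt{f}_c}([\wt{x}]))$ could a priori be wild---and then say the integrability criterion ``must exactly rule this out''. That is circular: Theorem~\ref{tcdi} has hypotheses ($u$-properness, SADC with GPS) that must be verified for the limit $g$ \emph{before} it can be invoked, and those hypotheses are precisely what encodes the non-wildness. The paper's hardest work is Theorem~\ref{tsadcypayc}, where one shows these hypotheses pass to the limit: one covers $M$ by uniform local product structure boxes valid simultaneously for $g$ and for $g_k$ with $k$ large, proves that $\wt{\cW}^u_g$ and $\wt{\cW}^{cs}_{g_k}$ acquire global product structure, deduces that $\Pi^u_{H_g(\wt{x})}\circ H_{g_k}$ restricted to $\wt{\cW}^u_g(\wt{x})$ is a homeomorphism, and finally uses an annulus-and-triangular-inequality argument plus upper semicontinuity of a $\Gamma$-periodic function to get $u$-properness of $g$. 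None of this follows from the $C^0$-continuity of $g\mapsto H_g$ or from Corollary~\ref{c.pre} alone; the crucial manoeuvre is to play the strong unstable foliation of the \emph{limit} $g$ against the center-stable foliation of the \emph{approximants} $g_k$, and your proposal does not contain this idea.
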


A direct consequence from the proof of this theorem, is that it implies to have plaque expansiveness in the whole connected component. Applying Theorem \ref{thmplaqueexpansivetostablydc0} and a connectedness argument, we can obtain the following classification result.

\begin{thm} \label{teoB}
	Any two diffeomorphisms in the same connected component of $\textnormal{PH}_f^0(M)$ are leaf conjugate. In particular every $g\in \textnormal{PH}_f^0(M)$ in the same connected component of $f$ is leaf conjugate to $f$.
\end{thm}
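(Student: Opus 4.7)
The plan is to combine three ingredients: (i) the dynamical coherence and center-fibered property from Theorem \ref{teoA}, (ii) plaque expansiveness for every $g\in\text{PH}_f^0(M)$, which will follow essentially for free from the semiconjugacy $H_g$, and (iii) Theorem \ref{thmplaqueexpansivetostablydc0}, which upgrades the first two items into a local leaf conjugacy; a standard connectedness argument then promotes this to a global statement on each connected component.

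First, I would verify plaque expansiveness in the whole of $\text{PH}_f^0(M)$. Fix $g\in\text{PH}_f^0(M)$, which by Theorem \ref{teoA} is dynamically coherent and center-fibered. Let $\cP_g$ be a plaquation of $\cW^c_g$. Suppose $(p_n)$ and $(q_n)$ are two $\varepsilon$-pseudo orbits respecting $\cP_g$ with $d(p_n,q_n)<\varepsilon$ for all $n\in\Z$. Lift them to $\wt{M}$ as $(\wt{p}_n)$ and $(\wt{q}_n)$ and apply $H_g$. Since $H_g\circ\wt{g}=\wt{f}_c\circ H_g$, pseudo orbits that respect $\cP_g$ are mapped by $H_g$ to genuine $C$-close sequences for $\wt{f}_c$ (up to an a priori constant coming from Proposition \ref{p.propertiesc}(\ref{p1properties})). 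By infinite expansiveness of $\wt{f}_c$ (Proposition \ref{p.propertiesc}(\ref{p4properties})) together with uniqueness of shadowing, both sequences must have the same image under $H_g$ for all $n$, that is, $H_g(\wt{p}_n)=H_g(\wt{q}_n)$. The center-fibered property $H_g^{-1}(H_g(\wt{x}))=\wt{\cW}^c_g(\wt{x})$ then forces $\wt{p}_n$ and $\wt{q}_n$ to lie on the same center leaf, hence in a common plaque of $\cP_g$ once $\varepsilon$ is small enough. Thus $g$ is plaque expansive.

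Next, let $\cC$ be a connected component of $\text{PH}_f^0(M)$ and define on $\cC$ the relation ``is leaf conjugate to''. It is clearly an equivalence relation (the composition of leaf conjugacies is a leaf conjugacy, and inverses work since homeomorphisms are invertible). By the first step, every $g\in\cC$ is dynamically coherent and plaque expansive, so Theorem \ref{thmplaqueexpansivetostablydc0} gives a $C^1$-neighborhood $\cU(g)\subset\text{PH}(M)$ in which every partially hyperbolic diffeomorphism is leaf conjugate to $g$; shrinking if necessary we may assume $\cU(g)\subset\cC$ (using that $\text{PH}_f^0(M)$ is open in $\text{PH}(M)$ and $\cC$ is one of its components). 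Consequently, each equivalence class is open in $\cC$. Since $\cC$ is connected, there is a single equivalence class, and any two diffeomorphisms in $\cC$ are leaf conjugate. Applied to the connected component of $f$ itself, this proves the second statement.

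The main technical input is the plaque expansiveness argument in the second paragraph; the rest is the standard open-closed/connectedness machinery that is routinely used after HPS-type local stability. The only subtlety worth double-checking is that when we chain local leaf conjugacies along a compact path $t\mapsto g_t\in\cC$, the compositions glue into a well-defined global leaf conjugacy — but this follows immediately from transitivity of the relation and compactness of $[0,1]$, which lets us cover the path by finitely many neighborhoods of the form $\cU(g_t)$.
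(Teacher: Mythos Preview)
Your proposal is correct and follows essentially the same route as the paper: first establish plaque expansiveness from center-fibered plus infinite expansiveness of $\wt{f}_c$ (this is exactly Proposition \ref{fispe}), then use Theorem \ref{thmplaqueexpansivetostablydc0} and connectedness to globalize the local leaf conjugacy. One small wording point: in your plaque-expansiveness step, the key fact is that $H_g$ sends each lifted $\cP_g$-respecting pseudo orbit to a \emph{genuine orbit} of $\wt{f}_c$ (because center-fibered gives $H_g(\wt{g}(\wt{p}_n))=H_g(\wt{p}_{n+1})$), so infinite expansiveness alone forces $H_g(\wt{p}_n)=H_g(\wt{q}_n)$ --- shadowing is not needed here.
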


\section{Integrability for fibered lifted partially hyperbolic diffeomorphisms} \label{sectionintegrability}

In this section we are going to see an integrability criterion for partially hyperbolic diffeomorphisms isotopic to fibered lifted partially hyperbolic diffeomorphisms. For this criterion, we apply the ideas of \cite{FPS} to our setting of quotient dynamics.

\subsection{$\sigma$-Properness}

Recall that given $g\in \text{PH}_f(M)$, for any $\sigma\in \{s,u\}$, for any $\wt{x} \in \wt{M}$, and for any  $\e>0$, we denote by $$\wt{\cW}_g^{\sigma}(\wt{x},\e):=\{\wt{y} \in \wt{\cW}_g^{\sigma}(\wt{x}): d_{\wt{\cW}_g^{\sigma}}(\wt{x},\wt{y})< \e \}$$ to the $\e$-ball in $\wt{\cW}_g^{\sigma}$ of center $\wt{x}$ and radius $\e$, where $d_{\wt{\cW}_g^{\sigma}}$ denotes the leafwise distance, that is the distance induced by the Riemannian metric in $\wt{M}$ restricted to the leaves. In the same way, recall that for $[\wt{x}]\in \wt{M}_c$ and $r>0$ we denote by\footnote{Observe that we don't know whether $\wt{M}_c$ is a riemannian manifolds and neither we know if $\cW^\sigma_{\wt{f}_c}$ are smooth manifolds; for this we use the amibent metric.}  $$D^\sigma_{\wt{f}_c}([\wt{x}], r)=\{[\wt{y}]\in \cW^\sigma_{\wt{f}_c}([\wt{x}]): dist([\wt{x}],[\wt{y}])<r\}$$ for $\sigma=s,u$. From now on for simplicity, we are going to omit the subindex $\wt{f}_c$, i.e. we are going to note by $D^\sigma([\wt{x}], r)$ instead of $D^\sigma_{\wt{f}_c}([\wt{x}], r)$.


Recall that $\wt{p}: \wt{\cW}_f^{\sigma}(\wt{x})\to\cW^{\sigma}_{\wt{f}_c}([\wt{x}])$ is a homeomorphism for any $\wt{x}$. From  Corollary \ref{c.pre}, given $R>0$ there exists $R_1$ such that
$$\wt{p}^{-1}(D^\sigma([\wt{x}], R))\cap \wt{\cW}^{\sigma}_f(\wt{y})\subset \wt{\cW}^{\sigma}_f(\wt{y},R_1) \;\mbox{ for any } \wt{y} \;\mbox{ with } [\wt{y}]=[\wt{x}], \sigma=s,u.$$ 

The following definition is inspired by the one introduced in \cite{FPS}.

\begin{df}For $\sigma=s,u$ we say that $g\in \text{PH}_f(M)$ is \textit{$\sigma$-proper} if for every $\wt{x}\in \wt{M}$ the map $H_g$ restricted to $\wt{\cW}^{\sigma}_g(\wt{x})$ is uniformly proper. More precisely, for every $R>0$ there exists $R'>0$ such that
$$(H_g)^{-1}(D^{\sigma}(H_g(\wt{x}),R))\cap \wt{\cW}_g^{\sigma}(\wt{x}) \subset \wt{\cW}^{\sigma}_g(\wt{x},R') \ \ \text{for every} \ \  \wt{x}\in \wt{M}$$ 
\end{df}
\begin{rem} \label{osp1}
	In the previous definition, we can take $R=1$ by uniform hyperbolicity of the strong bundles, and the cocompactness of $\wt{M}$. 
\end{rem}
The definition of $\sigma$-properness can be expressed in a different and more geometric way. The next lemma gives the desire equivalence.  Given $g\in \text{PH}_f(M)$ we say that $g$ has condition: 
\begin{itemize}
	\item[$(I^{\sigma})$] If the function $H_g$ is injective restricted to $\wt{\mathcal{W}}^{\sigma}_g$-leaves.
	\item[$(S^{\sigma})$] If the function $H_g$ is surjective restricted to $\wt{\mathcal{W}}^{\sigma}_g$-leaves.
\end{itemize} 
Then if $g$ verifies both conditions, the map $H_g|_{\wt{\mathcal{W}}^{\sigma}_g(\wt{x})}:\wt{\mathcal{W}}^{\sigma}_g(\wt{x})\to \cW^{\sigma}_{\wt{f}_c}(H_g(\wt{x}))$ is a homeomorphism (recall that $\wt{\cW}^{\sigma}_g$ and $\cW^\sigma_{\wt{f}_c}$  are homeomorphic to euclidean spaces of the same dimension). 

\begin{lem} \label{equipro}
	If $g\in \textnormal{PH}_f(M)$ then, $g$ is $\sigma$-proper if and only if $g$ satisfies properties $(I^{\sigma})$ and $(S^{\sigma})$. Moreover $(I^{\sigma})$ implies $(S^{\sigma})$.
\end{lem}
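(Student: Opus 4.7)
The plan is to establish three implications: (a) $\sigma$-proper $\Rightarrow (I^\sigma)$; (b) $(I^\sigma)\Rightarrow (S^\sigma)$; (c) $(I^\sigma)\wedge(S^\sigma)\Rightarrow \sigma$-proper. I would treat $\sigma=u$, the stable case being dual under time reversal.

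For (a), if $\wt{y}\in\wt{\cW}^u_g(\wt{x})$, $\wt{y}\neq\wt{x}$, and $H_g(\wt{y})=H_g(\wt{x})$, the semiconjugacy $H_g\circ\wt{g}^n=\wt{f}_c^n\circ H_g$ gives $H_g(\wt{g}^n\wt{y})=H_g(\wt{g}^n\wt{x})\in D^u(H_g(\wt{g}^n\wt{x}),1)$ for every $n\ge 0$, so $u$-properness (with $R=1$, Remark \ref{osp1}) forces $d_{\wt{\cW}^u_g}(\wt{g}^n\wt{x},\wt{g}^n\wt{y})\le R'$ uniformly in $n$, contradicting the uniform expansion of $\wt{g}$ along unstable leaves.

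For (b) and (c), invariance of domain provides the opening move: both $\wt{\cW}^u_g(\wt{x})$ and $\cW^u_{\wt{f}_c}(H_g(\wt{x}))$ are topological manifolds of the same dimension $k=\dim E^u$ (Proposition \ref{p.propertiesc}(\ref{p3properties})), so $(I^u)$ promotes $\Phi:=H_g|_{\wt{\cW}^u_g(\wt{x})}$ to an open embedding. Both the closedness of the image (for (b)) and the uniformity of the properness estimate (for (c)) fail only if one can produce sequences $\wt{x}_n\in \wt{M}$ and $\wt{y}_n\in\wt{\cW}^u_g(\wt{x}_n)$ with $dist(H_g(\wt{x}_n),H_g(\wt{y}_n))$ bounded and $d_{\wt{\cW}^u_g}(\wt{x}_n,\wt{y}_n)\to\infty$; in (b) one takes $\wt{x}_n\equiv\wt{x}$ and $H_g(\wt{y}_n)$ converging to an alleged boundary point of the image, in (c) the sequences witness the failure of a uniform $R'$. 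Choose $m_n\to\infty$ so that $d_{\wt{\cW}^u_g}(\wt{g}^{-m_n}\wt{x}_n,\wt{g}^{-m_n}\wt{y}_n)\approx 1$ (using the uniform contraction of $\wt{g}^{-1}$ on $\wt{\cW}^u_g$), and pick $\gamma_n\in\Gamma$ so that $\gamma_n\cdot\wt{g}^{-m_n}\wt{x}_n$ lies in a fixed compact fundamental domain. Extract a convergent subsequence, invoking continuous dependence of $\wt{\cW}^u_g$ on base points, with $\gamma_n\cdot\wt{g}^{-m_n}\wt{x}_n\to\wt{x}^*$ and $\gamma_n\cdot\wt{g}^{-m_n}\wt{y}_n\to\wt{y}^*\in\wt{\cW}^u_g(\wt{x}^*)$, $\wt{y}^*\neq\wt{x}^*$. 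By the $\Gamma$-equivariance of $H_g$ (Theorem \ref{thmsta}(\ref{point4stability})), the isometric $\Gamma$-action on $\wt{M}_c$, the semiconjugacy, and the uniform contraction of $\wt{f}_c^{-1}$ on $\cW^u_{\wt{f}_c}$ (Definition \ref{dfhyphom}),
\[
dist\bigl(H_g(\gamma_n\wt{g}^{-m_n}\wt{x}_n),H_g(\gamma_n\wt{g}^{-m_n}\wt{y}_n)\bigr)=dist\bigl(\wt{f}_c^{-m_n}H_g(\wt{x}_n),\wt{f}_c^{-m_n}H_g(\wt{y}_n)\bigr)\to 0,
\]
so $H_g(\wt{x}^*)=H_g(\wt{y}^*)$, contradicting $(I^u)$ at $\wt{x}^*$.

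The main obstacle is this renormalization step: one must juggle four ingredients simultaneously---the leafwise uniform hyperbolicity of $\wt{g}$ to bring unbounded leafwise distances down to order one, the leafwise uniform hyperbolicity of $\wt{f}_c$ (Definition \ref{dfhyphom}) to drive the corresponding $H_g$-images to a single point, the cocompact isometric $\Gamma$-action to trade orbits escaping to infinity for honest subsequential limits in $\wt{M}$, and continuous dependence of the foliation $\wt{\cW}^u_g$ to keep the limit pair on a common strong-unstable leaf. Once these are assembled, (b) and (c) collapse to the same contradiction with the injectivity hypothesis.
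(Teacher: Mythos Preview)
Your argument is correct. Part (a) matches the paper's sketch. For (b) and (c), your unified renormalization argument---pulling back by $\wt{g}^{-m_n}$ until the leafwise distance is of order one, translating by $\Gamma$ into a fundamental domain, and passing to a limit pair that violates $(I^u)$---is sound; the ingredients you list (leafwise hyperbolicity of $\wt{g}$, the contraction in Definition \ref{dfhyphom}\eqref{c1hyphom} for $\wt{f}_c$, the isometric cocompact $\Gamma$-action, and $C^1$ continuity of $\wt{\cW}^u_g$) are exactly what is needed, and the lower bound on the leafwise distance survives the limit to give $\wt{y}^*\neq\wt{x}^*$.

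The paper proceeds differently for (c): rather than renormalize, it defines
\[
\varphi(\wt{x})=\inf\{R>0:\ (H_g)^{-1}(D^\sigma(H_g(\wt{x}),1))\cap\wt{\cW}^\sigma_g(\wt{x})\subset\wt{\cW}^\sigma_g(\wt{x},R)\},
\]
observes that $(I^\sigma)\wedge(S^\sigma)$ makes $\varphi$ finite at every point, checks upper semicontinuity, and concludes boundedness from $\Gamma$-periodicity and compactness of a fundamental domain. Your route has the virtue of treating (b) and (c) by a single mechanism and of showing directly that $(I^\sigma)$ alone already forces properness; the paper's $\varphi$-argument is shorter once one grants the pointwise finiteness, and it is the same device reused later in the proof of Theorem \ref{tsadcypayc}, so it has some economy within the paper as a whole.
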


\begin{proof}
We omit details since it is almost the same as Lemmas 3.2 and 3.4 in \cite{FPS}. The only if part is rather simple and follows form the fact that $\sigma$-properness implies injectivity, and since both spaces are homeomorphic to euclidean spaces of the same dimension and ``hyperbolicity'', it follows surjectivity.  For the if part, consider the function $\varphi:\wt{M}\to \R$ where
$$\varphi(\wt{x})=\inf\{R>0: (H_g)^{-1}(D^\sigma(H_g(\wt{x}),1))\cap \wt{\cW}_g^{\sigma}(\wt{x}) \subset \wt{\cW}^{\sigma}_g(\wt{x},R)\; \forall \;\wt{y}\in H_g(\wt{x})\}. $$
The function $\varphi$ is upper continuos and $\Gamma$-periodic, thus uniformly bounded. 
\end{proof}

\subsection{Strong almost dynamically coherence}

Recall that given a subset $K \subset \wt{M}$ and $R>0$ we call $B(K,R)$ the $R$-neighbourhood of $K$, that is:
$$B(K,R)=\{\wt{x}\in \wt{M}: \text{there is} \ \wt{y}\in K \ \text{s.t.} \  d(\wt{x},\wt{y})<R\}
$$ This includes the case $K=\wt{x}$ and $B(\wt{x},R)=\{\wt{y}\in \wt{M}:   d(\wt{x},\wt{y})<R\}$. 

\begin{df}[Almost parallel foliations] Given $\wt{\mathcal{F}}_1$ and $\wt{\mathcal{F}}_2$ two foliations in $\wt{M}$ we say they are \textit{almost parallel} if there exists $R>0$ such that for every $\wt{x}\in \wt{M}$, there are points $\wt{x_1}, \wt{x_2} \in \wt{M}$ such that:
	\begin{itemize}
		\item $\wt{\cF}_1(\wt{x})\subset B(\wt{\cF}_2(\wt{x_1}),R)$ and $\wt{\cF}_2(\wt{x_1})\subset B(\wt{\cF}_1(\wt{x}),R)$
		\item  $\wt{\cF}_2(\wt{x})\subset B(\wt{\cF}_1(\wt{x_2}),R)$ and $\wt{\cF}_1(\wt{x_2})\subset B(\wt{\cF}_2(\wt{x}),R)$  
	\end{itemize}
\end{df}
It's easy to see that this is an equivalence relation. Moreover the condition can be expressed in terms of the Hausdorff distance:
for every $\wt{x}\in \wt{M}$, there exist $\wt{x_1}, \wt{x_2}\in \wt{M}$ such that $d_H(\wt{\cF}_1(\wt{x}),\wt{\cF}_2(\wt{x_1}))<R$ and $d_H(\wt{\cF}_2(\wt{x}),\wt{\cF}_1(\wt{x_2}))<R$.

\begin{df} We say that $g\in \text{PH}_f(M)$ is \textit{strongly almost dynamically coherent} \textnormal{(SADC)} if there exist foliations $\mathcal{F}^{cs}_g$, $\mathcal{F}^{cu}_g$ (not necessarily invariant) such that:
	\begin{itemize}
		\item $\mathcal{F}^{cs}_g$, $\mathcal{F}^{cu}_g$ are transverse to $E^{u}_g$, $E^{s}_g$ respectively,
		
		\item $\wt{\mathcal{F}}^{cs}_g$, $\wt{\mathcal{F}}^{cu}_g$ are almost parallel to the foliations $\wt{\cW}^{cs}_f$, $\wt{\cW}^{cu}_f$ respectively.
	\end{itemize}	 
\end{df}

\begin{figure}[H]
	\begin{center}
		\includegraphics [width=14.5cm]{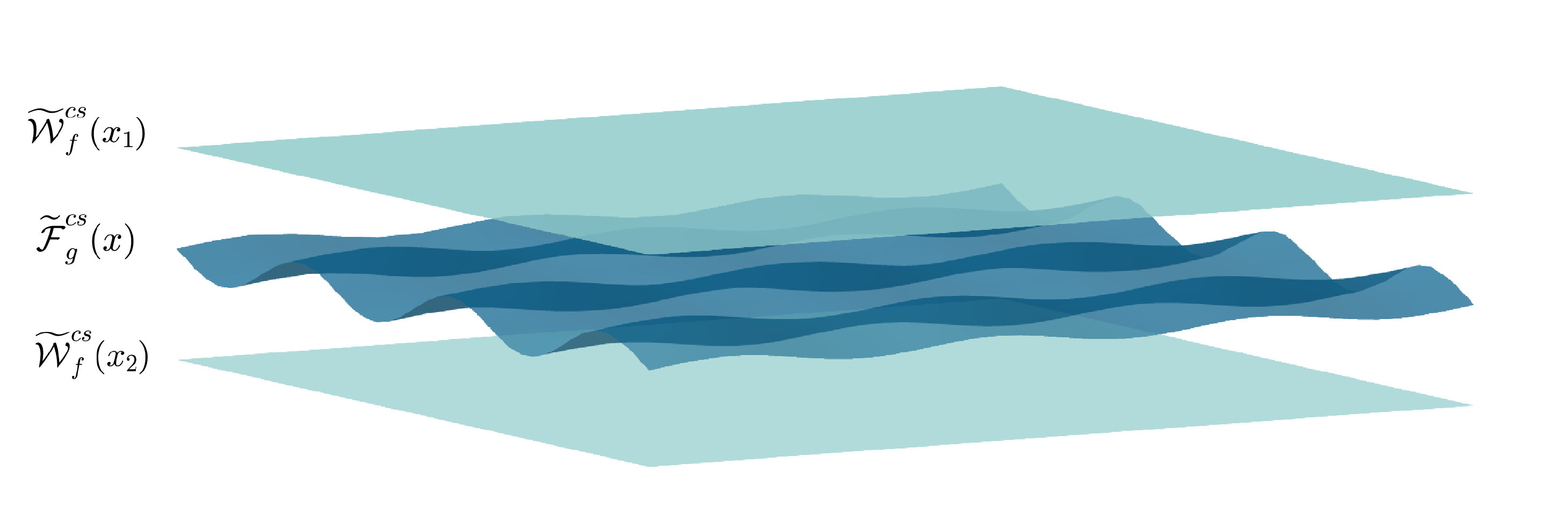}
		\caption{Almost parallel foliations $\mathcal{F}^{cs}_g$ and $\mathcal{W}^{cs}_f$}
	\end{center}
\end{figure}
\vspace{-0.5cm}

\begin{df} Given $g\in \text{PH}_f(M)$ which is SADC with their corresponding foliations $\mathcal{F}^{cs}_g$ and $\mathcal{F}^{cu}_g$, we say that $g$ has global product structure if $\mathcal{F}^{cs}_g$ and $\wt{\cW}^{u}_g$ have global product structure (GPS) and, $\mathcal{F}^{cu}_g$ and $\wt{\cW}^{s}_g$ have global product structure.
\end{df}

\subsection{Integrability criterion}

The following is an integrability criterion for partially hyperbolic diffeomorphisms isotopic to fibered lifted p.h. diffeomorphisms. For this criterion we adapt the ideas of \cite{FPS} to our setting via quotient dynamics. 

\begin{thm} \label{tcdi}
	Assume that $g\in \textnormal{PH}_f(M)$ verifies the following conditions:
	\begin{itemize}
		\item $g$ is $u$-proper. 
		\item $g$ is \textnormal{SADC} with global product structure. 
	\end{itemize}
	Then the bundle $E^{cs}_g$ is integrable into a $g$-invariant foliation $\mathcal{W}^{cs}_g$ that verifies $H_g^{-1}(\cW^{s}_{\wt{f}_c}(H_g(\wt{x})))=\wt{\cW}^{cs}_g(\wt{x})$. 
	Moreover $\wt{\mathcal{W}}^{cs}_g$ and $\wt{\cW}^{u}_g$ have global product structure.
\end{thm}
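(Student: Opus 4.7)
The plan is to take as the candidate center-stable leaf through $\wt{x}$ the set
$$
\wt{\cW}^{cs}_g(\wt{x}) := H_g^{-1}\bigl(\cW^{s}_{\wt{f}_c}(H_g(\wt{x}))\bigr),
$$
and verify that $\{\wt{\cW}^{cs}_g(\wt{x})\}_{\wt{x}\in\wt{M}}$ is a foliation by topological submanifolds tangent to $E^{cs}_g$, $\wt{g}$- and $\Gamma$-invariant, and in GPS with $\wt{\cW}^u_g$. Several properties are immediate: this is a partition of $\wt{M}$ (preimage under $H_g$ of the $\cW^s_{\wt{f}_c}$-partition); it is $\wt{g}$-invariant since $\wt{f}_c\circ H_g=H_g\circ \wt{g}$ (Item \ref{point1stability} of Theorem \ref{thmsta}) and $\wt{f}_c$ preserves $\cW^s_{\wt{f}_c}$; it is $\Gamma$-equivariant by Item \ref{point4stability}; and by the discussion of the semiconjugacy each strong stable leaf $\wt{\cW}^s_g(\wt{y})$ is contained in $\wt{\cW}^{cs}_g(\wt{y})$, so the partition is $\wt{\cW}^s_g$-saturated.

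First I would establish GPS with $\wt{\cW}^u_g$. By $u$-properness and Lemma \ref{equipro}, $H_g$ restricts to a homeomorphism $\wt{\cW}^u_g(\wt{y})\to\cW^u_{\wt{f}_c}(H_g(\wt{y}))$ for every $\wt{y}$, so
$$
\wt{\cW}^u_g(\wt{y})\cap\wt{\cW}^{cs}_g(\wt{x}) = \bigl(H_g|_{\wt{\cW}^u_g(\wt{y})}\bigr)^{-1}\!\bigl(\cW^u_{\wt{f}_c}(H_g(\wt{y}))\cap\cW^s_{\wt{f}_c}(H_g(\wt{x}))\bigr)
$$
is a single point by the GPS of $\cW^u_{\wt{f}_c}$ and $\cW^s_{\wt{f}_c}$ in $\wt{M}_c$ (property \ref{fph3}). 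Continuity in $(\wt{x},\wt{y})$ follows from continuity of the bracket in $\wt{M}_c$ and of $H_g$, yielding the announced GPS between $\wt{\cW}^{cs}_g$ and $\wt{\cW}^u_g$.

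Next I would use the SADC hypothesis to promote each leaf to a topological submanifold. Let $\cF^{cs}_g$ be the SADC foliation. Since $\cF^{cs}_g$ and $\wt{\cW}^u_g$ have GPS by hypothesis, the map
$$
\Psi_{\wt{x}}\colon \cF^{cs}_g(\wt{x}) \longrightarrow \wt{\cW}^{cs}_g(\wt{x}), \qquad \Psi_{\wt{x}}(\wt{z})=\wt{\cW}^u_g(\wt{z})\cap\wt{\cW}^{cs}_g(\wt{x}),
$$
is well defined and continuous. Injectivity follows because distinct points of $\cF^{cs}_g(\wt{x})$ lie on distinct unstable leaves; surjectivity follows from the GPS between $\cF^{cs}_g$ and $\wt{\cW}^u_g$, which for any $\wt{y}\in\wt{\cW}^{cs}_g(\wt{x})$ provides a unique $\wt{z}\in\cF^{cs}_g(\wt{x})\cap\wt{\cW}^u_g(\wt{y})$ with $\Psi_{\wt{x}}(\wt{z})=\wt{y}$. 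The inverse is continuous by continuity of strong unstable holonomy, so $\Psi_{\wt{x}}$ is a homeomorphism; letting $\wt{x}$ vary continuously produces local charts, and $\{\wt{\cW}^{cs}_g(\wt{x})\}$ is a foliation by topological submanifolds of dimension $\dim E^s_g+\dim E^c_g$.

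Finally one must verify tangency to $E^{cs}_g$. The leaf is strong stable saturated and has the correct dimension, and transversality to $E^u_g$ at each point is forced by the GPS established above: a nontrivial vector in $T_{\wt{y}}\wt{\cW}^{cs}_g(\wt{x})\cap E^u_g(\wt{y})$ would yield a nontrivial arc in $\wt{\cW}^u_g(\wt{y})\cap\wt{\cW}^{cs}_g(\wt{x})$, contradicting that the intersection is a single point. Transversality together with $\wt{\cW}^s_g$-saturation allows one to conclude pointwise tangency to $E^{cs}_g$ exactly as in \cite[Section 3]{FPS}, adapted to the quotient setting. Descending by $\Gamma$-equivariance yields the invariant foliation $\cW^{cs}_g$ on $M$. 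I expect this last step to be the main obstacle: promoting topological transversality and strong stable saturation to pointwise tangency to a merely H\"older bundle $E^{cs}_g$ is delicate and depends essentially on the transversality of $\cF^{cs}_g$ to $E^u_g$ together with the domination in the splitting and the cocompactness of the $\Gamma$-action on $\wt{M}$.
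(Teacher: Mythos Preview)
Your setup through the GPS step is clean and correct: defining the candidate leaves as $H_g^{-1}(\cW^s_{\wt{f}_c}(H_g(\wt{x})))$, verifying they form a $\wt{g}$- and $\Gamma$-invariant partition, and using $u$-properness (via Lemma~\ref{equipro}) to conclude that each $\wt{\cW}^u_g$ meets each candidate leaf in a single point is a nice shortcut. The bijection $\Psi_{\wt{x}}$ is also fine and does exhibit each candidate leaf as a continuous graph over $\cF^{cs}_g(\wt{x})$ in the local product coordinates coming from the GPS of $\cF^{cs}_g$ and $\wt{\cW}^u_g$.

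The genuine gap is the tangency step. Your argument that ``a nontrivial vector in $T_{\wt{y}}\wt{\cW}^{cs}_g(\wt{x})\cap E^u_g(\wt{y})$ would yield a nontrivial arc in $\wt{\cW}^u_g(\wt{y})\cap\wt{\cW}^{cs}_g(\wt{x})$'' presupposes that the candidate leaf is $C^1$, which is exactly what you are trying to prove. All you know a priori is that $H_g$ is continuous (it comes from a shadowing argument), so the preimage $H_g^{-1}(\cW^s_{\wt{f}_c}(\cdot))$ is only a topological submanifold via your $\Psi_{\wt{x}}$, with no control on differentiability or even on a Lipschitz constant. Strong stable saturation plus topological transversality to $E^u_g$ plus $\wt{g}$-invariance is not enough to force tangency to a merely H\"older bundle $E^{cs}_g$ without some initial regularity to feed into the cone contraction. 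Your appeal to \cite[Section~3]{FPS} is misplaced: that reference, like the paper here, obtains tangency by the graph transform route, not from the properties you list.

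This is precisely why the paper proceeds differently. It iterates the \emph{smooth} transversal foliation $\wt{\cF}^{cs}_g$ backwards by $\wt{g}$: each iterate $\wt{g}^{-n}(\wt{\cF}^{cs}_g)$ has $C^1$ plaques lying in ever-thinner cones around $E^{cs}_g$, which gives a precompact family of Lipschitz graphs. The key Claim in the paper then shows that any subsequential limit plaque through $\wt{z}$ is mapped by $H_g$ into $\cW^s_{\wt{f}_c}(H_g(\wt{z}))$, and $u$-properness (injectivity of $H_g$ on unstable leaves) rules out multiple limits and merging. Tangency then comes for free because the limit plaques inherit the cone condition from the iterates and are $\wt{g}$-invariant. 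In short, the paper uses SADC and the dynamics to first build $C^1$ leaves tangent to $E^{cs}_g$, and only afterwards identifies them with the $H_g$-preimages; your proposal reverses this order, and the missing regularity is the price.
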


\begin{proof}
	The idea of the proof is pretty direct: take the foliation $\wt{\cF}^{cs}_g$ given by the SADC property and iterate it backwards by $\wt{g}$ hoping that in the limit it will converge to the desired foliation. Specifically the goal is to show that $\{(H_g)^{-1}(\cW^{s}_{\wt{f}_c}(H_g(\wt{y}))): \wt{y}\in \wt{M} \}$ is the center-stable foliation of $\wt{g}$. 
	
	First observe that this partition of $\wt{M}$ is $\wt{g}$-invariant: 
	\begin{eqnarray*}
		\wt{g}^{-1}((H_g)^{-1}(\cW^{s}_{\wt{f}_c}(H_g(\wt{y}))))&=&(H_g \circ \wt{g})^{-1}(\cW^{s}_{\wt{f}_c}(H_g(\wt{y})))= (\wt{f}_c\circ H_g)^{-1}(\cW^{s}_{\wt{f}_c}(H_g(\wt{y}))) \\
		&=& H_g^{-1}( \wt{f}_c^{-1}(\cW^{s}_{\wt{f}_c}(H_g(\wt{y}))))=H_g^{-1}(\cW^{s}_{\wt{f}_c}(\wt{f}_c^{-1}(H_g(\wt{y}))))	\\
		&=& H_g^{-1}(\cW^{s}_{\wt{f}_c}(H_g(\wt{g}^{-1}(\wt{y}))))
	\end{eqnarray*}
	Moreover the partition is invariant by deck translations since $H_g$ is $\Gamma$-invariant. Now take the foliation $\wt{\cF}^{cs}_g$ given by the SADC property. Since it is almost parallel to $\wt{\cW}^{cs}_f$ and $H_g$ is at bounded Hausdorff distance from $H_f=\wt{p}$ we have that $H_g(\wt{\cF}^{cs}_g(\wt{x}))$ is also at bounded Hausdorff distance from $\cW^s_{\wt{f}_c}([\wt{x}_1])$ for some $\wt{x}_1\in \wt{M}$.
	
		Since $\wt{\cW}^{u}_g$ and $\wt{\cF}^{cs}_g$ have global product structure, we can see the leaves of $\wt{\cF}^{cs}_g$ (and then of $\wt{g}^{-n}(\wt{\cF}^{cs}_g)$) as graphs of functions from $\R^{cs}$ to $\R^{u}$. Since the foliation $\wt{\cF}^{cs}_g$ is uniformly transversal to $E_g^{u}$ we know there are local product structure boxes of uniform size in $\wt{M}$, i.e. there is $\e>0$ s.t. $\forall \wt{x} \in \wt{M}$ there is a neighbourhood $V_{\wt{x}}\supseteq B(\wt{x},\e)$ and $C^{1}$-local coordinates $\psi_{\wt{x}}:\D^{cs}\times \D^{u} \to V_{\wt{x}}$ such that:
	\begin{itemize}
		\item $\psi_{\wt{x}}(\D^{cs}\times \D^{u})=V_{\wt{x}}$
		\item For every $\wt{y} \in B(\wt{x},\e)\subseteq V_{\wt{x}}$ we have that if we call $W^{\wt{x}}_{n}(\wt{y})$ to the connected component of $V_{\wt{x}}\cap \wt{g}^{-n}(\wt{\cF}^{cs}_g(\wt{g}^n(\wt{y})))$ that contains $\wt{y}$ then
		$$\psi_{\wt{x}}^{-1}(W^{\wt{x}}_n(\wt{y}))=\textnormal{graph}(h^{\wt{x},\wt{y}}_n)$$ 
		where $h^{\wt{x},\wt{y}}_n:\D^{cs}\to \D^{u}$ is a $C^{1}$ function with bounded first derivatives.  
	\end{itemize}
	In this way we get that the set $\{h^{\wt{x},\wt{y}}_n\}_{n\in \N}$ is precompact in the space of Lipschitz functions $\D^{cs}\to \D^{u}$ (\cite{HPS}). Therefore the leaves of $\wt{g}^{-n}(\wt{\cF}^{cs}_g)$ have convergent sub-sequences. From this point we have to deal with two problems: the first one is that \textit{a priori} there could be a leaf with more than one limit, and second, that in the limit, different leaves might merge. We will handle these two problems in the same way. For every $\wt{y} \in B(\wt{x},\e)$, we call $\cJ^{\wt{x}}_{\wt{y}}$ to the set of indices such that for every $\alpha \in \cJ^{\wt{x}}_{\wt{y}}$ there is a Lipschitz function $h^{\wt{x},\wt{y}}_{\infty,\alpha}:\D^{cs}\to \D^{u}$ and a subsequence $n_j\to +\infty$ such that: 
	$$h^{\wt{x},\wt{y}}_{\infty,\alpha}=\lim_{j\to +\infty}h^{\wt{x},\wt{y}}_{n_j}$$
	Every $h^{\wt{x},\wt{y}}_{\infty,\alpha}$ has its corresponding graph, and we note $W^{\wt{x}}_{\infty,\alpha}(\wt{y})$ to the image by $\psi_{\wt{x}}$ of this graph. 
	The following claim is crucial for the theorem. 
	\begin{claim}
		For every $\wt{z}\in B(\wt{x},\e)$ and every $\alpha \in \cJ^{\wt{x}}_{\wt{z}}$, we have that $H_g(W^{\wt{x}}_{\infty,\alpha}(\wt{z}))\subseteq \cW^s_{\wt{f}_c}(H_g(\wt{z}))$. 
		\begin{proof}
			Take $\wt{z}\in B(\wt{x},\e)$ and $\alpha \in \cJ^{\wt{x}}_{\wt{z}}$. Then by hypothesis there is subsequence $n_j\to +\infty$ such that $W^{\wt{x}}_{n_j}(\wt{z})\to W^{\wt{x}}_{\infty,\alpha}(\wt{z})$. Given $\wt{y}\in W^{\wt{x}}_{\infty,\alpha}(\wt{z})$ we want to prove that $H_g(\wt{y})\in \cW^s_{\wt{f}_c}(H_g(\wt{z}))$. Call $\wt{z}_{n_j}=\wt{\cW}_g^{u}(\wt{y})\cap W^{\wt{x}}_{n_j}(\wt{z})$ (see Figure \ref{figmerge} below). Then $\wt{z}_{n_j}\to \wt{y}$ when $j \to +\infty$ and $\wt{g}^{n_j}(\wt{z}_{n_j})\in \wt{\cF}^{cs}_g(\wt{g}^{n_j}(\wt{z}))$. If $H_g(\wt{y})=H_g(\wt{z})$ we're done. Suppose by the contrary that $H_g(\wt{z})\neq H_g(\wt{y})$, and they're not in the same stable leaf. 
			Then $H_g(\wt{z}_{n_j})\to H_g(\wt{y})\neq H_g(\wt{z})$. Note that $\wt{g}^{n_j}(\wt{z})$ and $\wt{g}^{n_j}(\wt{z}_{n_j})$ belong to the same leaf of $\wt{\cF}^{cs}_g$. Since $\cF^{cs}_g$ is almost parallel to $\wt{\cW}^{cs}_f$ and $H_g$ is $C^{0}$-close to $H_f=\wt{p}$, we know that there exists $K>0$ such that $H_g(\wt{g}^{n_j}(\wt{z}))$ and $H_g(\wt{g}^{n_j}(\wt{z}_{n_j}))$ belong to $B(\cW^s_{\wt{f}_c}([\wt{t_j}]), K)$ for some $\wt{t_j}$ and  any $j.$ Take $\varepsilon>0$ and the corresponding $R_1>0$ from Corollary \ref{c.pre} (3) for these $K$ and $\varepsilon$. Let $[\wt{w}]=[H_g(\wt{z}), H_g(\wt{y})]$, then we have $dist(\wt{f}^n_c([\wt{w}]), \wt{f}^n_c(H_g(\wt{z})))\to_n 0$. Thus for $j$ large, we have:
			\begin{itemize}
				\item  $H_g(\wt{g}^{n_j}(\wt{z}))$ and $H_g(\wt{g}^{n_j}(\wt{z}_{n_j}))$ belong to $B(\cW^s_{\wt{f}_c}([\wt{t_j}]), K)$
				\item $dist(\wt{f}^{n_j}_c(H_g(\wt{z})), \wt{f}_c^{n_j}([\wt{w}]))<\varepsilon$
			\end{itemize}
			Notice that by the semiconjugacy relation we have that $H_g(\wt{g}^{n_j}(\wt{z}))=\wt{f}^{n_j}_c(H_g(\wt{z}))$, thus by the corollary we just mentioned, we have that $ \wt{f}^{n_j}_c([\wt{w}]) \in D^u_{\wt{f}_c}(\wt{f}^{n_j}_c(H_g(\wt{z}_{n_j}),R_1))$ which in particular implies $dist(\wt{f}^{n_j}_c([\wt{w}]), \wt{f}^{n_j}_c(H_g(\wt{z}_{n_j}))<R_2$ for every $j$ large enough. On the other hand, since $dist([\wt{w}],H_g(\wt{z}_{n_j}))>\delta$ for some $\delta>0$ (recall that $H_g(\wt{z}_{n_j})\to H_g(y)\neq [\wt{w}]$) we have that  $dist(\wt{f}^{n}_c([\wt{w}]), \wt{f}^{n}_c(H_g(\wt{z}_{n_j}))\to +\infty$ which is a contradiction.
				
		\end{proof}
	\end{claim}
	We are going to solve the two problems mentioned above in the same way. Suppose first that $\wt{z} \in B(\wt{x},\e)$ has two different limits $W^{\wt{x}}_{\infty,\alpha}(\wt{z})$ and $W^{\wt{x}}_{\infty,\beta}(\wt{z})$. Then there are points $\wt{z_1}\in W^{\wt{x}}_{\infty,\alpha}(\wt{z})$ and $\wt{z_2}\in W^{\wt{x}}_{\infty,\beta}(\wt{z})$ that belong to the same $\wt{\cW}^{u}_g$-leaf. The previous claim implies that $H_g(\wt{z_1})$ and $H_g(\wt{z_2})$ belong to $\cW^{s}_{\wt{f}_c}(H_g(\wt{z}))$ and this can happen if and only if $H_g(\wt{z_1})=H_g(\wt{z_2})$ which contradicts the injectivity of $H_g|_{\wt{\mathcal{W}}^{u}_g}$.
	
	For the second problem we manage the same way. Let's suppose there are points $\wt{z_1} \neq \wt{z_2}$ in $B(\wt{x},\e)$ such that their limits $W^{\wt{x}}_{\infty,\alpha}(\wt{z_1})$ and $W^{\wt{x}}_{\infty,\beta}(\wt{z_2})$ have non empty intersection. Then we get two points $\wt{y_1} \in W^{\wt{x}}_{\infty,\alpha}(\wt{z_1})$ and $\wt{y_2} \in W^{\wt{x}}_{\infty,\beta}(\wt{z_2})$ inside the same $\wt{\cW}^{u}_g$-leaf. Again the previous claim implies $H_g(\wt{z_1})=H_g(\wt{z_2})$ and this contradicts the injectivity of $H_g|_{\wt{\mathcal{W}}^{u}_g}$.  
	\begin{figure}[H] 
		\begin{center}
			\includegraphics [width=13.5cm]{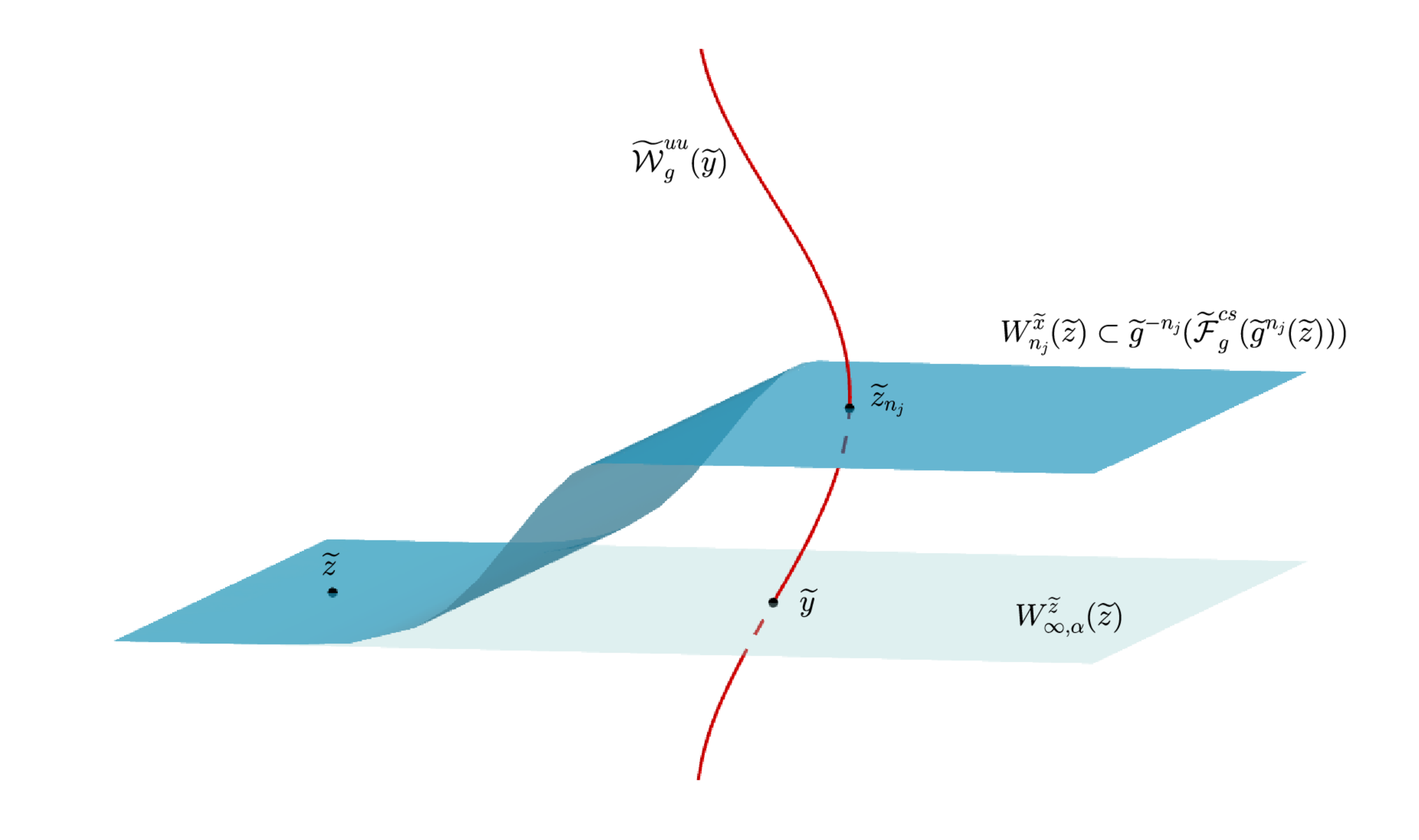}
			\caption{Plaques does not merge.} \label{figmerge}
		\end{center}
	\end{figure}
	\vspace{-0.5cm} 	
	To sum up, we obtained that for every $\wt{x}\in \wt{M}$ and every $\wt{y}\in B(\wt{x},\e)$, the limit $W^{\wt{x}}_{\infty}(\wt{y})$ of the $W^{\wt{x}}_n(\wt{y})$ leaves is unique, and for every pair of points $\wt{y},\wt{z} \in B(\wt{x},\e)$, their limits are disjoint or coincide. These limits are also $\wt{g}$-invariant. To get that it is truly a foliation, it's enough to observe the following: given two points $\wt{z},\wt{w}\in B(\wt{x},\e)$, we have that $W^{\wt{x}}_{\infty}(\wt{z})$ and $\wt{\cW}^{u}_g(\wt{w})$ intersect in a unique point. Since the leaves of $\wt{\cW}^{u}_g$ varies continuously and the plaques of $W^{\wt{x}}_{\infty}$ either coincide or are disjoint, we get a continuous function from $\D^{cs}\times \D^{u}$ to a neighbourhood of $\wt{x}$ which sends horizontal disks to $W^{\wt{x}}_{\infty}$-plaques. This proves that these plaques form a foliation. Since the leaves of the foliations are tangent to small cones around the $E^{cs}_g$ direction and these leaves are $\wt{g}$-invariant, we get that the foliation is tangent to $E^{cs}_g$. Finally observe that the foliation $\wt{\cW}^{cs}_g$ has the same properties that $\wt{\cF}^{cs}_g$. Thus we have global product structure between $\wt{\cW}^{cs}_g$ and $\wt{\cW}^{u}_g$.
\end{proof}

\begin{cor} \label{ccdi}
	If $g \in \textnormal{PH}_f(M)$ verifies the following conditions:
	\begin{itemize}
		\item $g$ is $u$ and $s$ proper.
		\item $g$ is \textnormal{SADC} with global product structure. 
			\end{itemize}
	Then $g$ is dynamically coherent, center fibered and has global product structure. 
\end{cor}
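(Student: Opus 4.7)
The plan is to apply Theorem \ref{tcdi} twice, once as stated and once in its symmetric $cu$-version, and then harvest the three conclusions (dynamical coherence, center-fibered, and GPS) essentially by set-theoretic manipulation of the identities that the theorem provides.

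First I would apply Theorem \ref{tcdi} directly to $g$ using $u$-properness, obtaining a $\wt g$-invariant foliation $\wt{\cW}^{cs}_g$ tangent to $E^{cs}_g$ that satisfies the key identity
\[
\wt{\cW}^{cs}_g(\wt x)\;=\;H_g^{-1}\bigl(\cW^{s}_{\wt f_c}(H_g(\wt x))\bigr),
\]
together with global product structure between $\wt{\cW}^{cs}_g$ and $\wt{\cW}^{u}_g$. Next I would invoke the symmetric version of Theorem \ref{tcdi}: the proof of that theorem goes through verbatim after interchanging the roles of $s$ and $u$ (iterating $\wt g$ forward instead of backward, using the transverse foliation $\wt{\cF}^{cu}_g$ provided by SADC, and replacing the $u$-properness input by $s$-properness). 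This yields a $\wt g$-invariant foliation $\wt{\cW}^{cu}_g$ tangent to $E^{cu}_g$ satisfying
\[
\wt{\cW}^{cu}_g(\wt x)\;=\;H_g^{-1}\bigl(\cW^{u}_{\wt f_c}(H_g(\wt x))\bigr),
\]
with GPS between $\wt{\cW}^{cu}_g$ and $\wt{\cW}^{s}_g$. These two foliations immediately give dynamical coherence of $g$, and their induced center foliation is $\wt{\cW}^{c}_g(\wt x)=\wt{\cW}^{cs}_g(\wt x)\cap\wt{\cW}^{cu}_g(\wt x)$.

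For the center-fibered property I would intersect the two identities above:
\[
\wt{\cW}^{c}_g(\wt x)\;=\;H_g^{-1}\bigl(\cW^{s}_{\wt f_c}(H_g(\wt x))\bigr)\cap H_g^{-1}\bigl(\cW^{u}_{\wt f_c}(H_g(\wt x))\bigr)\;=\;H_g^{-1}\bigl(\cW^{s}_{\wt f_c}(H_g(\wt x))\cap\cW^{u}_{\wt f_c}(H_g(\wt x))\bigr).
\]
By the GPS condition \ref{c3hyphom} in Definition \ref{dfhyphom} (recalled in \ref{fph3}), the stable and unstable leaves of $\wt f_c$ through a common point intersect only at that point, so $\cW^{s}_{\wt f_c}(\xc)\cap\cW^{u}_{\wt f_c}(\xc)=\{\xc\}$, giving $\wt{\cW}^{c}_g(\wt x)=H_g^{-1}(H_g(\wt x))$ as required. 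Finally, the global product structure of $g$ is precisely the conjunction of the two GPS statements recorded in the two applications of Theorem \ref{tcdi}, so nothing more needs to be proved.

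Essentially no obstacle remains since Theorem \ref{tcdi} has absorbed all the work; the only thing to check carefully is that its proof really is symmetric under $s\leftrightarrow u$, which amounts to noting that every ingredient used (SADC with GPS, the shadowing/expansivity of $\wt f_c$, Corollary \ref{c.pre}) is stated symmetrically in the two strong directions.
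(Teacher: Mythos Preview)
Your proposal is correct and is exactly the argument the paper has in mind: the corollary is stated immediately after Theorem \ref{tcdi} with no proof, since it is just the combination of the $cs$ and $cu$ applications of that theorem, the intersection of the two preimage identities (using that $\cW^{s}_{\wt f_c}(\xc)\cap\cW^{u}_{\wt f_c}(\xc)=\{\xc\}$ from \ref{fph3}), and the two GPS conclusions. Nothing is missing.
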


We end this section with a proposition which finishes the proof of the equivalence between dynamically coherence and center fibered, with $\sigma$ properness and SADC (in presence of global product structure).  

\begin{prop} \label{propequivfinal}
	If $g\in \textnormal{PH}_f(M)$ is dynamically coherent, center fibered and has global product structure, then it is $\sigma$-proper ($\sigma=s,u$) and \textnormal{SADC} with global product structure. 
	\end{prop}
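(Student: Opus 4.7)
The plan is to show the three conclusions ($s$-properness, $u$-properness, and SADC with GPS) without introducing any new foliations: the dynamically coherent $cs$- and $cu$-foliations of $g$ themselves will witness SADC. I would begin with $\sigma$-properness, which by Lemma \ref{equipro} reduces to the injectivity statement $(I^\sigma)$. If $\wt y\in\wt\cW^u_g(\wt x)$ satisfies $H_g(\wt y)=H_g(\wt x)$, then the center-fibered assumption forces $\wt y\in \wt\cW^c_g(\wt x)\subset \wt\cW^{cs}_g(\wt x)$, and the hypothesized global product structure between $\wt\cW^u_g$ and $\wt\cW^{cs}_g$ yields $\wt y=\wt x$. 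The argument for $(I^s)$ is symmetric, using $\wt\cW^{cu}_g$ in place of $\wt\cW^{cs}_g$.

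For the SADC conclusion, I take $\cF^{cs}_g:=\cW^{cs}_g$ and $\cF^{cu}_g:=\cW^{cu}_g$, both available from dynamical coherence. Transversality to $E^u_g$ and $E^s_g$ is inherent in partial hyperbolicity, and the GPS required in the conclusion is literally the GPS hypothesis on $g$. The only nontrivial task is the almost parallel property; I describe the argument for $\wt\cW^{cs}_g$ vs $\wt\cW^{cs}_f$, the $cu$-case being identical after swapping $s$ with $u$ and using $s$-properness instead of $u$-properness.

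The key identity to exploit is
\[ H_g\bigl(\wt\cW^{cs}_g(\wt x)\bigr)=\cW^s_{\wt f_c}(H_g(\wt x)), \]
where $\subseteq$ follows from center-fiberedness together with the fact that $H_g$ carries $\wt\cW^s_g$-leaves into $\cW^s_{\wt f_c}$-leaves, and $\supseteq$ follows from the surjectivity $(S^s)$ provided by Lemma \ref{equipro}. Choosing $\wt x_1$ with $\wt p(\wt x_1)=H_g(\wt x)$ makes $\wt\cW^{cs}_f(\wt x_1)=\wt p^{-1}(\cW^s_{\wt f_c}(H_g(\wt x)))$, so both $cs$-leaves project to the same subset of $\wt M_c$, under $H_g$ and $\wt p$ respectively. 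For $\wt y\in \wt\cW^{cs}_g(\wt x)$, Theorem \ref{thmsta}(2) gives $dist(\wt p(\wt y),\cW^s_{\wt f_c}(H_g(\wt x)))\le\alpha$; Corollary \ref{c.pre}(1) then places $\wt p(\wt y)$ in an unstable disk $D^u([\wt w],R_1)$ with $[\wt w]\in \cW^s_{\wt f_c}(H_g(\wt x))$, and lifting via the homeomorphism $\wt p\colon \wt\cW^u_f(\wt y)\to \cW^u_{\wt f_c}(\wt p(\wt y))$ produces $\wt z\in \wt\cW^u_f(\wt y)\cap \wt\cW^{cs}_f(\wt x_1)$ whose leafwise, hence ambient, distance from $\wt y$ is uniformly bounded by Corollary \ref{c.pre}(2). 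The reverse inclusion is symmetric: given $\wt z\in \wt\cW^{cs}_f(\wt x)$, define $\wt y=\wt\cW^u_g(\wt z)\cap \wt\cW^{cs}_g(\wt x_2)$ via GPS of $g$ (with $\wt x_2$ chosen so that $H_g(\wt x_2)\in \cW^s_{\wt f_c}([\wt x])$, possible by surjectivity of $H_g$ from Theorem \ref{thmsta}), and then convert the estimate $dist(H_g(\wt y),H_g(\wt z))\le R_1$, obtained from Corollary \ref{c.pre}(1) applied at $H_g(\wt z)$, into a uniform bound on $d_{\wt\cW^u_g}(\wt y,\wt z)$ via the $u$-properness from the first step.

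The delicate step is the almost parallel estimate: although $H_g$ and $\wt p$ are bounded $C^0$-distance apart, the $f$- and $g$-center foliations need not coincide and their leaves are typically unbounded, so bounded Hausdorff distance between $cs$-leaves is not automatic. What rescues the argument is the combination of Corollary \ref{c.pre}, which translates an $\alpha$-neighbourhood of a quotient stable leaf into a uniform leafwise unstable bound both downstairs and upstairs, together with the freshly obtained $u$-properness of $g$, which plays on the $g$-side exactly the role that Corollary \ref{c.pre}(2) plays on the $f$-side.
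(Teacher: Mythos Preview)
Your argument is correct and follows essentially the same route as the paper: the $\sigma$-properness step is identical (injectivity of $H_g$ on strong leaves via center-fibered plus GPS, then Lemma~\ref{equipro}), and for SADC you take $\cF^{cs}_g=\cW^{cs}_g$, $\cF^{cu}_g=\cW^{cu}_g$ and use the identity $H_g(\wt\cW^{cs}_g(\wt x))=\cW^s_{\wt f_c}(H_g(\wt x))$ together with $d_{C^0}(H_g,\wt p)<\alpha$, exactly as the paper does. The only difference is that the paper dismisses the almost-parallel conclusion as ``quite direct'' from this identity, while you spell out how to pass from the bounded distance in $\wt M_c$ to a bounded Hausdorff distance upstairs via Corollary~\ref{c.pre} and the freshly obtained $u$-properness; this is a genuine detail the paper skips, and your treatment of it is sound.
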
	

\begin{proof}
	Take a dynamically coherent and center fibered $g\in \textnormal{PH}_f(M)$, such that $\wt{\cW}^{cs}_g$ and $\wt{\cW}^{u}_g$ have global product structure, and $\wt{\cW}^{cu}_g$ and $\wt{\cW}^{s}_g$ have global product structure. Suppose that there is $\wt{y} \in \wt{\cW}^{u}_g(\wt{x})$ such that $H_g(\wt{y})=H_g(\wt{x})$. Then by center fibered this implies that $\wt{y} \in \wt{\cW}^c_g(\wt{x})\subset \wt{\cW}^{cs}_g(\wt{x})$. But then $\{\wt{x},\wt{y}\}\in \wt{\cW}^{u}_g(\wt{x})\cap \wt{\cW}^{cs}_g(\wt{x})$ which violates the global product structure. This implies that $H_g|_{\wt{\cW}^{u}_g}$ is injective, and therefore $g$ is $u$-proper by Lemma \ref{equipro}. The case $s$-proper is exactly the same.
	
	Now recall that $\cW^{cs}_g$ and $\cW^{cu}_g$ are uniformly transverse to $E^{u}_g$ and $E^{s}_g$ respectively, and so in order to prove that $g$ is SADC, it remains to show that $\wt{\cW}^{cs}_g$ and $\wt{\cW}^{cu}_g$ are almost parallel to the center-stable and center-unstable foliations of $f$. This is quite direct, since $\sigma$-properness, global product structure and center fibered implies that  $H_g(\wt{\cW}^{cs}_g(\wt{x}))=\cW^{s}_{\wt{f}_c}(H_g(\wt{x}))$ and $H_g(\wt{\cW}^{cu}_g(\wt{x}))=\cW^{u}_{\wt{f}_c}(H_g(\wt{x}))$ for every $\wt{x}\in \wt{M}$. This implies SADC because $H_g$ is at bounded distance from $\wt{p}=H_f$.  
\end{proof}

\section{Dynamical coherence is open and closed} \label{sectionopenandclosed}

To obtain the main theorem of this article, we have to prove that SADC, $\sigma$-properness ($\sigma=s,u$) and global product structure (between the strong stable/unstable manifolds and the ones given by SADC) are $C^1$ open and closed properties among partially hyperbolic diffeomorphisms in $M$ isotopic to $f$. Then we can apply Corollary \ref{ccdi} to a whole connected component as long as it contains a diffeomorphism with such properties.

\subsection{SADC is $C^{1}$ open.}

\begin{prop} \label{tsadca} \textnormal{SADC} is a $C^1$ open property among $\textnormal{PH}_f(M)$.
\end{prop}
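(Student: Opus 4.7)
The plan is to show that SADC persists under $C^1$ perturbations by simply reusing the same witness foliations. Let $g\in \text{PH}_f(M)$ be SADC with foliations $\mathcal{F}^{cs}_g$ and $\mathcal{F}^{cu}_g$ satisfying the two conditions of the definition. First I would observe that the ``almost parallel'' condition compares $\wt{\mathcal{F}}^{cs}_g$ and $\wt{\mathcal{F}}^{cu}_g$ only with the fixed foliations $\wt{\cW}^{cs}_f$ and $\wt{\cW}^{cu}_f$ of the reference map $f$, so it has nothing to do with $g$ itself beyond the labelling. Thus if I simply declare $\mathcal{F}^{cs}_{g'} := \mathcal{F}^{cs}_g$ and $\mathcal{F}^{cu}_{g'} := \mathcal{F}^{cu}_g$ for any candidate $g'$, the second bullet in the definition of SADC is automatic.

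The remaining task is to verify the transversality of $\mathcal{F}^{cs}_g$ to $E^u_{g'}$ and of $\mathcal{F}^{cu}_g$ to $E^s_{g'}$. By compactness of $M$, the transversality of $\mathcal{F}^{cs}_g$ to $E^u_g$ is uniform: there exists $\theta_0>0$ such that $\angle(T_x\mathcal{F}^{cs}_g,\,E^u_g(x))\ge \theta_0$ for every $x\in M$, and likewise $\angle(T_x\mathcal{F}^{cu}_g,\,E^s_g(x))\ge \theta_0$. The key standard input I would invoke is that the strong bundles $E^u_{g'}$ and $E^s_{g'}$ depend continuously, in the $C^0$ topology, on $g'$ varying in the $C^1$ topology within the set of partially hyperbolic diffeomorphisms (with the same bundle dimensions). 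Hence in a sufficiently small $C^1$ neighborhood $\cU$ of $g$ in $\text{PH}_f(M)$, we have $\angle(E^u_{g'}(x),E^u_g(x))<\theta_0/2$ and $\angle(E^s_{g'}(x),E^s_g(x))<\theta_0/2$ pointwise, so $\mathcal{F}^{cs}_g$ stays transverse to $E^u_{g'}$ and $\mathcal{F}^{cu}_g$ stays transverse to $E^s_{g'}$, with a uniform angle of at least $\theta_0/2$. This verifies both conditions of SADC for $g'$ with the same witness foliations.

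In summary, the witness foliations themselves do not need to be perturbed; only the bundles $E^u_{g'}$ and $E^s_{g'}$ move, and they do so continuously in $C^0$ with respect to the $C^1$ topology on diffeomorphisms. There is no serious obstacle: the only point to double-check is the uniform angle bound for the original transversality, which is immediate from compactness of $M$, together with the standard continuous dependence of the strong bundles. Note that isotopy-to-$f$ is preserved in any connected $C^1$-neighborhood, so $g'\in\text{PH}_f(M)$ whenever $g'\in\cU$ is close enough, and the conclusion follows.
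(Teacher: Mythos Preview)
Your proposal is correct and follows essentially the same approach as the paper: reuse the same witness foliations $\mathcal{F}^{cs}_g$, $\mathcal{F}^{cu}_g$ for nearby $g'$, note that the almost-parallel condition is automatic since it only involves the fixed foliations of $f$, and then use the uniform angle bound together with the $C^0$-continuity of $E^u_{g'}$, $E^s_{g'}$ under $C^1$ perturbations of $g$ to recover transversality with angle at least $\theta_0/2$. The paper's proof is the same argument, written out with the explicit triangle inequality for angles.
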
 
\begin{proof}
	This is pretty direct since the same foliation works by the continuity of the $E^{s}$ and $E^{u}$ bundles. Take $g\in \text{PH}_f(M)$ with SADC property and let $\mathcal{F}^{cs}_{g}$, $\mathcal{F}^{cu}_{g}$ be the foliations given by the SADC property. These foliations are transverse to $E^{u}_g$, $E^{s}_g$ and their lifts are almost parallel to $\wt{\cW}^{cs}_f$ and $\wt{\cW}^{cu}_f$ respectively. Then $\angle (\wt{\cF}^{cs}_g(\wt{x}),E^{u}_g(\wt{x}))>\e$ for every $\wt{x}\in \wt{M}$ and there is $\mathcal{U}(g)$ a neighbourhood of $g$ in the $C^{1}$ topology s.t. for every $g' \in \mathcal{U}(f)$ we have $\angle (E^{u}_{g'}(\wt{x}),E^{u}_{g}(\wt{x}))<\frac{\epsilon}{2}$ for every $\wt{x}\in \wt{M}$. Take $\mathcal{F}^{cs}_{g'}=\mathcal{F}^{cs}_{g}$, then
	\begin{eqnarray*}
		\angle(\wt{\cF}^{cs}_{g'}(\wt{x}),E^{u}_{g'}(\wt{x}))+\frac{\e}{2} & > & \angle (\wt{\cF}^{cs}_{g'}(\wt{x}),E^{u}_{g'}(\wt{x}))+\angle (E^{u}_{g'}(\wt{x}),E^{u}_{g}(\wt{x}))\\
		&\geq & \angle (\wt{\cF}^{cs}_{g'}(\wt{x}),E^{u}_{g}(\wt{x}))\\
		&=&\angle (\wt{\cF}^{cs}_g(\wt{x}),E^{u}_g(\wt{x}))>\e >0
	\end{eqnarray*}
	This implies that $\angle (\wt{\cF}^{cs}_{g'}(\wt{x}),E^{u}_{g'}(\wt{x}))>\frac{\e}{2}$ for every $\wt{x} \in \wt{M}$.
	Then every $g'\in \mathcal{U}(g)$ has foliations $\mathcal{F}^{cs}_{g'}$, $\mathcal{F}^{cu}_{g'}$ transverse to $E^{u}_{g'}$, $E^{s}_{g'}$ and thus each $g'\in \mathcal{U}(g)$ verifies SADC.
\end{proof}	

\subsection{$\sigma$-proper is $C^1$ open}

The following is a classical fact about hyperbolic theory that we will need for the next proposition.

\begin{rem} \label{rkhyp}
	Given $f\in \text{PH}(M)$, there exist constants $1<\lambda_{f}<\Delta_{f}$ and there exists $\mathcal{U}$ a $C^{1}$-neighbourhood of $f$ s.t. for every $g\in \mathcal{U}$, $\wt{x}\in \wt{M}$ and $R>0$ we have:
	$$\wt{\cW}^{u}_g(\wt{g}(\wt{x}),\lambda_{f}R)\subset \wt{g}(\wt{\cW}^{u}_g(\wt{x},R))\subset \wt{\cW}^{u}_g(\wt{g}(\wt{x}),\Delta_{f}R)
	$$ Analogously for $\wt{\cW}^{s}_g$ by applying $\wt{g}^{-1}$.
\end{rem}

\begin{prop} \label{pspopen}
	
	$\sigma$-proper is a $C^1$ open property among $\textnormal{PH}_f(M)$, for $\sigma=s,u$. 
	
	\begin{proof}
		Given $g\in \text{PH}_f(M)$ that is $\sigma$-proper, we must find a neighbourhood $\mathcal{U}(g)$ in the $C^{1}$ topology such that every $g'\in \mathcal{U}(g)$ is $\sigma$-proper. Remark \ref{osp1} says that it's enough to find a neighbourhood $\mathcal{U}(g)$ and $R>0$ such that for every  $g'\in \mathcal{U}(g)$, $\wt{x}\in \wt{M}$ :
		$$ (H_{g'})^{-1}(D^{\sigma}(H_{g'}(\wt{x}),1))\cap \wt{\cW}_{g'}^{\sigma}(\wt{x})\subseteq \wt{\cW}^{\sigma}_{g'}(\wt{x},R)
		$$
		Since $g$ is $\sigma$-proper, we know that $H_g|_{\wt{\cW}^{\sigma}_g(\wt{x})}:\wt{\cW}^{\sigma}_g(\wt{x})\to \cW^\sigma_{\wt{f}_c}(H_g(\wt{x}))$ is a homeomorphism. Then there is $R_{1}>0$ s.t. $$H_g(\wt{\cW}^{\sigma}_g(\wt{x},R_1)^c)\cap D^{\sigma}(H_g(\wt{x}),2)=\emptyset ;\forall\;\wt{x} \in \wt{M}. $$
		Call $A^{\sigma}_{r,R,g'}(\wt{x})$ the annulus $\wt{\cW}^{\sigma}_{g'}(\wt{x},R)\setminus \wt{\cW}^{\sigma}_{g'}(\wt{x},r)$ for $R>r>0$. Then for $R_{2}>\Delta_{g}R_{1}$ we have that
		$$H_g(A^{\sigma}_{R_1,R_2,g}(\wt{x}))\cap D^{\sigma}(H_g(\wt{x}),2)=\emptyset
		$$ where we take $\Delta_g>1$ like in Remark \ref{rkhyp}. Now since $H_g$ is continuous and $\Gamma$-invariant, it is uniformly continuous. Then there is $\e_1>0$ s.t. if $d(\wt{x},\wt{y})<\e_1$ then $d(H_g(\wt{x}),H_g(\wt{y}))<1/4$. Take the following $C^{1}$-neighbourhoods:
		\begin{itemize}
			\item From uniform hyperbolicity there is $\cU_1(g)$ such that the constants $\Delta_{g}$ and $\lambda_{g}$ are uniform in $\cU_1(g)$ (see Remark \ref{rkhyp}).
			\item The continuous variation of the leaves in the $C^{1}$ topology says that for every $\epsilon_{1}>0$ and $R_{2}>0$, there is $\cU_2(g)$ and $\delta>0$ s.t. for every $g'\in \cU_{2}(g)$ and every pair of points $\wt{x},\wt{y}$ with $d(\wt{x},\wt{y})<\delta$ we have $d_{C^{1}}(\wt{\cW}^{\sigma}_{g'}(\wt{x},R_{2}),\wt{\cW}^{\sigma}_{g'}(\wt{y},R_{2}))<\epsilon_{1}$.
			\item Take $\cU_{3}(g)=\{g'\in \textnormal{PH}_{f}(M): d_{C^{0}}(H_{g'},H_g)<1/4\}$.
		\end{itemize}
		Finally take $\cU_g:=\cU_1(g)\cap\cU_2(g)\cap \cU_3(g)$.
		Now, let $g'\in \cU(g)$ and $\wt{x},\wt{y}$ such that $\wt{y}\in A^{\sigma}_{R_1,R_2,g'}(\wt{x})$. Then there is $\wt{z}\in A^{\sigma}_{R_1,R_2,g}(\wt{x})$ such that $d(\wt{z},\wt{y})<\e_1$ and from uniform continuity we get $dist(H_g(\wt{z}),H_g(\wt{y}))<1/4$. Since $\wt{z}\in A^{\sigma}_{R_1,R_2,g}(\wt{x})$ and $dist(H_g(\wt{z}),H_g(\wt{y}))<1/4$, applying the triangular inequality we obtain:
		\begin{eqnarray*}
			2&<&dist(H_g(\wt{z}),H_g(\wt{x}))\leq dist(H_g(\wt{z}),H_g(\wt{y}))+dist(H_g(\wt{y}),H_g(\wt{x})) \\ 
			&\leq & 1/4+ dist(H_g(\wt{y}),H_g(\wt{x}))
		\end{eqnarray*}
		Therefore $dist(H_g(\wt{y}),H_g(\wt{x}))>2-1/4$. Once again the triangular inequality gives:
		\begin{eqnarray*}
			2-1/4 &<& dist(H_g(\wt{y}),H_g(\wt{x})) \\
			&\leq& dist(H_g(\wt{y}),H_{g'}(\wt{y}))+dist(H_{g'}(\wt{y}),H_{g'}(\wt{x}))+dist(H_{g'}(\wt{x}),H_g(\wt{x})) \\
			&\leq & 1/4 +dist(H_{g'}(\wt{y}),H_{g'}(\wt{x}))+1/4
		\end{eqnarray*} and we conclude that $dist(H_{g'}(\wt{y}),H_{g'}(\wt{x}))>2-3/4>1$, which means 
		\begin{equation} \label{ecanillo}
			H_{g'}(A^{\sigma}_{R_1,R_2,g'}(\wt{x}))\cap D^{\sigma}(H_{g'}(\wt{x}),1)=\emptyset \ \ \text{for every} \ \ \wt{x}\in \wt{M}
		\end{equation} 
		Finally this implies
		$$(H_{g'})^{-1}(D^{\sigma}(H_{g'}(\wt{x}),1))\cap \wt{\cW}_{g'}^{\sigma}(\wt{x})\subseteq \wt{\cW}_{g'}^{\sigma}(\wt{x},R_1) \ \ \text{for every} \ \ \wt{x}\in \wt{M}
		$$
		If it weren't the case, there will be $\wt{y}\in \wt{\cW}^{\sigma}_{g'}(\wt{x})$ such that $H_{g'}(\wt{y})\in D^{\sigma}(H_{g'}(\wt{x}),1)$ but $\wt{y}\notin \wt{\cW}^{\sigma}_{g'}(\wt{x},R_2)$. By the choice of $\Delta_g$ we know that there is $n\in \Z$ (positive if $\sigma=s$ or negative if $\sigma=u$) s.t. $\wt{g'}^n(\wt{y})\in A^{\sigma}_{R_1,R_2,g'}(\wt{g'}^n(\wt{x}))$ and $H_{g'}(\wt{g'}^n(\wt{y}))\in D^{\sigma}(H_{g'}(\wt{g'}^n(\wt{x})),1)$. This contradicts \eqref{ecanillo} above. 
	\end{proof}
	
\end{prop}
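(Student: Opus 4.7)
The approach I would take is the classical \emph{annular exclusion} argument, combining uniform hyperbolicity with an iteration step and the three continuity inputs available: (i) uniform hyperbolicity constants $\lambda_g,\Delta_g$ of Remark \ref{rkhyp} remain valid on a $C^1$-neighborhood; (ii) $C^1$-continuity of finite pieces of strong leaves $\wt{\cW}^\sigma_{g'}(\wt{x},R_2)$ in both $g'$ and the basepoint $\wt{x}$; (iii) $C^0$-continuity $g'\mapsto H_{g'}$ provided by Theorem \ref{thmsta}. By Remark \ref{osp1} it suffices to establish the defining inclusion of $\sigma$-properness with $R=1$.

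First I would exploit that $g$ is $\sigma$-proper: by Lemma \ref{equipro}, $H_g$ restricts to a homeomorphism $\wt{\cW}^\sigma_g(\wt{x})\to\cW^\sigma_{\wt{f}_c}(H_g(\wt{x}))$, so there is a uniform $R_1>0$ with $H_g(\wt{\cW}^\sigma_g(\wt{x})\setminus\wt{\cW}^\sigma_g(\wt{x},R_1))\cap D^\sigma(H_g(\wt{x}),2)=\emptyset$ for every $\wt{x}$; uniformity in $\wt{x}$ follows from $\Gamma$-invariance of $H_g$ and compactness of $M$. I then enlarge to $R_2>\Delta_g R_1$ so that $H_g$ still sends the annulus $A^\sigma_{R_1,R_2,g}(\wt{x})=\wt{\cW}^\sigma_g(\wt{x},R_2)\setminus\wt{\cW}^\sigma_g(\wt{x},R_1)$ outside $D^\sigma(H_g(\wt{x}),2)$, and the extra factor $\Delta_g$ guarantees that any point outside $\wt{\cW}^\sigma_{g'}(\wt{x},R_2)$ has some iterate (forward for $\sigma=s$, backward for $\sigma=u$) landing in the corresponding annulus, since in that direction leafwise distance contracts by at least $1/\Delta_{g'}$ per step.

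Next, using uniform continuity of the $\Gamma$-periodic map $H_g$, I would pick $\varepsilon_1>0$ with $d(\wt{x},\wt{y})<\varepsilon_1$ implying $dist(H_g(\wt{x}),H_g(\wt{y}))<1/4$, and then shrink the $C^1$-neighborhood $\mathcal{U}(g)$ using (i)--(iii) so that for every $g'\in\mathcal{U}(g)$: the hyperbolicity constants remain uniformly bounded; the leaf $\wt{\cW}^\sigma_{g'}(\wt{x},R_2)$ is $C^1$-close (hence pointwise $\varepsilon_1$-close) to $\wt{\cW}^\sigma_g(\wt{x},R_2)$; and $d_{C^0}(H_{g'},H_g)<1/4$. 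For $\wt{y}\in A^\sigma_{R_1,R_2,g'}(\wt{x})$, pick a nearby $\wt{z}\in A^\sigma_{R_1,R_2,g}(\wt{x})$ within $\varepsilon_1$. Three $1/4$-applications of the triangle inequality (one from uniform continuity of $H_g$, two from $d_{C^0}(H_{g'},H_g)<1/4$), starting from $dist(H_g(\wt{z}),H_g(\wt{x}))>2$, yield $dist(H_{g'}(\wt{y}),H_{g'}(\wt{x}))>1$, which is the desired annular exclusion for $g'$.

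Finally, if some $\wt{y}\in \wt{\cW}^\sigma_{g'}(\wt{x})\setminus\wt{\cW}^\sigma_{g'}(\wt{x},R_1)$ satisfied $H_{g'}(\wt{y})\in D^\sigma(H_{g'}(\wt{x}),1)$, then by the choice of $R_2$ some iterate $\wt{g'}^n(\wt{y})$ lies in the annulus $A^\sigma_{R_1,R_2,g'}(\wt{g'}^n(\wt{x}))$; yet by the semiconjugacy $H_{g'}\circ\wt{g'}=\wt{f}_c\circ H_{g'}$ and the uniform contraction property \ref{c1hyphom} of $\wt{f}_c$ on $\sigma$-disks of size $\le 1$, $H_{g'}(\wt{g'}^n(\wt{y}))$ remains in $D^\sigma(H_{g'}(\wt{g'}^n(\wt{x})),1)$, contradicting the annular exclusion from the previous paragraph. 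The main obstacle, more technical than conceptual, is making every continuity input genuinely uniform over $\wt{x}\in\wt{M}$; this is resolved by $\Gamma$-invariance of $H_g$, $H_{g'}$ and of the strong foliations, effectively reducing all estimates to a compact fundamental domain.
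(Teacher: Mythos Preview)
Your proposal is correct and follows essentially the same route as the paper's own proof: the same reduction via Remark \ref{osp1}, the same choice of $R_1$ and $R_2>\Delta_g R_1$, the same three $C^1$-neighborhoods (uniform hyperbolicity constants, $C^1$-continuity of finite leaf pieces, and $d_{C^0}(H_{g'},H_g)<1/4$), the same triple $1/4$-triangle-inequality chain yielding the annular exclusion $H_{g'}(A^\sigma_{R_1,R_2,g'}(\wt{x}))\cap D^\sigma(H_{g'}(\wt{x}),1)=\emptyset$, and the same final iteration step to pull any offending point into the annulus. Your explicit invocation of the semiconjugacy and condition \ref{c1hyphom} of Definition \ref{dfhyphom} in the last step is in fact a small clarification over the paper, which leaves that point implicit.
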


\subsection{SADC + $\sigma$-proper + GPS is $C^1$ open}

In this subsection we are going to prove that given $g\in \text{PH}_f(M)$ which is $\sigma$-proper and SADC with global product structure, then every $g'$ sufficiently $C^1$ close to $g$ is also $\sigma$-proper and SADC with global product structure (maybe with a different foliation than the original one). 

\begin{prop} \label{gpsopen}
	Let $g\in \textnormal{PH}_f(M)$ be such that $g$ is $\sigma$-proper, for $\sigma=s,u$, and SADC with global product structure. Then there is a $C^1$ neighbourhood $\cU$ of $g$ such that every $g'\in \cU$ is $\sigma$ proper and SADC with global product structure. 
	
\end{prop}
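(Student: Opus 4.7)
The plan is to exploit the fact that Propositions~\ref{tsadca} and~\ref{pspopen} already give the $C^1$-openness of SADC and of $\sigma$-properness, so the only genuinely new content of Proposition~\ref{gpsopen} is the global product structure for nearby $g'$. I would first apply Corollary~\ref{ccdi} to $g$ itself: by hypothesis $g$ is $\sigma$-proper and SADC with GPS, so $g$ is dynamically coherent, center-fibered and has GPS between its own invariant foliations $\wt{\cW}^{cs}_g,\wt{\cW}^{u}_g$ and $\wt{\cW}^{cu}_g,\wt{\cW}^{s}_g$. Moreover, by the proof of Proposition~\ref{propequivfinal}, the actual center-stable and center-unstable foliations $\cW^{cs}_g,\cW^{cu}_g$ are themselves almost parallel to $\wt{\cW}^{cs}_f,\wt{\cW}^{cu}_f$, so from now on I pick them as the SADC foliations for every nearby $g'$, i.e.\ $\cF^{cs}_{g'}:=\cW^{cs}_g$ and $\cF^{cu}_{g'}:=\cW^{cu}_g$. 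This choice is forced on us by the need for a foliation whose leaves are intrinsically described via $H_g$, which will be essential in the GPS step.

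With this choice, SADC for $g'$ is almost free: the foliations don't vary with $g'$, the almost-parallel property is inherited from $g$, and transversality to $E^{u}_{g'}$ (resp.\ $E^{s}_{g'}$) follows exactly as in Proposition~\ref{tsadca} because $E^{u}_{g'}$ is $C^0$-close to $E^{u}_g$. By Proposition~\ref{pspopen}, shrinking the neighbourhood if needed, $g'$ is also $s$- and $u$-proper, and by Theorem~\ref{thmsta}(3) we can also ensure $d_{C^0}(H_{g'},H_g)$ is arbitrarily small.

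The substantive step is then to show that $\wt{\cW}^{cs}_g$ and $\wt{\cW}^{u}_{g'}$ have GPS (and dually for the center-unstable case). For \emph{existence}, given $\wt{x},\wt{y}\in\wt{M}$, I use GPS in the hyperbolic quotient $\wt{M}_c$ to obtain a unique $[\wt{w}]\in\cW^{s}_{\wt{f}_c}(H_g(\wt{x}))\cap \cW^{u}_{\wt{f}_c}(H_{g'}(\wt{y}))$; $u$-properness of $g'$ provides a unique $\wt{z}\in\wt{\cW}^{u}_{g'}(\wt{y})$ with $H_{g'}(\wt{z})=[\wt{w}]$, and the aim is to check that $\wt{z}\in\wt{\cW}^{cs}_g(\wt{x})=H_g^{-1}(\cW^{s}_{\wt{f}_c}(H_g(\wt{x})))$, i.e.\ $H_g(\wt{z})\in\cW^{s}_{\wt{f}_c}(H_g(\wt{x}))$. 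I would do this by iterating forward under $\wt{g}$: from $\wt{z}\in\wt{\cW}^{u}_{g'}(\wt{y})$ the $\wt{g'}$-iterates $\wt{g'}^n(\wt{z})$ and $\wt{g'}^n(\wt{y})$ stay at bounded distance in the past, which by Proposition~\ref{p.propertiesc}(1) gives bounded distance in $\wt{M}_c$ between $\wt{f}_c^n H_{g'}(\wt{z})$ and $\wt{f}_c^n H_{g'}(\wt{y})$; combining this with $d_{C^0}(H_{g'},H_g)<\varepsilon$ and the fact that $\wt{g'}^n(\wt{z})$ and $\wt{g}^n(\wt{z})$ are uniformly close on every fixed finite time window for $g'$ close to $g$, a shadowing-type argument (mimicking the uniqueness part of Theorem~\ref{thmsta}) forces the bi-infinite orbit of $[H_g(\wt{z}),H_g(\wt{x})]$ under $\wt{f}_c$ to remain bounded, hence $H_g(\wt{z})$ and $H_g(\wt{x})$ lie in a common stable set by infinite expansivity.

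For \emph{uniqueness}, two intersection points $\wt{z}_1\ne\wt{z}_2$ in $\wt{\cW}^{cs}_g(\wt{x})\cap\wt{\cW}^{u}_{g'}(\wt{y})$ would give, via Theorem~\ref{tcdi} applied to $g$, two points on $\wt{\cW}^{u}_{g'}(\wt{y})$ with $H_g$-images in the same $\cW^{s}_{\wt{f}_c}(H_g(\wt{x}))$; iterating forward, $\wt{f}^n_c H_g(\wt{z}_i)$ stays uniformly bounded together, and since both $\wt{z}_i$ belong to the same unstable leaf of $g'$, a $C^0$-closeness argument between $H_g$ and $H_{g'}$ on $\wt{\cW}^{u}_{g'}(\wt{y})$ contradicts the $u$-properness of $g'$ (which forces $H_{g'}$-distance to grow along unstable leaves). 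The dual assertion for $\wt{\cW}^{cu}_g$ and $\wt{\cW}^{s}_{g'}$ is the same argument after reversing the dynamics. The main obstacle is precisely this shadowing-style comparison of $H_g$ and $H_{g'}$ along $g'$-unstable orbits: the foliations $\wt{\cW}^{cs}_g,\wt{\cW}^{u}_{g'}$ are not adapted to a common dynamics, so all the propagation of the local product structure to a global one has to be funnelled through the uniformly hyperbolic quotient $\wt{f}_c$, and one must carefully exploit that making $g'$ $C^1$-close to $g$ not only makes $H_{g'}$ $C^0$-close to $H_g$ but also makes the finite-time $g$- and $g'$-orbits close, which is what allows the infinite expansivity of $\wt{f}_c$ to be invoked.
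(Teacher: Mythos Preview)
Your setup matches the paper's: apply Corollary~\ref{ccdi} to $g$, replace the SADC foliations by the invariant $\cW^{cs}_g,\cW^{cu}_g$ (the paper stresses this replacement is needed precisely for $\Gamma$-invariance), and cite Propositions~\ref{tsadca} and~\ref{pspopen}. The gap is in your GPS argument, where you mix $H_g$ and $H_{g'}$ in a way that $C^0$-closeness cannot support. For existence you produce $\wt{z}\in\wt{\cW}^{u}_{g'}(\wt{y})$ with $H_{g'}(\wt{z})\in\cW^{s}_{\wt{f}_c}(H_g(\wt{x}))$ and then try to deduce $H_g(\wt{z})\in\cW^{s}_{\wt{f}_c}(H_g(\wt{x}))$; but $d_{C^0}(H_g,H_{g'})<\varepsilon$ only places $H_g(\wt{z})$ \emph{near} that stable set, not on it, and your shadowing sketch cannot close this: finite-time closeness of $\wt{g}$- and $\wt{g'}$-orbits does not propagate to all $n$, and nothing ties the full $\wt{g}$-orbit of $\wt{z}$ to that of $\wt{x}$. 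Uniqueness fails for the same reason: $u$-properness of $g'$ constrains $H_{g'}$ along $\wt{\cW}^u_{g'}$, not $H_g$, so two points on $\wt{\cW}^u_{g'}(\wt{y})$ whose $H_g$-images lie on a common stable leaf contradict nothing about $H_{g'}$.

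The paper avoids the quotient entirely at this step and argues locally. From the proof of Proposition~\ref{pspopen} one keeps the \emph{uniform} constant $R_1>0$ with $(H_{g'})^{-1}(D^u(H_{g'}(\wt{x}),1))\cap\wt{\cW}^u_{g'}(\wt{x})\subset\wt{\cW}^u_{g'}(\wt{x},R_1)$ for all nearby $g'$. The new ingredient is a local claim: for $g'$ in a further $C^1$-neighbourhood, $\wt{\cW}^{u}_{g'}(\wt{x},R_1)\cap\wt{\cW}^{cs}_g(\wt{x})=\{\wt{x}\}$ for every $\wt{x}$. This is pure transversality plus compactness of a fundamental domain, and it is exactly here that the $\Gamma$-invariance of $\wt{\cW}^{cs}_g$ is used (one covers a fundamental domain by finitely many balls where the local statement holds and intersects the corresponding neighbourhoods). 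Global uniqueness of $\wt{\cW}^u_{g'}(\wt{x})\cap\wt{\cW}^{cs}_g(\wt{y})$ then follows by combining these two facts, and existence from the GPS already enjoyed by $\wt{\cW}^u_g$ and $\wt{\cW}^{cs}_g$ together with $C^1$-closeness of the unstable leaves.
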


\begin{proof}
	Take $g\in \text{PH}_f(M)$ such that $g$ is $\sigma$-proper for $\sigma=s,u$, and SADC with their corresponding foliations $\wt{\cF}^{cs}_g$ and $\wt{\cF}^{cu}_g$, and suppose that $\wt{\cF}^{cs}_g$ and $\wt{\cW}^{u}_g$ have global product structure (the other case is symmetric). By Theorem \ref{tcdi} we now that $g$ is dynamically coherent, center-fibered and $\wt{\cW}^{cs}_g$ and $\wt{\cW}^{u}_g$ have global product structure. 
	
	Now we can replace $\wt{\cF}^{cs}_g$ by $\wt{\cW}^{cs}_g$ in the SADC definition of $g$ (i.e. with these new foliations $g$ is still SADC by Proposition \ref{propequivfinal}). We have to do this interchange because we need $\Gamma$-invariance of the foliations (this will be clear in a moment).  
	
	By Proposition \ref{tsadca} we know there is a $C^1$ neighbourhood $\cU_1$ of $g$ such that every $g'\in \cU_1$ is SADC (applying the proposition to $\wt{\cW}^{cs}_g$). 
	
	On the other hand by Proposition \ref{pspopen} there is a $C^1$ neighbourhood $\cU_2$ of $g$ such that every $g' \in \cU_2$ is $\sigma$-proper. Moreover we know there is $R_1>0$ such that:
	\begin{equation} \label{equation01}
		(H_{g'})^{-1}(D^{\sigma}(H_{g'}(\wt{x}),1))\cap \wt{\cW}_{g'}^{\sigma}(\wt{x})\subseteq \wt{\cW}_{g'}^{\sigma}(\wt{x},R_1)
	\end{equation}
	for every $\wt{x}\in \wt{M}$ and $g'\in \cU_2$.
	
	\begin{claim}
		There is a $C^1$ neighbourhood $\cU_3$ of $g$ such that for every $g'\in \cU_3$ and every $\wt{x}\in \wt{M}$ we have that: 
		\begin{equation} \label{equation02}
			\wt{\cW}^{u}_{g'}(\wt{x},R_1)\cap \wt{\cW}^{cs}_g(\wt{x})=\{\wt{x}\}
		\end{equation}
	\end{claim}
	\begin{proof} Just notice that for every $\wt{x}\in \wt{M}$ there is $\e(\wt{x})>0$ and a $C^1$ neighbourhood $\cU(\wt{x})$ of $g$ such that for every $g'\in \cU(\wt{x})$ and every $\wt{y}\in B(\wt{x},\e(\wt{x}))$ Equation \eqref{equation02} holds. Since $\wt{\cW}^{cs}_g$ is $\Gamma$ invariant, we can restrict ourselves to a compact fundamental domain. Then, we can cover this fundamental domain by finite balls $B(\wt{x}_1,\e(\wt{x_1})),\dots, B(\wt{x}_{N},\e(\wt{x}_N))$ and take $\cU_3=\cap_{j=1}^{N} \cU(\wt{x}_j)$. This proves the claim.
	\end{proof}
	To end the proof of the proposition, take $g'\in \cU:=\cU_1\cap \cU_2 \cap \cU_3$ and take two points $\wt{x},\wt{y}\in \wt{M}$. Now it is easy to see that $\wt{\cW}^{u}_{g'}(\wt{x})\cap \wt{\cW}^{cs}_g(\wt{y})$ is non empty. By Equation \eqref{equation01} and Equation \eqref{equation02} of the claim, we have that $\wt{\cW}^{u}_{g'}(\wt{x})\cap \wt{\cW}^{cs}_g(\wt{y})$ is exactly one point. This proves the global product structure between $\wt{\cW}^{u}_{g'}$ and $\wt{\cW}^{cs}_g$. 	
\end{proof}

\begin{rem}
	In the proof of the previous claim we need the foliation to be $\Gamma$-invariant, in order to restrict ourselves to points in a fundamental domain (to be able to take a finite cover). 
	That's why we interchange $\wt{\cF}^{cs}_g$ with $\wt{\cW}^{cs}_g$ in the proof. 
\end{rem}

\subsection{SADC + $\sigma$-proper + GPS is $C^1$ closed}

The previous proposition shows that $\sigma$-properness and SADC with global product structure are $C^1$ open among $\text{PH}_f(M)$. To finish the proof of the main theorem we have to prove that they are also $C^1$-closed properties. This is the most difficult part of the theorem. For the proof we are going to use once again Theorem \ref{tcdi}. We first show that SADC is closed among the ones having SADC and GPS. 

\begin{prop} \label{tsadcc}
	Let $g_n\in \text{PH}_f(M)$ such that $g_n \xrightarrow{C^1} g$ and every $g_n$ is SADC and GPS. Then $g$ satisfy SADC. \end{prop}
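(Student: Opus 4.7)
The plan is to essentially apply the $C^1$-openness of SADC (Proposition \ref{tsadca}) in reverse: for $n$ large, $g$ should lie in an open neighborhood of $g_n$ consisting of SADC diffeomorphisms, so $g$ inherits SADC from $g_n$. The heart of the matter is that an SADC foliation is a geometric object that does not carry the dynamics of the diffeomorphism; its conditions (almost parallelism to $\wt{\cW}^{cs}_f,\wt{\cW}^{cu}_f$ and transversality to the strong bundles) depend only on the foliation and on the strong bundles, both of which behave well under $C^1$ perturbation.

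Concretely, I would first note that since each $g_n$ lies in $\text{PH}_f(M)$ with fixed bundle dimensions and $g_n \to g$ in $C^1$, the limit $g$ is in $\text{PH}_f(M)$ and the strong bundles converge in $C^0$: $E^{s}_{g_n}\to E^{s}_g$ and $E^{u}_{g_n}\to E^{u}_g$ uniformly on a fundamental domain (and then on $\wt{M}$ by $\Gamma$-equivariance). Fix any $n$; let $\mathcal{F}^{cs}_{g_n}$, $\mathcal{F}^{cu}_{g_n}$ be foliations witnessing SADC of $g_n$, with uniform transversality angle $\e>0$ to $E^{u}_{g_n},E^{s}_{g_n}$, and with lifts at uniform Hausdorff distance $R$ from the corresponding leaves of $\wt{\cW}^{cs}_f,\wt{\cW}^{cu}_f$.

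Set $\mathcal{F}^{cs}_g:=\mathcal{F}^{cs}_{g_n}$ and $\mathcal{F}^{cu}_g:=\mathcal{F}^{cu}_{g_n}$ for $n$ large. Almost parallelism to $\wt{\cW}^{cs}_f,\wt{\cW}^{cu}_f$ is an intrinsic Hausdorff-distance property of the foliations alone, hence is unchanged. For transversality, the same computation as in the proof of Proposition \ref{tsadca} applies: taking $n$ large enough that $\angle(E^{\sigma}_{g_n}(\wt{x}),E^{\sigma}_g(\wt{x}))<\e/2$ uniformly in $\wt{x}$ ($\sigma=s,u$), the triangle inequality for angles gives
\[
\angle(\wt{\mathcal{F}}^{cs}_g(\wt{x}),E^{u}_g(\wt{x}))\geq \angle(\wt{\mathcal{F}}^{cs}_{g_n}(\wt{x}),E^{u}_{g_n}(\wt{x}))-\angle(E^{u}_{g_n}(\wt{x}),E^{u}_g(\wt{x}))>\e/2,
\]
and symmetrically for $\mathcal{F}^{cu}_g$ and $E^{s}_g$. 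Hence $g$ is SADC.

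I do not anticipate any serious obstacle for this step. The GPS hypothesis on the approximants is not actually used in this proposition; I expect it is included because the subsequent propositions in Section \ref{sectionopenandclosed} (closedness of $\sigma$-properness and of the GPS condition itself) will require it, and the authors are building up the pieces of the combined closedness statement. The only point to be slightly careful about is the uniform $C^0$ convergence of the strong bundles on the non-compact cover $\wt{M}$, which follows by lifting the convergence on a compact fundamental domain via the $\Gamma$-equivariance of the bundles.
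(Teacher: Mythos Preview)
Your argument has a genuine gap at the transversality step. You write ``Fix any $n$; let $\mathcal{F}^{cs}_{g_n}$ have uniform transversality angle $\e>0$ to $E^{u}_{g_n}$,'' and then ``take $n$ large enough that $\angle(E^{u}_{g_n},E^{u}_g)<\e/2$.'' But $\e=\e_n$ depends on $n$: the SADC definition only guarantees the \emph{existence} of some transverse foliation, with no lower bound on the angle. Nothing prevents $\e_n\to 0$ faster than $\angle(E^{u}_{g_n},E^{u}_g)\to 0$, in which case the triangle-inequality computation you wrote fails for every $n$. So you cannot simply reuse $\mathcal{F}^{cs}_{g_n}$ as the SADC foliation for $g$.

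The paper's proof repairs exactly this point. It fixes a single $n_1$ with $\angle(E^{u}_g,E^{cs}_{g_{n_1}})>\eta/2$ (where $\eta=\angle(E^{cs}_g,E^{u}_g)$), takes the given foliation $\mathcal{F}^{cs}_{n_1}$, and then iterates it \emph{backwards} by $g_{n_1}$: for some $n_2$, the foliation $g_{n_1}^{-n_2}(\mathcal{F}^{cs}_{n_1})$ lies in a cone of width $\eta/2$ around $E^{cs}_{g_{n_1}}$, hence is uniformly transverse to $E^{u}_g$ regardless of how small the original angle $\e_{n_1}$ was. The price is that one must check that the iterated foliation remains almost parallel to $\wt{\cW}^{cs}_f$. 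The paper does this via (a version of) Lemma~\ref{l.prod}, using the GPS hypothesis to write $\wt{\cW}^{cs}_f(\wt{y})$ as a union of short $\wt{\cW}^{u}_{g_{n_1}}$-arcs based on $\wt{\mathcal{F}}^{cs}_{n_1}(\wt{x})$, which contract under $\wt{g}_{n_1}^{-1}$. So contrary to your remark, GPS \emph{is} invoked in the paper's argument; that said, since only finitely many iterates are needed, a cruder Lipschitz estimate on $\wt{g}_{n_1}^{-n_2}$ would also preserve almost parallelism, so GPS is perhaps not indispensable for this particular proposition. The essential missing idea in your attempt is the backward iteration to gain control of the angle.
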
	
\begin{proof} 
	 Call $E^{cs}_n=E^{s}_{g_n}\oplus E^c_{g_n}$ and let ${\cF}^{cs}_n$, ${\cF}^{cu}_n$ be the foliations given by the SADC property for every $n\in \N$. By the $C^{1}$ convergence we have $E^{cs}_n\rightarrow E^{cs}_g$ and $E^{u}_n\rightarrow E^{u}_g$. Let $\eta=\angle (E^{cs}_g,E^{u}_g)$ (minimum bound of the angle). Now since $E^{cs}_n\rightarrow E^{cs}_g$ there is $n_1>0$ such that $\angle (E^{u}_g,E^{cs}_{n_1})>\frac{\eta}{2}$. Take ${\cF}^{cs}_{n_1}$ foliation uniformly transverse to $E^{u}_{n_1}$. Then there is $n_2>0$ such that $g_{n_1}^{-n_2}({\cF}^{cs}_{n_1})$ is contained in a cone centered at $E^{cs}_{n_1}$ of radius $\frac{\eta}{2}$. Thus $g_{n_1}^{-n_2}({\cF}^{cs}_{n_1})$ is uniformly transverse to $E^{u}_g$. Therefore, it is enough to show that $\wt{g}_{n_1}^{-n_2}(\wt{\cF}^{cs}_{n_1})$ is almost parallel to $\wt{\cW}^{cs}_f$. 
	
	To finish the proof, it's suffices  to show that if $\wt{\cF}^{cs}_g$ is almost parallel to $\wt{\cW}^{cs}_f$ and have GPS then $\wt{g}^{-1}(\wt{\cF}^{cs}_g)$ is almost parallel to $\wt{\cW}^{cs}_f$ as well. Let $R>0$ from the definition of almost parallel and let $\wt{x}, \wt{y}$ be such that $d_H(\wt{\cF}^{cs}_g(\wt{x}), \wt{\cW}^{cs}_f(\wt{y}))<R.$ We have to find $R'$ just depending on $R$ and $g$ such that $d_H(\wt{g}^{-1}(\wt{\cF}^{cs}_g(\wt{x})), \wt{\cW}^{cs}_f(\wt{f}^{-1}(\wt{y}))<R.$ Observe that\begin{eqnarray*}	
	d_H(\wt{g}^{-1}(\wt{\cF}^{cs}_g(\wt{x})), \wt{\cW}^{cs}_f(\wt{f}^{-1}(\wt{y}))&\le &d_H(\wt{g}^{-1}(\wt{\cF}^{cs}_g(\wt{x})), \wt{g}^{-1}(\wt{\cW}^{cs}_f((\wt{y})))\\& &+\;\; d_H(\wt{g}^{-1}(\wt{\cW}^{cs}_f((\wt{y})), \wt{\cW}^{cs}_f(\wt{f}^{-1}(\wt{y})))
	\end{eqnarray*}
By Lemma \ref{l.prod} applied to $g$ (the same exact proof works), there exists $r>0$ such that 	
	$$\wt{\cW}^{cs}_f(\wt{y})\subset\bigcup_{\wt{z}\in \wt{\cF}_g^{cs}(\wt{x})}\wt{\cW}^{u}_g(\wt{z}, r)$$
		and so $d_H(\wt{g}^{-1}(\wt{\cF}^{cs}_g(\wt{x})), \wt{g}^{-1}(\wt{\cW}^{cs}_f(\wt{y})))<\lambda_g^{-1}r.$
	On the other hand, $\wt{g}$ is at bounded distance from $\wt{f}$, say $K,$   and then $d_H(\wt{g}^{-1}(\wt{\cW}^{cs}_f(\wt{y})), \wt{f}^{-1}(\wt{\cW}^{cs}_f(\wt{y})))<K.$

\end{proof}

Before getting into the proof of the next theorem, let us define for $\sigma=s,u$:
$$\Pi^\sigma_{[\wt{x}]}:\wt{M}_c\to \cW^\sigma_{\wt{f}_c}([\wt{x}])$$
the holonomy by the corresponding foliation onto the leaf $\cW^\sigma_{\wt{f}_c}([\wt{x}])$.

\begin{thm} \label{tsadcypayc}
	For $\sigma=s,u$, being $\sigma$-proper and SADC with global product structure is a $C^1$-closed property in $\textnormal{PH}_f(M)$.
\end{thm}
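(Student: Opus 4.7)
The plan is to show that if $g_n \to g$ in $C^1$ with each $g_n \in \textnormal{PH}_f(M)$ being $\sigma$-proper and SADC with global product structure, then $g$ inherits all three properties. By Corollary \ref{ccdi} each $g_n$ is dynamically coherent and center-fibered, with $\wt{\cW}^{cs}_{g_n} = H_{g_n}^{-1}(\cW^s_{\wt{f}_c})$ enjoying GPS with $\wt{\cW}^u_{g_n}$ (and symmetrically for $cu,s$). SADC for $g$ is then immediate from Proposition \ref{tsadcc}: take $\wt{\cF}^{cs}_g := \wt{\cW}^{cs}_{g_{n_0}}$ for $n_0$ sufficiently large, which is $g_{n_0}$-invariant (so unaffected by the backward iteration in that proof) and, for $n_0$ large, lies in a narrow cone around $E^{cs}_g$ transverse to $E^u_g$; the $cu$-side is symmetric.

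For $\sigma$-properness at the limit, I will run a uniform-constant limit argument. Apply Proposition \ref{pspopen} to a fixed $g_{n_0}$ to obtain a $C^1$-neighborhood $\cU'_{n_0}$ on which the $u$-properness constant is uniformly equal to some $R_1$; for $n \geq N$, $g_n \in \cU'_{n_0}$, so $(H_{g_n})^{-1}(D^u(H_{g_n}(\wt{x}),1)) \cap \wt{\cW}^u_{g_n}(\wt{x}) \subset \wt{\cW}^u_{g_n}(\wt{x},R_1)$ uniformly in $n$. Fix $\wt{x}\in\wt{M}$ and $\wt{y}\in \wt{\cW}^u_g(\wt{x})$ with $\text{dist}(H_g(\wt{x}),H_g(\wt{y}))<1$; using $C^1$-continuous dependence of strong unstable parametrizations on compact sets, pick $\wt{y}_n\in \wt{\cW}^u_{g_n}(\wt{x})$ with $\wt{y}_n\to\wt{y}$ and $d_{\wt{\cW}^u_{g_n}}(\wt{x},\wt{y}_n)\to d_{\wt{\cW}^u_g}(\wt{x},\wt{y})$. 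By Theorem \ref{thmsta}(3), $\text{dist}(H_{g_n}(\wt{x}),H_{g_n}(\wt{y}_n))\to \text{dist}(H_g(\wt{x}),H_g(\wt{y}))<1$, so for $n$ large the uniform properness yields $d_{\wt{\cW}^u_{g_n}}(\wt{x},\wt{y}_n)\leq R_1$; passing to the limit gives $d_{\wt{\cW}^u_g}(\wt{x},\wt{y})\leq R_1$. Hence $g$ is $u$-proper with constant $R_1$; the $s$-case is symmetric.

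For GPS at the limit, apply Proposition \ref{gpsopen} to $g_{n_0}$ to obtain a neighborhood $\cU_{n_0}$ where $\wt{\cW}^{cs}_{g_{n_0}}$ has GPS with $\wt{\cW}^u_{g'}$ for every $g'\in \cU_{n_0}$, uniformly. For $n\geq N$, $g_n\in \cU_{n_0}$, so $\wt{z}_n := \wt{\cW}^{cs}_{g_{n_0}}(\wt{x})\cap \wt{\cW}^u_{g_n}(\wt{y})$ is a single point for each pair $\wt{x},\wt{y}$. Using the $u$-properness of $g$ just obtained together with the semi-conjugacy relation (the $H_{g_n}$-image of $\wt{z}_n$ converges to a definite point in $\wt{M}_c$) to locate $\wt{z}_n$ in a bounded region, extract a subsequence $\wt{z}_n\to \wt{z}\in \wt{\cW}^{cs}_{g_{n_0}}(\wt{x})\cap \wt{\cW}^u_g(\wt{y})$, giving existence. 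Uniqueness follows since two distinct intersection points for $g$ would have $H_g$-images lying in a common unstable leaf of $\wt{f}_c$ and, via $H_{g_{n_0}}$ (which is $C^0$-close to $H_g$) in a common stable leaf of $\wt{f}_c$, contradicting the $u$-properness-given injectivity of $H_g$ on $\wt{\cW}^u_g$-leaves. The $cu$ side is symmetric. The main obstacle throughout is securing the uniform $u$-properness constant $R_1$ across the sequence; this is exactly what Proposition \ref{pspopen}'s openness delivers on a single $C^1$-neighborhood of $g_{n_0}$ (containing the tail of $\{g_n\}$ but not necessarily $g$ itself), and the limit argument then transfers the bound to $g$.
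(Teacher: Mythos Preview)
Your argument has a genuine gap at its core. You write ``apply Proposition~\ref{pspopen} to a fixed $g_{n_0}$ to obtain a $C^1$-neighborhood $\cU'_{n_0}$ \dots; for $n\ge N$, $g_n\in\cU'_{n_0}$'', but this last assertion is unjustified. The size of $\cU'_{n_0}$ produced by Proposition~\ref{pspopen} depends on the $u$-properness constant $R_1^{(n_0)}$ of $g_{n_0}$ (through the choice of $R_2$, the modulus $\epsilon_1$, and the neighborhood $\cU_2$ in that proof). Since nothing prevents $R_1^{(n_0)}\to\infty$ along the sequence, the radius of $\cU'_{n_0}$ may shrink faster than $d_{C^1}(g_{n_0},g)$, so there is no reason the tail of $\{g_n\}$---which accumulates on $g$, not on $g_{n_0}$---ever sits inside $\cU'_{n_0}$. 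Without a uniform $R_1$ across the sequence your limit argument for $u$-properness of $g$ collapses, and the same circularity infects your GPS step (which invokes Proposition~\ref{gpsopen} in the identical way). There is also a second problem in your uniqueness argument for GPS: you claim two hypothetical intersection points $\wt{z},\wt{z}'$ would have $H_g$-images in a common stable leaf ``via $H_{g_{n_0}}$'', but it is $H_{g_{n_0}}(\wt{z}),H_{g_{n_0}}(\wt{z}')$ that lie in a common stable leaf, and $C^0$-closeness of $H_g$ to $H_{g_{n_0}}$ only places $H_g(\wt{z}),H_g(\wt{z}')$ in a bounded neighborhood of that leaf---this localizes them in a bounded unstable disk (via Corollary~\ref{c.pre}) but does not force equality.

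The paper avoids this trap by reversing the order of the two conclusions and never passing properness constants through a limit. It first proves directly that $\wt{\cW}^u_g$ and $\wt{\cW}^{cs}_{g_k}$ have global product structure for $k$ large: the key technical step (Claim~1) shows that \emph{any} disk of inner radius $\varepsilon$ tangent to a narrow $u$-cone projects under $\Pi^u_{H_{g_k}(\wt{x})}\circ H_{g_k}$ onto a disk of definite radius $\delta$ in $\cW^u_{\wt{f}_c}$, by working in finitely many local product structure boxes on $M$ and using only injectivity of $H_{g_k}$ on $\wt{\cW}^u_{g_k}$-leaves (no quantitative constant needed). This applies in particular to $\wt{\cW}^u_g$-disks, and an inductive covering of an arc in $\cW^u_{\wt{f}_c}$ then gives existence of intersections; uniqueness comes from a covering-space argument using contractibility of $\cW^u_{\wt{f}_c}$. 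Only after this GPS is established does the paper deduce $u$-properness of $g$, by bounding the function $\varphi(\wt{x})=\inf\{R: (H_g)^{-1}(D^u(H_g(\wt{x}),1))\cap\wt{\cW}^u_g(\wt{x})\subset\wt{\cW}^u_g(\wt{x},R)\}$ pointwise via the homeomorphism $\Pi^u_{H_g(\wt{x})}\circ H_{g_k}|_{\wt{\cW}^u_g(\wt{x})}$ and then using $\Gamma$-periodicity and upper semicontinuity. The point is that the hard work---getting uniform control of how $\wt{\cW}^u_g$-leaves interact with the semiconjugacies---cannot be shortcut by invoking openness; it requires the local box argument.
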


\begin{proof}
	Take a sequence $\{g_k\}\subset \text{PH}_f(M)$ with $g_k\to g$ in the $C^{1}$ topology, such that for every $k\in \N$, $g_k$ is $\sigma$-proper and SADC with global product structure. By Proposition \ref{tsadcc} we know that $g$ is SADC. We have to prove that $g$ is $\sigma$-proper and that we have global product structure. We are going to prove case $\sigma=u$, but the case $\sigma=s$ is completely symmetric.  
	
	Note that every $g_k$ is in the hypothesis of Theorem \ref{tcdi}, then for every $k\in \N$ there is a $g_k$-invariant foliation $\cW^{cs}_{g_k}$ tangent to $E^{s}_{g_k}\oplus E^c_{g_k}$ such that:
	\begin{equation} \label{equ1}
		\wt{\cW}^{cs}_{g_k}(\wt{x})=(H_{g_k})^{-1}(\cW^s_{\wt{f}_c}(H_{g_k}(\wt{x})))
			\end{equation} 
	Notice also that by center-fibered we have that: 
	\begin{equation} \label{ecscf}
		H_{g_k}(\wt{x})=H_{g_k}(\wt{y}) \ \ \textnormal{if and only if} \ \ \wt{y}\in \wt{\cW}^c_{g_k}(\wt{x})
	\end{equation}
	\begin{claim}
		Given $\e>0$, there exists $\delta>0$, a cone field $\cC^{u}$ around $E^{u}_g$ and  $k_0$ such that if $k \geq k_0$ and $\wt{D}$ is a disk tangent to $\cC^{u}$ of internal radius larger than $\e$ and centered at $\wt{x}$, then
		$$D^{u}(H_{g_k}(\wt{x}),\delta)\subset \Pi^{u}_{H_{g_k}(\wt{x})}\circ H_{g_k}(\wt{D})
		$$ 
	\end{claim}
	\begin{proof}
		This is because $g_k \to g$ in the $C^{1}$ topology, and so $E^{\sigma}_k\to E^{\sigma}_g$ for every $\sigma$. Then $M$ has a finite cover of local product structure boxes $B$ of size smaller than $\e$ such that for $k \geq k_0$ large enough, these are local product structure boxes for $g_k$ too. We can take these boxes $B$ small enough to have:
		\begin{itemize}
			\item The boxes $2B$ and $3B$ are also local product structure boxes for $g_k$.
			\item For every $B$ of the covering and every disk $D\subset M$ tangent to $\cC^{u}$ of internal radius larger than $\e$ and centered at a point $x\in B$ we have that $D$ intersects in a unique point in $3B$ every center-stable plaque of $\cW^{cs}_{g_k}$ which intersects $2B$.
		\end{itemize} 
		\vspace{-0.5cm}
		\begin{figure}[H] 
			\begin{center}
				\includegraphics [width=16cm]{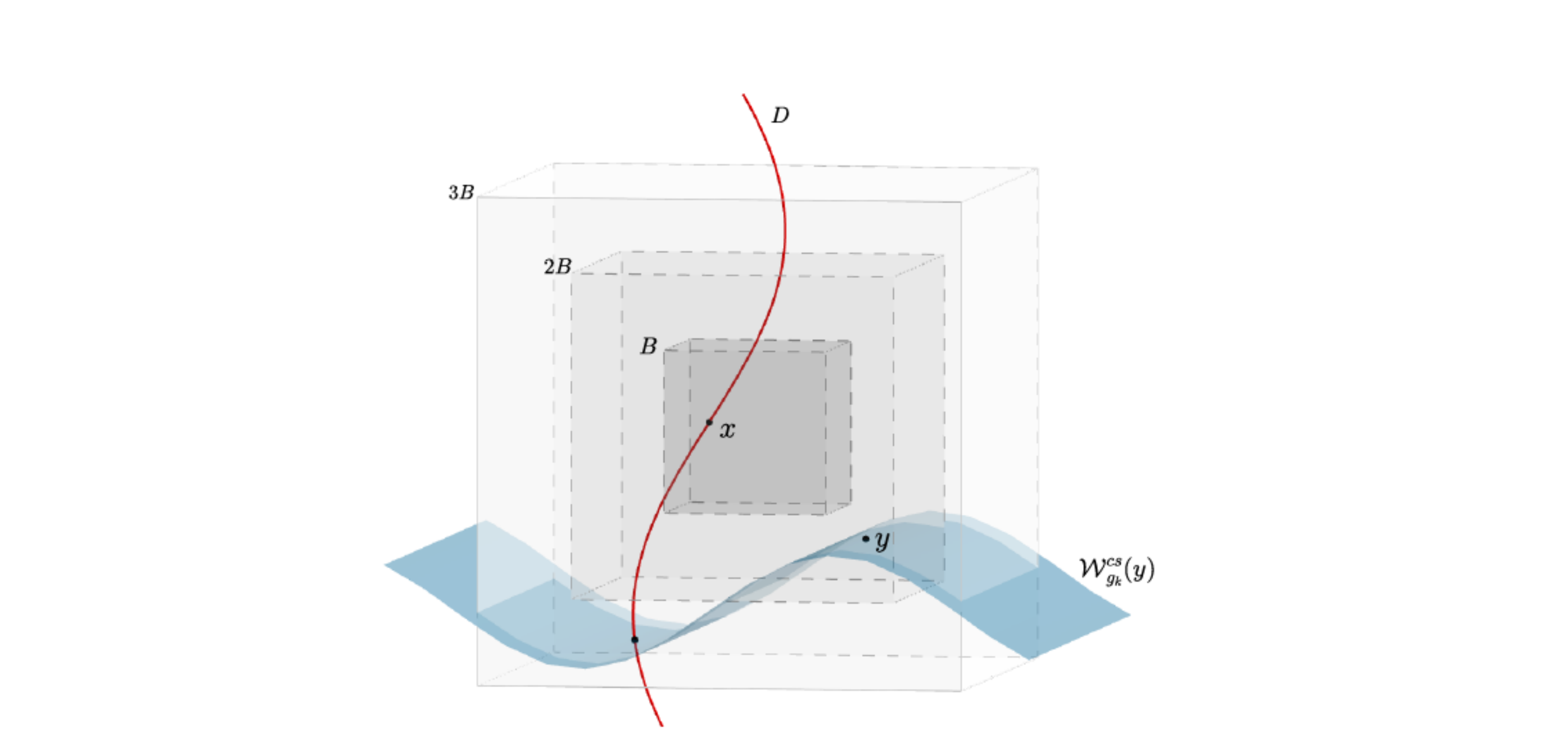}
				\caption{Boxes with local product structure.} 
			\end{center}
		\end{figure}	
		We can lift this cover by boxes and obtain a cover of $\wt{M}$ with the same properties as above. 
		The previous condition plus Equation \eqref{ecscf} implies that: $$\Pi^{u}_{H_{g_k}(\wt{x})}\circ H_{g_k}(2\wt{B})\subset \Pi^{u}_{H_{g_k}(\wt{x})}\circ H_{g_k}(\wt{D})$$ where $\wt{D}\subset \wt{M}$ is a lift of a disk $D\subset M$ as above. 
		Using the injectivity of $H_{g_k}$ restricted to $\wt{\cW}^{u}_{g_k}$ leaves, we have that given a connected component $2\wt{B}$ of a lift we have $int(\Pi^{u}_{H_{g_k}(\wt{x})}\circ H_{g_k}(2\wt{B}))\neq \emptyset
		$ and every point $\wt{y} \in \wt{B}$ verifies that $\Pi^u_{H_{g_k}(\wt{x})}(H_{g_k}(\wt{y}))$ lies in the interior of $\Pi^u_{H_{g_k}(\wt{x})}\circ H_{g_k}(2\wt{B})$. Since there are finite boxes (in $M$), there is a uniform $\delta>0$ such that $\Pi^u_{H_{g_k}(\wt{x})}(H_{g_k}(\wt{B}))$ is at bounded $\delta$ distance from the boundary of $\Pi^u_{H_{g_k}(\wt{x})}\circ H_{g_k}(2\wt{B})$ independently of the box $\wt{B}$.
		We deduce that every disk $\wt{D}$ of internal radius $\e$ and centered at $\wt{x}$ and tangent to a small cone around $E^{u}_g$ verifies that $\Pi^u_{H_{g_k}(\wt{x})}\circ H_{g_k}(\wt{D})$ contains $D^u(H_{g_k}(\wt{x}),\delta)$ as desired.  
	\end{proof}
	
	\begin{claim}
		For $k$ sufficiently large enough and for every pair of points $\wt{x},\wt{y} \in \wt{M}$, we have that $\wt{\cW}^{u}_g(\wt{x})$ and $\wt{\cW}^{cs}_{g_k}(\wt{y})$ have non-trivial intersection. 
	\end{claim}
	
	\begin{proof}
		Given two points $\wt{x},\wt{y} \in \wt{M}$, take an arc $\cS$ in $\cW^u_{\wt{f}_c}(H_{g_k}(\wt{x}))$ joining
		$H_{g_k}(\wt{x})$ and $\Pi^u_{H_{g_k}(\wt{x})}(H_{g_k}(\wt{y}))$. Fix $\e>0$ and take the corresponding $\delta>0$, the cones $\cC^{u}$ and $k_0>0$ from the previous claim. Then we have that 
		$$\Pi^u_{H_{g_k}(\wt{x})}\circ H_{g_k}(\wt{\cW}^{u}_g(\wt{x},\e)) \supset D^u(H_{g_k}(\wt{x}),\delta)$$ In the same way we get that
		$$\Pi^u_{H_{g_k}(\wt{x})}\circ H_{g_k}(\wt{\cW}^{u}_g(\wt{x},2\e)) \supset D^u(H_{g_k}(\wt{x}),2\delta)$$ We can apply inductively the same argument, and since the arc $\cS$ is compact, we get $m\in \N$ such that 
		$$\Pi^u_{H_{g_k}(\wt{x})}\circ H_{g_k}(\wt{\cW}^{u}_g(\wt{x},m\e)) \supset D^u(H_{g_k}(\wt{x}),m\delta) \supset \cS$$
		Then there is a point $\wt{z}\in \wt{\cW}^{u}_g(\wt{x},m\e) \subset \wt{\cW}^{u}_g(\wt{x})$ such that $$\Pi^u_{H_{g_k}(\wt{x})}\circ H_{g_k}(\wt{z})=\Pi^u_{H_{g_k}(\wt{x})}\circ H_{g_k}(\wt{y})$$
		Then $H_{g_k}(\wt{z})\in\cW^s_{\wt{f}_c}(H_{g_k}(\wt{y}))$ and this implies by Equation \eqref{equ1} that $\wt{z}\in \wt{\cW}^{cs}_{g_k}(\wt{y})$. We conclude that $\wt{z}\in \wt{\cW}^{u}_g(\wt{x})\cap \wt{\cW}^{cs}_{g_k}(\wt{y})$ as desire.
		
			\end{proof}
	
	\begin{claim} 
		For $k$ sufficiently large, the foliations $\wt{\cW}^{u}_g$ and $\wt{\cW}^{cs}_{g_k}$ have global product structure. In particular, the map $\Pi^u_{H_{g}(\wt{x})}\circ H_{g_k}|_{\wt{\cW}^{u}_g(\wt{x})}:\wt{\cW}^{u}_g(\wt{x}) \to \cW^u_{\wt{f}_c}(H_{g}(\wt{x}))$ is a homeomorphism. 
	\end{claim}  	
		\begin{proof}	
		By the previous claim, we only have to prove that the intersection between $\wt{\cW}^{u}_g(\wt{x})$ and $\wt{\cW}^{cs}_k(\wt{y})$ is unique for every pair of points $\wt{x},\wt{y} \in \wt{M}$. Since the leaf $\wt{\cW}^{u}_g(\wt{x})$ intersects transversely  $\wt{\cW}^{cs}_{g_k}(\wt{y})$ for every $\wt{x},\wt{y}$ and $H_{g_k}(\wt{\cW}^{cs}_{g_k}(\wt{y}))=\cW^s_{\wt{f}_c}(H_{g_k}(\wt{y}))$ we have that $H_{g_k}(\wt{\cW}^{u}_g(\wt{x}))$ is topologically transverse to the stable foliation $\cW^s_{\wt{f}_c}$. This implies that  
		$$\Pi^u_{H_{g}(\wt{x})}\circ H_{g_k}:\wt{\cW}^{u}_g(\wt{x})\to \cW^u_{\wt{f}_c}(H_{g}(\wt{x}))
		$$
		is locally injective, hence a covering and, since $\cW^u_{\wt{f}_c}(H_{g}(\wt{x}))$ is contractible, it must be injective. This proves that $\Pi^u_{H_{g}(\wt{x})}\circ H_{g_k}$ restricted to $\wt{\cW}^{u}_g(\wt{x})$ is a homeomorphism onto $\cW^u_{\wt{f}_c}(H_{g}(\wt{x}))$. 		
		
	\end{proof} 
	This claim proves that $g$ is SADC with global product structure. To finish the proof of the theorem we must prove there is $R>0$ such that:
	$$ (H_g)^{-1}(D^u(H_g(\wt{x}),1))\cap \wt{\cW}^{u}_g(\wt{x})\subset \wt{\cW}^{u}_g(\wt{x},R) \ , \ \ \forall \wt{x}\in \wt{M}
	$$
	Fix $\wt{x}\in \wt{M}$. We know that $d_{C^{0}}(H_{g_k},H_g)<K_*$ for some constant $K_*>0$. Let $C_*$ be the corresponding constant as in Item \ref{p2properties} of Proposition \ref{p.propertiesc}. The previous claim says that the restriction of $\Pi^u_{H_{g}(\wt{x})}\circ H_{g_k}$ to $\wt{\cW}^{u}_g(\wt{x})$ is a homeomorphism onto $\cW^u_{\wt{f}_c}(H_{g}(\wt{x}))$. Then, we know there is $R_1=R_1(x)>0$ such that 
		$$\Pi^u_{H_{g}(\wt{x})}\circ H_{g_k}(\wt{\cW}^{u}_g(\wt{x},R_1)^c)\cap D^u(\Pi^u_{H_g(\wt{x})}(H_{g_k}(\wt{x})),1+2C_*)=\emptyset
	$$
	Take $\wt{y} \in \wt{\cW}^{u}_g(\wt{x},R_1)^c$. Then applying the triangular inequality  we obtain  
	\begin{eqnarray*}
		1+2C_* & < & dist(\Pi^u_{H_{g}(\wt{x})}( H_{g_k}(\wt{x})),\Pi^u_{H_{g}(\wt{x})}(H_{g_k}(\wt{y}))) \\
		& \leq & dist(\Pi^u_{H_{g}(\wt{x})}(H_{g_k}(\wt{x})),\Pi^u_{H_{g}(\wt{x})}(H_g(\wt{x}))) \\
		&+&dist(\Pi^u_{H_{g}(\wt{x})}(H_g(\wt{x})),\Pi^u_{H_{g}(\wt{x})}(H_g(\wt{y}))) \\
		&+&dist(\Pi^u_{H_{g}(\wt{x})}(H_g(\wt{y})),\Pi^u_{H_{g}(\wt{x})}(H_{g_k}(\wt{y})))\\
		& < &  C_* +dist(H_g(\wt{x}),H_g(\wt{y}))+ C_*
	\end{eqnarray*}
\begin{figure}[H]
	\begin{center}
		\includegraphics [width=13cm]{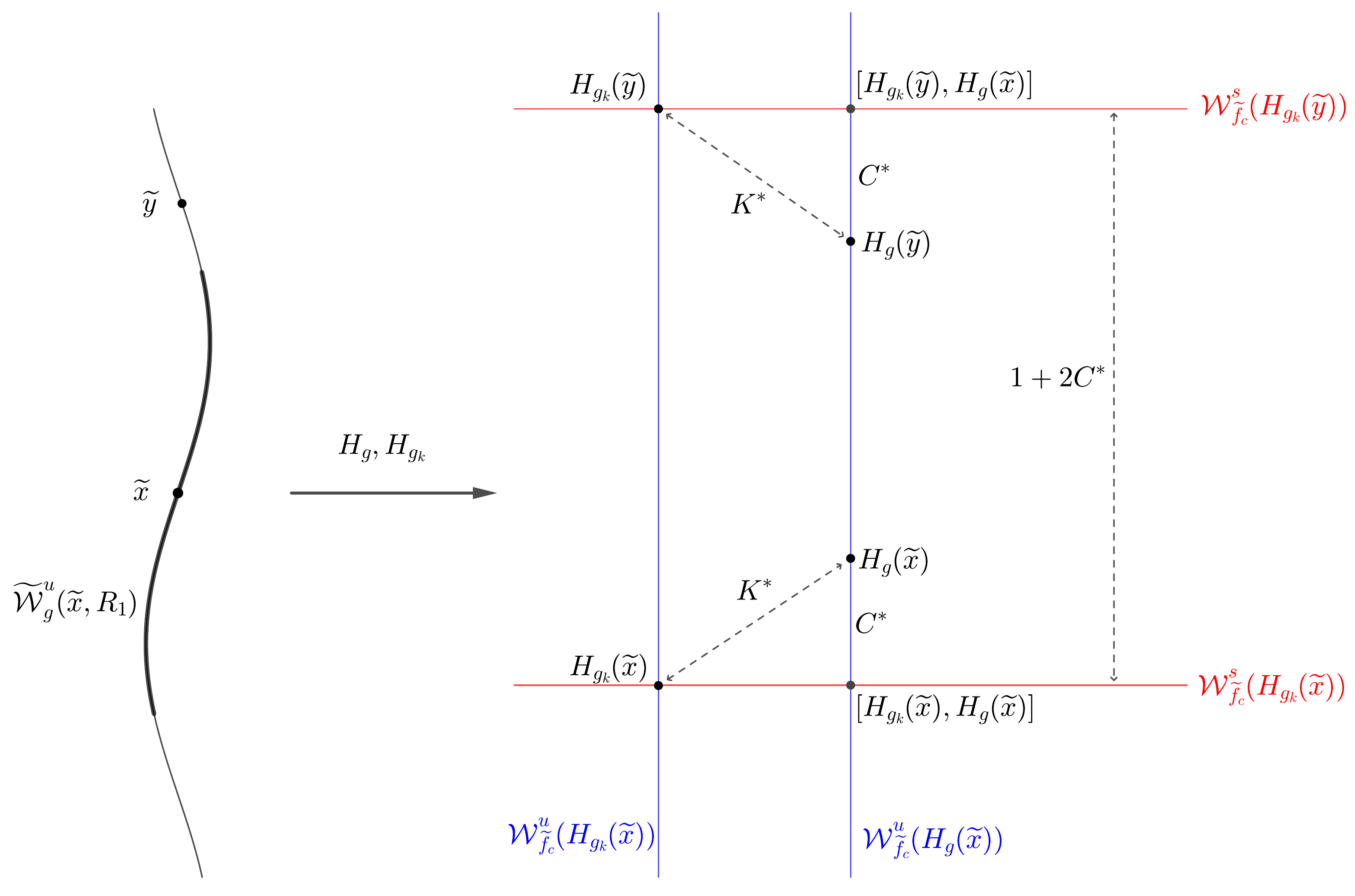}
		\caption{Triangular inequality} \label{figt}
	\end{center}
\end{figure}
where we have used that 
	\begin{itemize}
	
	 \item $\Pi^u_{H_{g}(\wt{x})}(H_g(\wt{x}))=H_g(\wt{x})$
	 \item $\Pi^u_{H_{g}(\wt{x})}(H_g(\wt{y}))=H_g(\wt{y})$ since $H_g(\wt{\cW}^{u}_g(\wt{x}))\subset \cW^u_{\wt{f}_c}(H_g(\wt{x}))$ 
	 \item  $\Pi^u_{H_g(\wt{x})}(H_{g_k}(\wt{x}))=[H_{g_k}(\wt{x}),H_g (\wt{x})]$ 
	\item $\Pi^u_{H_g(\wt{x})}(H_{g_k}(\wt{y}))=[H_{g_k}(\wt{y}),H_g (\wt{y})]$
	\end{itemize}
Thus $dist(H_g(\wt{x}),H_g(\wt{y})) >1$ (see Figure \ref{figt} above) and therefore we get 
	$$H_g(\wt{\cW}^{u}_g(\wt{x},R_1)^c)\cap D^u(H_{g}(\wt{x}),1)=\emptyset
	$$
	which is the same that
	$$(H_g)^{-1}(D^u(H_g(\wt{x}),1))\cap \wt{\cW}^{u}_g(\wt{x})\subset \wt{\cW}^{u}_g(\wt{x},R_1)
	$$
	Then we have proved that the function $\varphi$ is well defined where $$\varphi(x)=\inf \{ R>0: (H_g)^{-1}(D^u(H_g(\wt{x}),1))\cap \wt{\cW}^{u}_g(\wt{x})\subset \wt{\cW}^{u}_g(\wt{x},R)\}$$ By Remark \ref{osp1} we have to prove that $\varphi$ is uniformly bounded in $\wt{M}$ for getting $u$-proper. Since $\varphi$ is $\Gamma$-periodic (because $H_g$ is $\Gamma$-periodic), it's enough to restrict ourselves to points in a fundamental domain which is compact. Thus it is enough to show that if $\wt{x_n}\to \wt{x}$ then $\varphi(\wt{x_n})\leq \varphi (\wt{x})$. 
	To prove this, note that $H_g(\wt{\cW}^{u}_g(\wt{x},\varphi(\wt{x})))$ contains $D^u(H_g(\wt{x}),1)$. Now for every $\e>0$ we can find $\delta>0$ such that
	$$D^u(H_g(\wt{x}),1+\delta)\subset H_g(\wt{\cW}^{u}_g(\wt{x},\varphi(\wt{x})+\e))
	$$
	By continuous variation of the $\wt{\cW}^{u}_g$-leaves and since $H_g$ is continuous, we deduce that for $n$ large enough $H_g(\wt{\cW}^{u}_g(\wt{x_n},\varphi(\wt{x})+\e))$ contains $D^u(H_g(\wt{x_n}),1)$. This shows that $\limsup \varphi(\wt{x_n})\leq \varphi(\wt{x})+\e$. Since the choice of $\e>0$ was arbitrary, we get the desire result. 
\end{proof}

\subsection{Proof of the Theorem \ref{teoA} (Theorem \ref{a})} \label{subsectionproof}

Let $g\in \text{PH}_f(M)$ be a diffeomorphism in the same connected component of a partially hyperbolic diffeomorphism $g'$ such that:
\begin{itemize}
	\item $g'$ is dynamically coherent.
	\item $g'$ is center fibered.
	\item $\wt{\cW}^{cs}_{g'}$ and $\wt{\cW}^{u}_{g'}$ have GPS and, $\wt{\cW}^{cu}_{g'}$ and $\wt{\cW}^{s}_{g'}$ have GPS.
\end{itemize}
Then by Proposition \ref{propequivfinal} we have that $g'$ is $\sigma$ proper and SADC (and has GPS).  

Propositions \ref{tsadca}, \ref{pspopen}, \ref{gpsopen}, \ref{tsadcc} and Theorem \ref{tsadcypayc} tell us that $\sigma$-proper, SADC and global product structure are open and closed properties in the $C^1$ topology among $\text{PH}_f(M)$. In particular this implies that $g$ is $\sigma$-proper, SADC and has global product structure. By Theorem \ref{tcdi} (and Corollary \ref{ccdi}) we get that $g$ is dynamically coherent, center fibered and has global product structure. This ends the proof.

\section{Leaf conjugacy and proof of Theorem \ref{teoB} (Theorem \ref{b})} \label{sectionleafc}

In this section we are going to prove Theorem \ref{teoB} (Theorem \ref{b} in the introduction). For the proof we're going to show that center-fibered implies plaque expansiveness. Then we can conclude by Theorem \ref{thmplaqueexpansivetostablydc0} and a connectedness argument.

\begin{prop} \label{fispe}
	Every $g\in \textnormal{PH}_f^0(M)$ is plaque expansive.
	\begin{proof}
		Take $g\in \text{PH}_f^0(M)$. We know from Theorem \ref{teoA} that $g$ is dynamically coherent and center fibered. 
		Now take $\e>0$ and two $\e$-pseudo orbits $\{x_n\}_{n\in \Z}$ and $\{y_n\}_{n\in \Z}$ such that:
		\begin{itemize}
			\item[(i)] $g(x_n)$ belongs to the plaque $\cW^c_g(x_{n+1}, r)$, for every $n\in \Z$. 
			\item[(ii)] $g(y_n)$ belongs to the plaque  $\cW^c_g(y_{n+1},r)$, for every $n\in \Z$.
			\item[(iii)] $d(x_n,y_n)<\e$, for every $n\in \Z$.
		\end{itemize} Then we have to prove that $x_0$ and $y_0$ belong to the same center leaf. To do so, first take two lifts $\wt{x_0}$ and $\wt{y_0}$ of $x_0$ and $y_0$ respectively such that $d(\wt{x_0},\wt{y_0})<\e$. Since $\e$ is small enough, we have a unique pair of sequences $\{\wt{x_n}\}_{n\in \Z}$ and $\{\wt{y_n}\}_{n\in \Z}$ that check Items (i),(ii) and (iii).   
		
		Notice that center fibered imply that $H_g(\wt{g}(\wt{x_n}))=H_g(\wt{x_{n+1}})$ and $H_g(\wt{g}(\wt{y_n}))=H_g(\wt{y_{n+1}})$. By semiconjugacy we get
		$$\wt{f}_c(H_g(\wt{x_n}))=H_g(\wt{g}(\wt{x_n}))=H_g(\wt{x_{n+1}})$$ 
		$$\wt{f}_c(H_g(\wt{y_n}))=H_g(\wt{g}(\wt{y_n}))=H_g(\wt{y_{n+1}})$$
		Then $\{H_g(\wt{x_n})\}_{n\in \Z}$ and $\{H_g(\wt{y_n})\}_{n\in \Z}$ are orbits of the map $\wt{f}_c:\wt{M}_c\to \wt{M}_c$. These two orbits remains at a bounded distance and $\wt{f}_c$ is infinitely expansive.
		
				Therefore $H_g(\wt{x_0})=H_g(\wt{y_0})$. By center-fibered we conclude that $\wt{y_0}\in \wt{\cW}^c_g(\wt{x_0})$ and since its distance is less that $\e$  we have $y_0\in \cW^c_g(x_0,r)$ proving that $g$ is plaque-expansive.	
	\end{proof}	
\end{prop}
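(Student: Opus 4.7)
The plan is to reduce plaque expansiveness of $g$ to the infinite expansivity of $\wt{f}_c$ obtained in Proposition \ref{p.propertiesc}\eqref{p4properties}, using the fact that $g$ is dynamically coherent and center-fibered (guaranteed by Theorem \ref{teoA}) together with the semiconjugacy $\wt{f}_c\circ H_g=H_g\circ \wt{g}$ from Theorem \ref{thmsta}.

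First, I would fix $\varepsilon>0$ smaller than half of the injectivity radius of $M$ (and smaller than the size of the center plaques) and take two $\varepsilon$-pseudo orbits $\{x_n\}, \{y_n\}$ respecting the center plaquation $\mathcal{P}$ of $\cWc_g$, with $d(x_n,y_n)<\varepsilon$ for all $n$. I would choose a lift $\wt{x}_0$ of $x_0$ and then the unique lift $\wt{y}_0$ of $y_0$ with $d(\wt{x}_0,\wt{y}_0)<\varepsilon$. Using that $\varepsilon$ is smaller than the injectivity radius, I would inductively define lifts $\wt{x}_n, \wt{y}_n$ so that $\wt{g}(\wt{x}_n)$ lies in the lifted plaque $\wt{\cWc_g}(\wt{x}_{n+1},r)$, $\wt{g}(\wt{y}_n)$ lies in $\wt{\cWc_g}(\wt{y}_{n+1},r)$, and $d(\wt{x}_n,\wt{y}_n)<\varepsilon$ for every $n\in\Z$.

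Second, I would use center-fiberedness to identify these sequences with true orbits in the quotient. Since $\wt{g}(\wt{x}_n)$ and $\wt{x}_{n+1}$ lie in a common center leaf, $H_g(\wt{g}(\wt{x}_n))=H_g(\wt{x}_{n+1})$, and by the semiconjugacy relation $\wt{f}_c(H_g(\wt{x}_n))=H_g(\wt{x}_{n+1})$; the same holds for $\{\wt{y}_n\}$. Hence $\{H_g(\wt{x}_n)\}$ and $\{H_g(\wt{y}_n)\}$ are genuine $\wt{f}_c$-orbits. Moreover, since $d(\wt{x}_n,\wt{y}_n)<\varepsilon$ for all $n$, Proposition \ref{p.propertiesc}\eqref{p1properties} gives a constant $C>0$ with $\dist(H_g(\wt{x}_n),H_g(\wt{y}_n))\le C$ for every $n$.

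Third, I would invoke Proposition \ref{p.propertiesc}\eqref{p4properties}: the map $\wt{f}_c$ is infinitely expansive, so the two $\wt{f}_c$-orbits remain at bounded distance only if they coincide, giving $H_g(\wt{x}_0)=H_g(\wt{y}_0)$. Center-fiberedness then yields $\wt{y}_0\in \wt{\cWc_g}(\wt{x}_0)$, and since $d(\wt{x}_0,\wt{y}_0)<\varepsilon$, the two points lie in a common plaque of $\mathcal{P}$, as required. The argument for $n\neq 0$ is identical after shifting indices.

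The main (minor) obstacle is the careful bookkeeping of lifts in the first step: one must ensure that the inductive choice of $\wt{x}_{n+1}$ (the lift of $x_{n+1}$ with $\wt{g}(\wt{x}_n)$ in its plaque) is compatible with the inductive choice of $\wt{y}_{n+1}$, so that $d(\wt{x}_n,\wt{y}_n)<\varepsilon$ is preserved for all $n$. Once $\varepsilon$ is chosen small relative to the injectivity radius and to the plaque size, this is automatic, and the rest of the proof is a direct consequence of the infinite expansivity of $\wt{f}_c$ established in the preliminary Proposition \ref{p.propertiesc}.
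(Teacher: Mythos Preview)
Your proposal is correct and follows essentially the same route as the paper: lift the pseudo-orbits, use center-fiberedness plus the semiconjugacy $\wt{f}_c\circ H_g=H_g\circ\wt{g}$ to turn them into genuine $\wt{f}_c$-orbits at bounded distance, and conclude via infinite expansivity of $\wt{f}_c$. One small remark: Proposition~\ref{p.propertiesc}\eqref{p1properties} bounds $dist(\wt{p}(\wt{x}_n),\wt{p}(\wt{y}_n))$, not $dist(H_g(\wt{x}_n),H_g(\wt{y}_n))$; to get the latter you should also invoke $d_{C^0}(H_g,\wt{p})<\alpha$ from Theorem~\ref{thmsta}\eqref{point2stability}, which adds a harmless $2\alpha$ to your bound.
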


\begin{proof}[Proof of Theorem \ref{teoB}]
	Take $g_0$ and $g_1$ diffeomorphisms in the same connected component of $\text{PH}_f^0(M)$. Then there is a continuous path $\{g_t\}_{t\in [0,1]} \subset \text{PH}_f^0(M)$ connecting $g_0$ and $g_1$. 
	
	By Theorem \ref{teoA} every $g_{t}$ is dynamically coherent and center fibered. Then by Proposition \ref{fispe} every $g_t$ is plaque expansive. We can apply Theorem \ref{thmplaqueexpansivetostablydc0} (Theorem 7.1 in \cite{HPS}) to every $g_t$ and obtain a neighbourhood $\cU(t)$ such that every partially hyperbolic in $\cU(t)$ is leaf conjugate to $g_t$. Since $[0,1]$ is compact and connected, we can cover $\{g_t\}_{t\in [0,1]}$ by a finite union $\cup_{i=1}^{i=l}\cU(t_i)$. Since leaf-conjugacy is an equivalence relation we conclude that $g_0$ is leaf conjugate to $g_1$. 
\end{proof}

\small
	\vspace{0.5cm}
\begin{flushleft}
	\textsc{Luis Pedro Pi\~neyr\'ua}\\
	IMERL, Facultad de Ingenier\'ia\\
	Universidad de la Rep\'ublica, Montevideo, Uruguay\\
	email: \texttt{lpineyrua@fing.edu.uy} 
	
	\vspace{0.4cm}
	
	\textsc{Mart\'in Sambarino}\\
	CMAT, Facultad de Ciencias\\
	Universidad de la Rep\'ublica, Montevideo, Uruguay\\
	email: \texttt{samba@cmat.edu.uy} 
\end{flushleft}

\end{document}